\documentclass{amsart}

\usepackage{graphicx}
\usepackage{a4wide}
\usepackage{psfrag}
\usepackage[all]{xy}

\let\cal\mathcal
\def\AA{{\cal A}}

\def\CC{{\cal C}}
\def\DD{{\cal D}}
\def\EE{{\cal E}}
\def\FF{{\cal F}}

\def\HH{{\cal H}}

\def\KK{{\cal K}}
\def\LL{{\cal L}}

\def\OO{{\cal O}}

\def\QQ{{\cal Q}}

\def\TT{{\cal T}}
\def\UU{{\cal U}}

\let\blb\mathbb

\def\bX{{\blb X}}

\def\bQ{{\blb Q}}

\def\bP{{\blb P}}
\def\bS{{\blb S}}

\def\bN{{\blb N}}
\def\bR{{\blb R}}

\def\bZ{{\blb Z}}

\let\frak\mathfrak

\def\aa{\frak{a}} 
 
\def\fK{\frak{K}} 

\def\modulo\mod

\def\soc{\operatorname{soc}}

\def\Mod{\operatorname{Mod}}

\def\mod{\operatorname{mod}}

\def\nilp{\operatorname{nilp}}

\def\qgr{\operatorname{qgr}}

\def\coh{\mathop{\text{\upshape{coh}}}}

\def\rad{\operatorname {rad}}

\def\rep{\operatorname{rep}}

\def\Ext{\operatorname {Ext}}

\def\Hom{\operatorname {Hom}}
\def\Homb{\operatorname {Hom}^{\bullet}}
\def\uhom{\operatorname {\mathit{hom}}}

\def\End{\operatorname {End}}
\def\RHom{\operatorname {RHom}}

\def\im{\operatorname {im}}

\def\cone{\operatorname {cone}}
\def\coker{\operatorname {coker}}
\def\ker{\operatorname {ker}}

\def\End{\operatorname {End}}

\def\rk{\operatorname {rk}}

\def\infrad{\operatorname {rad}^{\infty}}

\def\CYdim{\operatorname {CY dim}}

\def\ulin{\mathit{lin}}

\DeclareMathOperator{\Ind}{Ind}

\DeclareMathOperator{\ind}{ind}

\DeclareMathOperator{\cohproj}{cohproj}

\newcommand\C{C}

\newcommand\Kp{K^{+}}

\newcommand\D{D}
\newcommand\Db{{D^b}}

\newcommand\Hi{\operatorname{H^i}}

\renewcommand\t{\tau}

\newcommand\achi{\overline{\chi}}

\newtheorem{lemma}{Lemma}[section]
\newtheorem{proposition}[lemma]{Proposition}
\newtheorem{theorem}[lemma]{Theorem}
\newtheorem{corollary}[lemma]{Corollary}

\theoremstyle{definition}

\newtheorem{example}[lemma]{Example}
\newtheorem{definition}[lemma]{Definition}

{

}

\theoremstyle{remark}

\newtheorem{remark}[lemma]{Remark}

\newdimen\uboxsep \uboxsep=1ex
\def\uboxn#1{\vtop to 0pt{\hrule height 0pt depth 0pt\vskip\uboxsep
\hbox to 0pt{\hss #1\hss}\vss}}

\def\uboxs#1{\vbox to 0pt{\vss\hbox to 0pt{\hss #1\hss}
\vskip\uboxsep\hrule height 0pt depth 0pt}}

\def\Ob{\operatorname{Ob}}

\newcommand\exa{\nopagebreak \begin{center}\smallskip \nopagebreak               \begin{minipage}[t]{6cm}\sloppy}
\newcommand\exb{\end{minipage}\kern 1cm\begin{minipage}[t]{8cm}\sloppy}
\newcommand\exc{\end{minipage}\kern -3cm \smallskip\end{center}}

\title{Abelian hereditary fractionally Calabi-Yau categories}
\author{Adam-Christiaan van Roosmalen}
\address{Adam-Christiaan van Roosmalen\\Mathematisches Institut\\Universit\"{a}t Bonn
\\Endenicher Allee 60\\53111 Bonn\\Germany}\email{vroosmal@math.uni-bonn.de}
\begin{document}

\bibliographystyle{amsplain}

\begin{abstract}
As a generalization of a Calabi-Yau category, we will say a $k$-linear Hom-finite triangulated category is fractionally Calabi-Yau if it admits a Serre functor $\bS$ and there is an $n > 0$ with $\bS^n \cong [m]$.  An abelian category will be called fractionally Calabi-Yau is its bounded derived category is.  We provide a classification up to derived equivalence of abelian hereditary fractionally Calabi-Yau categories (for algebraically closed $k$).  They are:
\begin{itemize}
\item the category of finite dimensional representations of a Dynkin quiver, 
\item the category of finite dimensional nilpotent representations of a cycle,
\item the category of coherent sheaves on an elliptic curve or a weighted projective line of tubular type.
\end{itemize}
To obtain this classification, we introduce generalized 1-spherical objects and use them to obtain results about tubes in hereditary categories (which are not necessarily Calabi-Yau).
\end{abstract}

\maketitle

\tableofcontents

\section{Introduction}

As a generalization of Calabi-Yau categories, one can define fractionally Calabi-Yau categories as triangulated $k$-linear categories with a Serre functor $\bS$ where $\bS^n \cong [m]$ for some $n > 0$ (as triangle functors).  An abelian category $\AA$ is fractionally Calabi-Yau if its bounded derived category is fractionally Calabi-Yau.

The main result of this paper is the following classification result (Theorem \ref{theorem:FractionallyCY} in the text).  We will assume $\AA$ is $k$-linear over an algebraically closed field $k$.

\begin{theorem}\label{theorem:Introduction}
Let $\AA$ be an indecomposable abelian hereditary category which is fractionally Calabi-Yau, then $\AA$ is derived equivalent to either
\begin{enumerate}
  \item the category of finite dimensional representations $\rep Q$ over a Dynkin quiver $Q$, or
  \item the category of finite dimensional nilpotent representations $\nilp \tilde{A}_n$ where $\tilde{A}_n$ ($n \geq 0$) has cyclic orientation, or
  \item the category of coherent sheaves $\coh \bX$ where $\bX$ is either an elliptic curve or a weighted projective line of tubular type.
\end{enumerate}
\end{theorem}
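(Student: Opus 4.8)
The plan is to combine the structure theory of Hom-finite hereditary abelian categories with Serre duality, the numerical constraint forced by the identity $\bS^n \cong [m]$, and the description of tubes obtained in this paper through generalized $1$-spherical objects.

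First I would dispatch the easy direction, namely that each of the three families is (fractionally) Calabi-Yau. For a Dynkin quiver $Q$ this is classical: the Coxeter transformation on $K_0(\rep Q)$ has finite order and in fact $\bS^a \cong [b]$ for suitable $a, b > 0$. For $\nilp\tilde A_n$ with cyclic orientation and rank $p = n+1$, the Auslander--Reiten translate $\t$ is induced by the order-$p$ rotation of the cyclic quiver, whence $\t^p \cong \id$ and $\bS^p = (\t[1])^p \cong [p]$. For $\bX$ an elliptic curve one has $\bS \cong [1]$; for $\bX$ a weighted projective line of tubular type the dualizing element is torsion in the Picard group, so some tensor power of it is $\OO$ and $\bS^n \cong [n]$. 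Hence the substance of the theorem is the converse.

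For the converse I would start from the trichotomy for an indecomposable Hom-finite hereditary $\AA$ with Serre duality: (a) $\AA$ has a nonzero projective object, so up to derived equivalence $\AA \cong \rep Q$ for a connected quiver $Q$ without oriented cycles; (b) $\AA$ has neither a nonzero projective nor a nonzero injective object and is curve-like, so $\AA \cong \coh\bX$ for a weighted projective line or an exceptional curve $\bX$; (c) $\AA$ has neither a nonzero projective nor a nonzero injective object and is not curve-like, so, via the analysis of its Auslander--Reiten components, it is assembled from tubes and components of shape $\bZ A_\infty$, $\bZ A_\infty^\infty$, or $\bZ D_\infty$. The fractionally Calabi-Yau hypothesis passes to the induced automorphism $\overline{\bS}$ of $K_0(\AA)$, which then satisfies $\overline{\bS}^{\,n} = (-1)^m$, hence has finite order up to sign. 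In case (a), $\overline{\bS}$ is, up to sign, the Coxeter transformation, so finite order forces $Q$ to be Dynkin (a wild $Q$ has spectral radius $>1$, a tame $Q$ produces a nontrivial unipotent Jordan block). In case (b), the degree and rank functionals show that $\overline{\bS}$ has finite order only when the dualizing sheaf is numerically trivial, i.e.\ $\bX$ is an elliptic curve or a tubular weighted projective line, and over an algebraically closed field no further exceptional curves survive. In case (c), the structural results on tubes coming from generalized $1$-spherical objects show that any $\bZ A_\infty$, $\bZ A_\infty^\infty$, or $\bZ D_\infty$ component, or the presence of more than one tube, makes $K_0(\AA)$ infinitely generated in a way incompatible with $\overline{\bS}$ having finite order; hence $\AA$ is a single tube, that is, $\nilp\tilde A_n$ with cyclic orientation. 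This exhausts the list.

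I expect the main obstacle to be case (c): both establishing the classification of hereditary categories with Serre duality in the (non-noetherian) generality needed, and extracting the finiteness obstruction locally, since $K_0(\AA)$ may be infinitely generated and $\overline{\bS}$ need not act through a finite matrix. This is precisely where the generalized $1$-spherical objects are used, giving a handle on tubes sitting inside a hereditary category that is a priori not Calabi-Yau. A secondary point requiring care is that $\bS^n \cong [m]$ is an isomorphism of triangle functors, strictly stronger than its shadow on $K_0$; the positive direction genuinely constructs the functor isomorphism (via $\t^p \cong \id$, resp.\ via a torsion relation in the Picard group), whereas the negative direction needs only the $K_0$-identity, which is implied.
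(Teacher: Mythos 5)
Your paragraph on the ``easy direction'' is not needed: the theorem only asserts the implication from fractionally Calabi--Yau to the list. The substantive problems are in your converse.

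The central gap is the trichotomy (a)/(b)/(c) you take as a starting point. A structure theorem splitting an arbitrary indecomposable Ext-finite hereditary abelian category with Serre duality into ``module categories of quivers'', ``curve-like categories $\coh \bX$'', and ``categories assembled from tubes and $\bZ A_\infty$-type components'' is only available under a noetherianity hypothesis (Reiten--Van den Bergh), which is not among the hypotheses here; you yourself flag ``establishing the classification \ldots in the (non-noetherian) generality needed'' as the main obstacle, but the proof cannot rest on a classification that is not known. The paper is organized precisely to avoid it: it splits on the Calabi--Yau dimension $d=\frac{m}{n}$, which Proposition \ref{proposition:FractionalCYDim} confines to $[0,1]$. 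If $d<1$, then $\t^n\cong[m-n]$ with $m-n<0$ forces every indecomposable to be preprojective, hence directing, and the classification of \emph{directed} hereditary categories gives $\rep Q$ with $Q$ Dynkin. If $d=1$, then $\t^n\cong 1$, so every indecomposable is $\t$-periodic and Theorem \ref{theorem:TubeCriterium} places every Auslander--Reiten component in a generalized standard tube; from there the paper constructs, via twist functors along generalized $1$-spherical objects, a coherent sequence $\EE$, proves $\HH/\HH_0\cong\coh\bP^1$, builds a tilting object, and concludes with Happel's Theorem \ref{theorem:TiltingObject} --- a classification result that, unlike your trichotomy, is valid in the required generality.

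Your case (c) is also wrong in its mechanism. Finite order of $\overline{\bS}$ on $K_0$ is perfectly compatible with $K_0$ being infinitely generated (on a tubular $\coh\bX$, $\t$ permutes infinitely many tubes with period $p$), so ``infinitely generated $K_0$ is incompatible with finite order'' proves nothing. What excludes $\bZ A_\infty$, $\bZ A_\infty^\infty$ and $\bZ D_\infty$ components is the object-level identity $\t^n X\cong X$ --- their $\t$-orbits are infinite --- not its shadow on $K_0$; this is exactly why the paper insists that $\bS^n\cong[m]$ on objects already suffices. More seriously, when $d=1$ you cannot reduce to ``a single tube'': tubular weighted projective lines and elliptic curves have infinitely many tubes, so the multi-tube case is not an obstruction to be excluded but the main case to be analysed. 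Your sketch offers no substitute for the construction that handles it (the choice of $L$ and $S$ in Proposition \ref{proposition:LS}, the sequence $\EE$, the quotient $\HH/\HH_0\cong\coh\bP^1$, and the tilting object), which is where essentially all of the work in the paper lies.
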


The proof of this theorem resembles the proof of the classification of abelian 1-Calabi-Yau categories in \cite{vanRoosmalen08}, but the results we need to complete this classification are valid in greater generality.  We briefly outline some steps.

One of our main tools will be the twist functors $T_E$ and $T_E^*$ introduced in \cite{Seidel01}.  In \S\ref{section:TwistFunctors} we generalize the notion of an $n$-spherical object.  Roughly speaking, if $\AA$ is an Ext-finite abelian category with Serre duality, then a generalized 1-spherical object in $\Db \AA$ is an object $E$ such that $\End(E)$ is semi-simple and $\bS E \cong E[n]$.  We show that the associated twist functors are equivalences.  These twist functors correspond to tubular mutations and shrinking functors in the cases where the latter ones are defined.

Using these twist functors, we will prove some general facts about tubes (Auslander-Reiten components of the form $\bZ A_\infty / \langle \tau^n \rangle$, where $\t$ is the Auslander-Reiten translate) in a hereditary category $\AA$ with Serre duality.  Most notably, every indecomposable object $X$ in $\Db \AA$ with $\t^r X \cong X$ lies in a tube (Theorem \ref{theorem:TubeCriterium}) and tubes are convex (Theorem \ref{theorem:TubeMain}) in the sense that if there is a path $X_0 \to \cdots \to X_n$ in $\Db \AA$ with $X_0, X_n$ lying in the tube, then $X_i$ lies in the tube for all $i$.  Moreover, every tube contains a 1-spherical object.

Fix an abelian hereditary category $\AA$ which is fractionally Calabi-Yau of dimension 1 (as the other dimensions are easily dealt with).  In this case, there are no nonzero projective or injective objects and we have $\tau^n \cong 1$ for some $n >0$, where $\t$ is the Auslander-Reiten translate.  In this case, there will be generalized 1-spherical objects in $\Db \AA$.

We may assume that $\AA$ has at least two tubes and a nonzero map between them, as the other cases are again easy to deal with.  Furthermore, since the 1-Calabi-Yau case has already been classified in \cite{vanRoosmalen08}, we may assume $\tau \not\cong 1$.  This implies there is always an exceptional object (thus an object $L$ with $\Ext(L,L) = 0$).  We will start from such an exceptional object to construct a tilting object in $\Db \AA$ using twist functors as done in \cite{Lenzing07}.

Thus let $L$ be an exceptional object and let $E$ be a generalized 1-spherical object lying in a different tube, but with $\Hom(L,E) \not= 0$.  The Hom- and Ext-spaces between the objects $L$ and $E$ we chose should satisfy properties mimicking those of a line bundle and a generalized (semi-simple) 1-spherical object in a weighted projective line.

In $\Db \AA$ we then find a sequence $\EE = (T_E^i(L))_{i \in \bZ}$ and an associated $t$-structure with heart $\HH$ such that $\HH$ is derived equivalent with $\AA$, and the entire sequence $\EE$ lies in $\HH$.  There, the sequence $\EE$ is a coherent sequence in the sense of \cite{Polishchuk05}.  Furthermore, we will show that this sequence gives a Serre subcategory $\HH_0$ of $\HH$ such that $\HH / \HH_0 \cong \coh \bP^1$.

We can use this last result to obtain information about the subcategory of all objects of finite length in $\HH$ and use this to construct a tilting object in $\HH$.  We then use a well-known result by Happel where all hereditary categories with a tilting object are classified to conclude the proof of Theorem \ref{theorem:Introduction}.

As in \cite{vanRoosmalen08}, it is not hard to describe all hereditary categories which are derived equivalent to the categories in Theorem \ref{theorem:Introduction}.  We will do so in Theorem \ref{theorem:Equivalence} using split torsion theories.

\textbf{Acknowledgments} The author wishes to thank Michel Van den Bergh for meaningful discussions.
\section{Preliminaries}

\subsection{Notations and conventions}

Throughout, let $k$ be an algebraically closed field.  We will assume that all categories are $k$-linear and all algebras are $k$-algebras.

We will fix a Grothendieck universe $\UU$ and assume that all categories are $\UU$-categories i.e. all Hom-sets lie in $\UU$.  A category is called $\UU$-small if the set of objects is a set in $\UU$. 

A Hom-finite additive category $\AA$ is Krull-Schmidt.  A \emph{path} in $\AA$ is a sequence of indecomposable objects $X_1, X_2, \ldots, X_n$ with $\Hom(X_1,X_2) \not= 0$.  We will denote by $\ind \AA$ a set of chosen representatives of isomorphism classes of indecomposable objects.  If $\AA'$ is an additive subcategory of $\AA$, then we will choose $\ind \AA' \subseteq \ind \AA$.  If $\KK$ is an Auslander-Reiten component of $\AA$, then $\ind \KK$ refers to all indecomposables of $\ind \AA$ lying in $\KK$.

A nonzero abelian category $\AA$ will be called \emph{indecomposable} if it is not the coproduct of two nonzero categories.

If $Q$ is a quiver, then we denote by $\rep Q$ the category of finite dimensional representations.  The category $\nilp Q$ will be the full subcategory of (finite dimensional) nilpotent representations.

\subsection{Hereditary categories}
Let $\AA$ be an abelian category.  We will say that $\AA$ is \emph{Ext-finite} if $\dim \Ext^i(X,Y) < \infty$ for all $X,Y \in \Ob \AA$ and all $i \geq 0$.  This implies that $\dim \Hom(X,Y) < \infty$.  We will say that $\AA$ is \emph{hereditary} if and only if $\Ext^2(X,Y) = 0$ for all $X,Y \in \Ob \AA$.

For an abelian category $\AA$, we denote by $\Db \AA$ its bounded derived category.  The suspension or shift of $X$ will be denoted by $X[1]$.  There is a fully faithful functor $\AA \to \Db \AA$ mapping an object $X$ to the complex $X^\bullet$ such that $X^i$ is $X$ for $i=0$, and $0$ otherwise.  We will sometimes write $\AA[0]$ to refer to this full subcategory of $\Db \AA$.

If $\AA$ is a hereditary category, then we have the following description of the objects in the bounded derived category: $X \cong \oplus_{i \in \bZ} H^{i}(X)[-i]$, thus every object is isomorphic to the direct sum of its homologies.

\subsection{Serre duality}
Let $\CC$ be a Hom-finite triangulated $k$-linear category.  A \emph{Serre functor} \cite{BondalKapranov89} on $\CC$ is an auto-equivalence $\bS : \CC \to \CC$ such that for every $X,Y \in \Ob \CC$ there are isomorphisms
$$\Hom(X,Y) \cong \Hom(Y,\bS X)^*$$
natural in $X$ and $Y$, and where $(-)^*$ is the vector-space dual.

We will say $\CC$ has \emph{Serre duality} if $\CC$ admits a Serre functor.  An Ext-finite abelian category $\AA$ is said to satisfy Serre duality when the bounded derived category $\Db \AA$ admits a Serre functor.

It has been shown in \cite{ReVdB02} that $\CC$ has Serre duality if and only if $\CC$ has Auslander-Reiten triangles.  If we denote the Auslander-Reiten shift by $\t$, then $\bS \cong \t[1]$.

\subsection{Paths and split $t$-structures}\label{subsection:Splits}

Let $\AA$ be an abelian Ext-finite category (thus $\Db \AA$ is a Krull-Schmidt category).  A \emph{suspended path} in $\Db \AA$ is a sequence $X_0, X_1, \ldots, X_n$ of indecomposable objects such that for all $i=0, 1, \ldots, n-1$ we have $\Hom(X_i,X_{i+1}) \not= 0$ or $X_{i+1} \cong X_i[1]$.  If $\AA$ is indecomposable and not semi-simple, then it follows from \cite{Ringel05} that there is a suspended path from $X$ to $Y$ in $\Db \AA$ if and only if there is a path from $X$ to $Y$.  More generally \cite[Lemma 5]{Ringel05}, $\AA$ is indecomposable if and only if for every $X,Y \in \ind \Db \AA$ there is an $n \in \bZ$ and a suspended path from $X$ to $Y[n]$.  In this case, $\Db \AA$ is also called a \emph{block}.

We will say a full additive Krull-Schmidt subcategory $\UU$ of $\Db \AA$ is \emph{closed under successors} if and only if the following condition holds: if there is a suspended path from an indecomposable $X \in \Ob \UU$ to an indecomposable $Y \in \Ob\Db \AA$, then $Y \in \Ob \UU$.

We will use the following result from \cite{BergVanRoosmalen10}.

\begin{theorem}\label{theorem:Split}
Let $\AA$ be an indecomposable abelian Ext-finite category and let $\UU$ be full nonzero subcategory which is closed under successors.  If $\UU^\perp$ is nonzero, then there is a bounded $t$-structure on $\Db \AA$ given by $D^{\leq 0} = \UU$ and $\D^{\geq 0} = \UU^\perp[1]$ and the heart of this $t$-structure $\HH$ is hereditary and derived equivalent to $\AA$.
\end{theorem}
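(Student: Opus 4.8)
The plan is to build the $t$-structure directly from the pair $(\UU, \UU^\perp)$ and then verify the axioms, reducing everything to the claim that every indecomposable object of $\Db \AA$ lies in either $\UU$ or $\UU^\perp$ after a suitable shift. First I would set $\D^{\leq 0} = \UU$ and $\D^{\geq 1} = \UU^\perp$. Since $\UU$ is closed under successors, it is automatically closed under the shift $[1]$ (as $X[1]$ is a suspended successor of $X$), so $\D^{\leq 0}[1] \subseteq \D^{\leq 0}$; dually one checks $\D^{\geq 1}[-1] \subseteq \D^{\geq 1}$ using that $\Hom(\UU, \UU^\perp) = 0$ forces $\UU^\perp$ to be closed under $[-1]$. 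The orthogonality axiom $\Hom(\D^{\leq 0}, \D^{\geq 1}) = 0$ is exactly the definition of $\UU^\perp$. So the only real content is the truncation/decomposition axiom: every $X \in \Db\AA$ sits in a triangle $\tau^{\leq 0}X \to X \to \tau^{\geq 1}X \to$ with the outer terms in $\D^{\leq 0}$, $\D^{\geq 1}$ respectively.

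For the decomposition, since $\Db\AA$ is Krull-Schmidt it suffices to treat indecomposable $X$, and since $\AA$ is hereditary $X \cong \bigoplus_i H^i(X)[-i]$, so in fact it suffices to handle an indecomposable object of $\AA$ placed in a single degree; but more efficiently, I would argue directly. The key step is to show $\ind \Db\AA = \ind\UU \sqcup \ind\UU^\perp$. Take an indecomposable $X \notin \ind\UU$. Using the block property from \cite{Ringel05} quoted in \S\ref{subsection:Splits}, there is, for any fixed nonzero $U_0 \in \ind\UU$, a suspended path from $U_0$ to some shift $X[n]$; I want to show $X \in \ind\UU^\perp$, i.e. $\Hom(\UU, X) = 0$. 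Suppose not: some $\Hom(U, X) \neq 0$ with $U \in \ind\UU$. Then $U, X$ is a suspended path (indeed an honest path), so closure under successors gives $X \in \ind\UU$, a contradiction. Hence every indecomposable is in $\UU$ or in $\UU^\perp$. Now for arbitrary $X$, decompose $X \cong X' \oplus X''$ where $X'$ collects the indecomposable summands lying in $\UU$ and $X''$ those lying in $\UU^\perp$ — wait, one must be careful: a summand could in principle lie in $\UU^\perp$ but not satisfy $\D^{\geq 1}$ unconditionally. This is where the hereditary hypothesis enters: an indecomposable summand $Y$ of $X$ with $Y \in \ind\UU^\perp$ may need to be shifted, so instead one decomposes using the homology splitting $X \cong \bigoplus H^i(X)[-i]$ and treats each $H^i(X)[-i]$; each indecomposable summand of $H^i(X)$ is, after the shift $[-i]$, either in $\UU$ or in $\UU^\perp$, giving the triangle (which splits since $\Hom(\D^{\leq 0}, \D^{\geq 1}) = 0$ kills the connecting map's obstruction in one direction — actually the triangle need not split, but it exists and is the truncation triangle). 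Let me simply say: grouping summands gives $\tau^{\leq 0}X \to X \to \tau^{\geq 1}X$ and boundedness of $\Db\AA$ makes the $t$-structure bounded.

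The main obstacle, and the part requiring genuine care rather than formal manipulation, is verifying that the heart $\HH$ is hereditary and derived equivalent to $\AA$. For hereditariness of $\HH$: since $\AA$ is hereditary, $\Db\AA$ has homological dimension $1$, meaning $\Hom(X, Y[j]) = 0$ for $|j| \geq 2$ whenever $X, Y \in \AA$; one must show this persists for $X, Y \in \HH$, which follows because $\HH \subseteq \D^{\leq 0} \cap \D^{\geq 0}$ and the cohomology objects of any $H \in \HH$ with respect to the \emph{standard} $t$-structure are controlled — concretely, $\D^{\leq 0} = \UU$ is built from shifts of objects of $\AA$ by non-positive amounts and $\D^{\geq 1} = \UU^\perp$ from non-negative amounts, so any $H \in \HH$ has standard cohomology concentrated in at most two adjacent degrees, and the hereditary bound on $\Ext$ in $\AA$ then bounds $\Ext$ in $\HH$. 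For the derived equivalence: the inclusion $\HH \hookrightarrow \Db\AA$ extends to an exact functor $\Db\HH \to \Db\AA$, and this is an equivalence by the standard criterion (e.g. Beilinson's lemma) once one knows $\Hom_{\Db\AA}(H, H'[j]) = \Hom_{\Db\HH}(H, H'[j])$ for all $j$ and all $H, H' \in \HH$ — for $j \leq 0$ and $j = 1$ this is automatic from the $t$-structure, and for $j \geq 2$ both sides vanish by hereditariness. I would then invoke \cite{BergVanRoosmalen10} for the precise form of this last argument rather than reproducing it, since the statement explicitly attributes the result there; the sketch above is the road map one would fill in.
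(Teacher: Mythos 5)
The paper itself contains no proof of this theorem: it is imported wholesale from \cite{BergVanRoosmalen10}, so there is no in-paper argument to compare yours against and I can only judge the proposal on its own terms (note also that the intended hypotheses clearly include that $\AA$ is hereditary, as you tacitly assume). Your first half is essentially correct, and simpler than you make it: since a nonzero morphism between indecomposables is itself a suspended path, every indecomposable of $\Db \AA$ lies in $\UU$ or in $\UU^\perp$, and the truncation triangle of an arbitrary object is just the split one obtained by grouping indecomposable summands --- the detour through $X \cong \oplus_i H^i(X)[-i]$ is unnecessary. What you do skip is boundedness: this is precisely where the indecomposability of $\AA$ and the hypothesis $\UU^\perp \neq 0$ enter (via Ringel's lemma one gets $X[n] \in \UU$ for $n \gg 0$, and, using that $\UU^\perp$ is closed under predecessors, $X[n] \in \UU^\perp$ for $n \ll 0$); ``boundedness of $\Db\AA$'' is not by itself a reason. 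This gap is fillable.

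The genuine gap is in the second half. Your argument rests on the claim that every object of $\HH$ has standard cohomology concentrated in at most two adjacent degrees, and that claim is false. Take $\AA = \rep(1 \to 2 \to 3)$ and let $\UU$ be the full additive subcategory whose indecomposables are $S_1[k]$ for $k \geq 0$; $S_2[k]$, $P_2[k]$, $P_1[k]$, $I_2[k]$ for $k \geq 1$; and $S_3[k]$ for $k \geq 2$. Checking the finitely many nonzero Hom- and Ext-spaces shows $\UU$ is closed under successors and $\UU^\perp \neq 0$, yet the heart is $\add\{S_1,\, S_2[1],\, P_2[1],\, P_1[1],\, I_2[1],\, S_3[2]\}$, which meets three standard degrees. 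Consequently the inference ``two adjacent degrees, hence the hereditary bound on $\Ext_\AA$ bounds $\Ext_\HH$'' does not go through: for summands $A[i], B[j]$ of heart objects with $i - j \geq 1$, the group $\Hom_{\Db\AA}(A[i], B[j+n])$ for $n \geq 2$ equals $\Hom_\AA(A,B)$ or $\Ext^1_\AA(A,B)$, and its vanishing is exactly what must be proved. Your later step ``for $j \geq 2$ both sides vanish by hereditariness'' is then circular, since the vanishing of $\Hom_{\Db\AA}(H, H'[j])$ for $j \geq 2$ is the content of the theorem rather than a consequence of it. The missing ingredient is a path argument: for instance, a non-split extension $0 \to B \to M \to A \to 0$ in $\AA$ with $A, B$ indecomposable forces nonzero maps $B \to M_i \to A$ through every indecomposable summand $M_i$ of $M$, hence a suspended path from $B[j]$ to $A[j]$, so $B[j] \in \UU$ and $A[j] \in \UU^\perp$ cannot coexist with $\Ext^1_\AA(A,B) \neq 0$; a separate argument is needed to exclude the $\Hom_\AA(A,B)$ contributions. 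Deferring to \cite{BergVanRoosmalen10} is legitimate, but the road map you propose for that deferred step is not one that can be filled in as stated.
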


Following \cite{KellerVossieck88} we will say that a full additive subcategory $\UU$ of $\Db \AA$ is an \emph{aisle} if and only if
\begin{enumerate}
\item $\UU[1] \subset \UU$,
\item $\UU$ is closed under extensions; i.e. for each triangle $X \to Y \to Z \to X[1]$ we have $Y \in \UU$ if $X,Z \in \UU$
\item the inclusion $\UU \to \Db \AA$ admits a right adjoint.
\end{enumerate}

A full additive Krull-Schmidt subcategory $\UU$ of $\Db \AA$ closed under successors is an aisle.  The first two conditions are clear and the action of the right adjoint on objects is given as follows: $X \in \Ob \Db \AA$ get mapped to the largest direct summand of $X$ lying in $\UU$.

It was shown in \cite{KellerVossieck88} that there is a bijection between the aisles and the $t$-structures, given by $\DD^{\geq 0} = \UU$ and $\DD^{\leq 0} = \UU[1]^\perp$.

If $\Db \AA$ has Auslander-Reiten sequences, we will say that an aisle $\UU$ is \emph{$\t$-invariant} if $\t \UU \cong \UU$ as subcategories of $\Db \AA$.

\begin{proposition}\label{proposition:TauInvariant}
Let $\AA$ be an indecomposable Ext-finite abelian category with Serre duality.  Let $\UU$ be a $\t$-invariant aisle on $\Db \AA$ such that both $\UU$ and $\UU^\perp$ are nonzero.  The heart $\HH$ of the corresponding $t$-structure is hereditary and derived equivalent to $\AA$.  Furthermore, $\HH$ has no projective objects. 
\end{proposition}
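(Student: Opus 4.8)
The statement has two parts, and the plan is to obtain the first from Theorem~\ref{theorem:Split} and the second from a short Serre-duality computation. For the first part the only thing to verify is that the $\t$-invariant aisle $\UU$ is \emph{closed under successors}; granting this, Theorem~\ref{theorem:Split} immediately gives that $\HH$ is hereditary and derived equivalent to $\AA$. Throughout I would use that an aisle satisfies $\UU=\DD^{\leq 0}$, $\UU^{\perp}=\DD^{\geq 1}$ and $\HH=\DD^{\leq 0}\cap\DD^{\geq 0}$, so that $\HH\subseteq\UU$, $\HH[-1]\subseteq\UU^{\perp}$, $\Hom(\UU,\UU^{\perp})=0$ and $\UU\cap\UU^{\perp}=0$. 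I would also first record the effect of $\t$-invariance: since $\bS\cong\t[1]$ and $\t\UU\cong\UU$ we get $\bS\UU=\UU[1]$, and, $\bS$ being an equivalence, $\bS\UU^{\perp}=(\bS\UU)^{\perp}=\UU^{\perp}[1]$.

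To show that $\UU$ is closed under successors, it suffices (using $\UU[1]\subseteq\UU$ for the shift steps of a suspended path) to prove that every indecomposable $Y\in\Db\AA$ lies in $\UU$ or in $\UU^{\perp}$: then a nonzero map from an indecomposable of $\UU$ to such a $Y$ forces $Y\in\UU$, because maps from $\UU$ to $\UU^{\perp}$ vanish. So suppose $Y$ is indecomposable with $Y\notin\UU$ and $Y\notin\UU^{\perp}$, and let $A\to Y\to B\to A[1]$ be the truncation triangle, with $A\in\UU$ and $B\in\UU^{\perp}$. Then $A$ and $B$ are both nonzero, so indecomposability of $Y$ forces the connecting morphism $B\to A[1]$ to be nonzero, i.e.\ $\Hom(B,A[1])\neq 0$. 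By Serre duality $\Hom(B,A[1])\cong\Hom(A[1],\bS B)^{*}$, so $\Hom(A[1],\bS B)\neq 0$; but $\bS B\in\UU^{\perp}[1]$, so writing $\bS B=W[1]$ with $W\in\UU^{\perp}$ and shifting gives $\Hom(A,W)\neq 0$, contradicting $A\in\UU$. Hence every indecomposable lies in $\UU$ or $\UU^{\perp}$, $\UU$ is closed under successors, and Theorem~\ref{theorem:Split} applies.

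For the statement about projectives, let $P\in\HH$ be projective. Since $\HH$ is hereditary and has Serre duality, the Serre functor sends projective objects to injective objects, so $\bS P\in\HH$ (this is the standard computation using $\Ext^{\geq 1}_{\HH}(P,-)=0$ together with $\Hom(X,Y)\cong\Hom(Y,\bS X)^{*}$). Consequently $\t P=\bS P[-1]\in\HH[-1]\subseteq\UU^{\perp}$. On the other hand $P\in\HH\subseteq\UU$ and $\UU$ is $\t$-invariant, so $\t P\in\UU$. Therefore $\t P\in\UU\cap\UU^{\perp}=0$, and since $\t$ is an equivalence, $P=0$.

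The only step with real content is showing that a $\t$-invariant aisle is closed under successors; the remaining work is an appeal to Theorem~\ref{theorem:Split} and elementary properties of the Serre functor. The key device in that step is to read non-splitness of the truncation triangle of an indecomposable as a nonzero connecting morphism and push it through Serre duality, landing in $\Hom(\UU,\UU^{\perp})$.
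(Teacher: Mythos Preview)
Your proposal is correct and follows essentially the same route as the paper. The key step in both is the Serre-duality computation showing that the connecting map $V\to U[1]$ in the truncation triangle vanishes: the paper writes this as $\Hom(V,U[1])\cong\Hom(\t^{-1}U,V)^{*}=0$ using $\t^{-1}U\in\UU$, while you phrase the identical computation via $\bS\UU^{\perp}=\UU^{\perp}[1]$; either way the triangle splits, every indecomposable lies in $\UU$ or $\UU^{\perp}$, and Theorem~\ref{theorem:Split} applies. For the absence of projectives the paper is terser (``$\HH$ is closed under $\t$''), which unpacks to the observation that $\t P\in\HH$ together with $\Ext^{1}_{\HH}(P,\t P)\cong\Hom(P,\bS P)\cong\End(P)^{*}\neq 0$ contradicts projectivity; your argument that $\bS P\in\HH$ forces $\t P\in\HH[-1]\cap\UU=0$ is a correct variant of the same idea.
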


\begin{proof}
For any $X \in \Db \AA$, there is a triangle $U \to X \to V \to U[1]$ where $U \in \UU$ and $V \in \UU^\perp$.  We have $\Hom(V,U[1]) \cong \Hom(\t^{-1} U, V)^* = 0$ since $\t^{-1} U \in \UU$.  This shows that $\UU$ induces a split $t$-structure on $\Db \AA$.  The rest follows easily from Theorem \ref{theorem:Split} and the fact that $\HH$ is closed under $\t$.
\end{proof}

\subsection{Ample sequences}
For the benefit of the reader, we will recall some definitions and results from \cite{Polishchuk05}.  Throughout, let $\HH$ be a Hom-finite abelian category.

We begin with the definition of an ample sequence.
\begin{enumerate}
\item A sequence $\EE = (L_i)_{i\in \bZ}$ is called \emph{projective} if for every epimorphism $X \to Y$ in $\HH$ there is an $n\in \bZ$ such that $\Hom(L_i,X) \to \Hom(L_i,Y)$ is surjective for $i < n$. 

\item A projective sequence $\EE = (L_i)_{i\in \bZ}$ is called \emph{coherent} if for every $X \in \Ob \HH$ and $n \in \bZ$, there are integers $i_1, \ldots, i_s \leq n$ such that the canonical map
$$\bigoplus_{j=1}^s \Hom(L_{i_j},X) \otimes \Hom(L_i, L_{i_j}) \longrightarrow \Hom(L_i,X)$$
is surjective for $i\ll 0$.

\item A coherent sequence $\EE =  (L_i)_{i\in \bZ}$ is \emph{ample} if for all $X \in \HH$, we have $\Hom(L_i,X) \not=0$ for $i\ll 0$.
\end{enumerate}

Let $A_{ij} = \Hom(L_i,L_j)$ for $i < j$, $A_{ii} = k$.  We may define an algebra $A = A(\EE) = \oplus_{i \leq j} A_{ij}$ in a natural way.  If $\EE$ is a coherent sequence, then $A$ is a \emph{coherent} $\bZ$-algebra in the sense of \cite{Polishchuk05} (see \cite[Proposition 2.3]{Polishchuk05}).

As usual, a graded right $A$-module $M$ is called \emph{coherent} if it is finitely generated and the kernel of any morphism $P \to M$ is finitely generated where $P$ is a finitely generated projective.  These modules form an abelian category $\coh A$ and the coherent modules which are bounded in grading form a Serre subcategory denoted by $\coh^{b} A$.  We define the quotient 
$$\cohproj A \cong \coh A / {\coh}^{b} A.$$

Finally, let $\EE = (L_i)_{i\in \bZ}$ be a sequence.  We will denote by $\HH_0$ the full subcategory of $\HH$ spanned by the objects $X \in \Ob \HH$ with the property that $\Hom(L_i,X) =0$ for $i\ll 0$.  If $\EE$ is projective, then $\HH_0$ is a Serre subcategory of $\HH$; if $\EE$ is ample then $\HH_0 = 0$.

We have the following general result.

\begin{theorem}\label{theorem:Polishchuk}
\cite[Theorem 2.4]{Polishchuk05} Let $\EE=(L_i)$ be a coherent sequence, $A=A(\EE)$ the corresponding $\bZ$-algebra, then there is a equivalence of categories $\HH / \HH_0 \cong \cohproj A$.
\end{theorem}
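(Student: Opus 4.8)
The plan is to adapt the proof of Serre's description of coherent sheaves on a projective scheme to the $\bZ$-algebra framework of \cite{Polishchuk05}. In one direction one uses the section functor: for $X \in \HH$ set $\Gamma X = \bigoplus_{i \in \bZ}\Hom(L_i, X)$, made into a graded right $A$-module by letting $a \in A_{ij} = \Hom(L_i, L_j)$ act on $\varphi \in \Hom(L_j, X)$ via $\varphi \mapsto \varphi \circ a$. The module $\Gamma X$ need not be coherent, but the coherence of $\EE$ says exactly that the submodule $N_X \subseteq \Gamma X$ generated by the homogeneous pieces $\Hom(L_i, X)$ with $i \ll 0$ is finitely generated and agrees with $\Gamma X$ in all sufficiently negative degrees; since $A$ is a coherent $\bZ$-algebra (\cite[Proposition 2.3]{Polishchuk05}), $N_X$ is a coherent module. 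A short check shows that the class $[N_X]$ in $\cohproj A$ does not depend on the choices involved — two candidates have the same restriction to low degrees, so after intersecting they differ by modules supported in finitely many degrees, i.e. by objects of $\coh^b A$ — and that this assignment is functorial. We thus obtain $F \colon \HH \to \cohproj A$ with $F(L_j) = [P_j]$, where $P_j$ is the representable module, characterized by $\Hom_{\Gr A}(P_j, M) = M_j$, so in particular $\Hom_{\Gr A}(P_k, P_j) = A_{kj} = \Hom_\HH(L_k, L_j)$.

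Next I would verify the formal properties of $F$. Left-exactness of each $\Hom(L_i,-)$, together with the projectivity of $\EE$ — which forces $\Hom(L_i,-)$ to carry epimorphisms to epimorphisms once $i \ll 0$ — makes $F$ exact. An object $X$ lies in $\HH_0$ precisely when $\Hom(L_i, X) = 0$ for $i \ll 0$, and since $N_X$ is bounded above (being finitely generated) and agrees with $\Gamma X$ in low degrees, this happens precisely when $N_X$ has finite support, i.e. when $F(X) = 0$. As $\HH_0$ is a Serre subcategory, $F$ descends to an exact functor $\bar F \colon \HH / \HH_0 \to \cohproj A$, and it remains to show $\bar F$ is an equivalence.

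Essential surjectivity is the easy half: given a coherent module $M$, coherence of $A$ gives a presentation $\bigoplus_u P_{k_u} \to \bigoplus_t P_{j_t} \to M \to 0$ by finitely generated free modules; under the identification $\Hom_{\Gr A}(P_{k}, P_{j}) = \Hom_\HH(L_k, L_j)$ the presentation map comes from a morphism $d \colon \bigoplus_u L_{k_u} \to \bigoplus_t L_{j_t}$ of $\HH$, and since $\bar F$ is exact and $\bar F(d)$ is, under these identifications, the class of the original presentation map, we get $\bar F(\coker d) \cong [M]$. For full faithfulness one resolves a general object: using coherence of $\EE$ again, every $X \in \HH/\HH_0$ admits a (possibly infinite) resolution $\cdots \to \bigoplus L_b \to \bigoplus L_a \to X \to 0$ whose terms are finite sums of the $L_i$, since at each stage the coherence condition makes the evaluation map from a suitable finite sum of $L_i$'s an epimorphism in $\HH/\HH_0$, and the kernel is again represented by an object of $\HH$ because the localization $\HH \to \HH/\HH_0$ is exact. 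Applying the exact functor $\bar F$ produces a matching resolution of $\bar F X$ by the $[P_j]$, so the usual comparison of $\Hom$ long exact sequences reduces the full faithfulness of $\bar F$ to showing that the natural map
$$\Hom_{\HH/\HH_0}(L_j, Y) \longrightarrow \Hom_{\cohproj A}\bigl([P_j], \bar F Y\bigr)$$
is an isomorphism for every $j$ and every $Y \in \HH$.

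I expect this last isomorphism to be the crux, and it is the point where both clauses in the definition of an ample sequence are genuinely needed. The left-hand side is $\Hom(L_j, Y)$; the right-hand side is a direct limit of $\Hom_{\Gr A}(N', N_Y)$ over finitely generated submodules $N' \subseteq P_j$ with $P_j / N'$ of finite support, and the truncations $P_j^{\leq m}$ ($m \ll 0$) are cofinal among such $N'$. A map $\varphi \colon L_j \to Y$ induces the module map $a \mapsto \varphi \circ a$, which gives the natural map above. Injectivity follows because, by the coherence condition applied to $X = L_j$, a finite sum of $L_i$'s with $i \ll 0$ maps onto $L_j$ in $\HH/\HH_0$, so a morphism whose composites $L_i \to L_j \to Y$ all vanish for $i \ll 0$ already vanishes in $\HH/\HH_0$. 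Surjectivity follows by lifting: a module map $N' \to N_Y$ with $N' = P_j^{\leq m}$ is determined by a finite family of morphisms $L_{i_t} \to Y$ on a generating set of $N'$ subject to the relations of $N'$; those same generators present $L_j$ in $\HH/\HH_0$, the generators of the syzygy module being exactly the relations of $N'$, so — these relations being respected by hypothesis and $\Hom_{\HH/\HH_0}(-,Y)$ being left exact — the family extends to a morphism $L_j \to Y$, which one checks induces the given module map. Once this isomorphism is established, $\bar F$ is fully faithful and, by the previous step, essentially surjective, hence an equivalence $\HH/\HH_0 \xrightarrow{\ \sim\ } \cohproj A$.
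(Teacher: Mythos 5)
The paper does not prove this statement at all: it is quoted verbatim from \cite[Theorem 2.4]{Polishchuk05} and used as a black box, so there is no in-paper argument to compare yours against. Judged on its own, your reconstruction follows the standard Serre/Artin--Zhang template (section functor one way, presentations by the $P_j$'s the other way, with the crux being $\Hom_{\HH/\HH_0}(L_j,Y)\cong\Hom_{\cohproj A}([P_j],\bar F Y)$), and that architecture does work; the essential-surjectivity and crux-isomorphism steps, while compressed, contain the right ideas (in particular you correctly isolate that coherence plus projectivity of $\EE$ make the evaluation map $\bigoplus_t \Hom(L_{i_t},L_j)\otimes L_{i_t}\to L_j$ an epimorphism in $\HH/\HH_0$).

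The genuine gap is the assertion ``$N_X$ is finitely generated \ldots since $A$ is a coherent $\bZ$-algebra, $N_X$ is a coherent module.'' Over a coherent (graded) algebra, finitely generated does \emph{not} imply coherent; you need finitely presented, and nothing you have said produces a finite presentation of $N_X$. Without this, $F$ is not even known to land in $\coh A$, so everything downstream is unsupported. The same unjustified jump recurs when you declare that the bounded discrepancy between two candidates for $N_X$, or $P_j/N_{L_j}$, lies in $\coh^b A$: by the paper's definition $\coh^b A$ consists of \emph{coherent} bounded modules, so you must also know that bounded modules with finite-dimensional components are coherent. Both points are fixable but require a second, explicit use of the coherence hypothesis on $\EE$: the degree-$i$ syzygies of the presentation $\bigoplus_t \Hom(L_{i_t},X)\otimes P_{i_t}\to N_X$ are exactly $\Hom(L_i,K)$ for $K=\ker\bigl(\bigoplus_t \Hom(L_{i_t},X)\otimes L_{i_t}\to X\bigr)$ by left exactness of $\Hom$, so coherence of $\EE$ applied to $K$ makes the syzygy module finitely generated in degrees $i\ll 0$, and Hom-finiteness of $\HH$ takes care of the remaining finitely many degrees (similarly, coherence applied to each $L_i$ shows the simple modules, hence all bounded locally finite modules, are coherent). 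You should either supply these arguments or restructure the proof to define the functor in the opposite direction, from presentations of coherent $A$-modules to $\HH/\HH_0$, which avoids having to prove coherence of section modules in the first place.
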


We will be interested in the special case where there is an automorphism $t : \Db \HH \longrightarrow \Db \HH$ such that $L_i \cong t^i L$.  We let $R = R(\EE) = \oplus_{i \in \bN} R_i$ where $R_i = \Hom(L,t^i L)$ and make it into a $\bZ$-graded algebra in an obvious way.

If $R$ is noetherian then the coherent $R$-modules correspond to the finitely generated ones and $\cohproj R$ corresponds to $\qgr R$, the category of finitely generated modules modulo the finite dimensional ones considered in \cite{ArtinZhang94}.

We will use following corollary of Theorem \ref{theorem:Polishchuk}.

\begin{corollary}\label{corollary:Polishchuk}
Let $\AA$ be a Hom-finite abelian category, $t$ be an autoequivalence of $\AA$, and $L$ an object of $\AA$.  If $\EE=(t^i L)$ is a coherent sequence and the corresponding graded algebra $R = R(\EE)$ is noetherian, then $\HH / \HH_0 \cong \qgr R$.
\end{corollary}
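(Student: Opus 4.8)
The plan is to derive this from Theorem~\ref{theorem:Polishchuk} together with the standard correspondence between $\bZ$-graded algebras and $\bZ$-algebras (in the statement of the corollary, $\HH$ denotes $\AA$ itself, and $\HH_0 \subseteq \AA$ is as defined above). Since $\EE = (t^i L)$ is a coherent sequence, Theorem~\ref{theorem:Polishchuk} already provides an equivalence $\HH / \HH_0 \cong \cohproj A$, where $A = A(\EE) = \bigoplus_{i \le j} A_{ij}$ with $A_{ij} = \Hom(t^i L, t^j L)$ and $A_{ii} = k$. So the only remaining task is to identify $\cohproj A$ with $\qgr R$, where $R = R(\EE) = \bigoplus_{n \ge 0} \Hom(L, t^n L)$.

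First I would use the autoequivalence $t$ to identify $A$ with the $\bZ$-algebra canonically attached to the $\bZ$-graded algebra $R$. Applying $t^{-i}$ to morphisms yields isomorphisms $A_{ij} = \Hom(t^i L, t^j L) \xrightarrow{\sim} \Hom(L, t^{j-i} L) = R_{j-i}$, and one checks that these are compatible with composition, so that $A_{ij} \cong R_{j-i}$ as $\bZ$-algebras. It is then routine that the category $\Gr A$ of graded right $A$-modules is equivalent to the category $\Gr R$ of $\bZ$-graded right $R$-modules: a graded $A$-module $\bigoplus_i M_i$ amounts exactly to the data of the spaces $M_i$ together with actions $M_i \otimes R_{j-i} \to M_j$ satisfying the evident associativity, which is precisely a graded $R$-module. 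This equivalence is exact and sends finitely generated projectives to finitely generated projectives.

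Next I would check that it restricts to the relevant Serre subcategories. Coherence of a module is expressed purely in terms of finite generation together with finite generation of kernels of maps from finitely generated projectives, and being bounded in grading is a condition on the graded pieces; both conditions are preserved in each direction. Hence $\coh A \cong \coh R$ and ${\coh}^b A \cong {\coh}^b R$, and passing to quotients gives $\cohproj A \cong \cohproj R$. Finally, the noetherian hypothesis on $R$ ensures that coherent $R$-modules coincide with the finitely generated ones, whence $\cohproj R \cong \qgr R$ as recalled above (following \cite{ArtinZhang94}). Chaining the equivalences yields $\HH / \HH_0 \cong \qgr R$.

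The only obstacle is bookkeeping: one must verify that the $\bZ$-algebra isomorphism $A(\EE) \cong R(\EE)$ genuinely respects compositions (not merely the underlying graded vector spaces), and that the induced equivalence of module categories matches up the conditions defining ${\coh}^b$ on the two sides so that the quotient categories are identified on the nose. None of this is deep, but it is the part that has to be written out with some care.
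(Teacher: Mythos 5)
Your proposal is correct and follows exactly the route the paper intends: the paper in fact offers no written proof of this corollary, only the preceding remark that $\cohproj R$ corresponds to $\qgr R$ when $R$ is noetherian, so the content is precisely your chain $\HH/\HH_0 \cong \cohproj A(\EE) \cong \cohproj R \cong \qgr R$ via Theorem~\ref{theorem:Polishchuk} and the standard equivalence between graded modules over the $\bZ$-graded algebra $R$ and over its associated $\bZ$-algebra. The bookkeeping you flag (compatibility of the isomorphisms $A_{ij}\cong R_{j-i}$ with composition, and matching the bounded/finite-dimensional subcategories) is exactly what the paper leaves implicit.
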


\subsection{$n$-Kronecker quivers}

Let $A$ be a finite dimensional algebra.  We will denote by $\mod A$ the category of finite dimensional right $A$-modules.  For a finite acyclic quiver $Q$, we will denote its path algebra by $kQ$.

An $n$-Kronecker quiver $K_n$ ($n \geq 1$) is a quiver with two vertices $\{x,y\}$ and $n$ arrows $x \to y$.  The path algebra $kK_n$ is hereditary, and is of finite representation type if $n=1$, is of tame type if $n=2$, and is of wild type is $n \geq 3$.  The $2$-Kronecker quiver is also called the Kronecker quiver and its path algebra is called the Kronecker algebra.

The following result is standard (\cite{Beilinson78}).  The functor $F$ below is given by $- \stackrel{L}{\otimes}_A (\OO_{\bP^1} \oplus \OO_{\bP^1}(1))$ where $A \cong \End (\OO_{\bP^1} \oplus \OO_{\bP^1}(1))$ is the Kronecker algebra.

\begin{theorem}
There is an exact equivalence $F:\Db \mod A \to \Db \coh \bP^1$ where $A$ is the Kronecker algebra and $F(A) \cong \OO_{\bP^1} \oplus \OO_{\bP^1}(1)$.
\end{theorem}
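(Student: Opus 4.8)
The plan is to invoke Beilinson's resolution of the diagonal on $\bP^1$ and compute explicitly. First I would recall that $\End(\OO_{\bP^1} \oplus \OO_{\bP^1}(1))$ is indeed the Kronecker algebra $A = kK_2$: the only nonzero Hom-spaces are $\Hom(\OO_{\bP^1},\OO_{\bP^1}) \cong k$, $\Hom(\OO_{\bP^1}(1),\OO_{\bP^1}(1)) \cong k$, and $\Hom(\OO_{\bP^1},\OO_{\bP^1}(1)) \cong H^0(\bP^1, \OO(1))$, which is two-dimensional, while $\Hom(\OO_{\bP^1}(1),\OO_{\bP^1}) = 0$; this gives exactly two vertices with two arrows between them. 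One also checks $\Ext^1$ between these sheaves vanishes (since $H^1(\bP^1,\OO(j)) = 0$ for $j \geq 0$), so $E := \OO_{\bP^1} \oplus \OO_{\bP^1}(1)$ is a tilting object in $\coh \bP^1$.

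Next I would set up the functor. Define $F = - \Lotimes_A E : \Db \mod A \to \Db \coh \bP^1$, where $E$ is viewed as an $A$-$\OO_{\bP^1}$-bimodule via the identification $A \cong \End(E)$. (More precisely one works with the complex $R\Hom_{\bP^1}(E, -)$ in the other direction, or with a dg-enhancement; but since $E$ has a two-term projective resolution over $A$ this is elementary.) The key input is that $F$ sends the indecomposable projective $A$-modules $P_x, P_y$ to $\OO_{\bP^1}$ and $\OO_{\bP^1}(1)$ respectively — this is immediate from the definition once the bimodule structure is unwound — and hence $F(A) = F(P_x \oplus P_y) \cong \OO_{\bP^1} \oplus \OO_{\bP^1}(1) = E$.

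To show $F$ is an equivalence, I would verify it is fully faithful on the generating subcategory and essentially surjective. Fully faithful: it suffices to check that $F$ induces isomorphisms $\Hom_{\Db \mod A}(P_i, P_j[n]) \xrightarrow{\sim} \Hom_{\Db \coh \bP^1}(\OO_{\bP^1}(a_i), \OO_{\bP^1}(a_j)[n])$ for $i,j \in \{x,y\}$ and all $n$, where $a_x = 0, a_y = 1$. For $n = 0$ this is the identification $A \cong \End(E)$ built into the construction, and for $n \neq 0$ both sides vanish — the left side because $P_i$ is projective, the right side by the $\Ext^1$-vanishing noted above together with $\Db \coh \bP^1$ being the derived category of a hereditary category. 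Since the $P_i$ generate $\Db \mod A$ as a triangulated category and the $\OO_{\bP^1}(a_i)$ generate $\Db \coh \bP^1$ (because $E$ is tilting, equivalently because $\OO_{\bP^1}, \OO_{\bP^1}(1)$ generate via Beilinson's theorem), a standard dévissage extends full faithfulness to all of $\Db \mod A$ and then essential surjectivity follows since the image is a triangulated subcategory containing a set of generators and closed under direct summands.

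The main obstacle is really just bookkeeping rather than a conceptual difficulty: one must be careful about the bimodule structure on $E$ and the direction of the functor (whether to use $- \Lotimes_A E$ or $R\Hom_{\bP^1}(E,-)$), and to know a priori that $\Db \coh \bP^1$ is generated by line bundles — this last point is exactly the content of Beilinson's resolution of the diagonal, so in a sense the theorem is being cited more than proved. Alternatively, and perhaps more cleanly for this paper, I would simply appeal to the general tilting formalism (Happel's theorem, or the Bondal–Rickard framework): $E$ is a tilting object in the hereditary category $\coh \bP^1$ with endomorphism ring the Kronecker algebra, so $R\Hom(E,-)$ is an exact equivalence $\Db \coh \bP^1 \to \Db \mod A$, and $F$ is its quasi-inverse, automatically satisfying $F(A) \cong E$. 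I expect the paper takes this route, citing \cite{Beilinson78}.
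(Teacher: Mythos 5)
Your proposal is correct and matches the paper's treatment: the paper gives no proof at all, simply citing \cite{Beilinson78} as standard and describing the functor as $- \Lotimes_A (\OO_{\bP^1} \oplus \OO_{\bP^1}(1))$, which is exactly the tilting-theoretic route you outline. Your additional detail (checking $\End(E)$ is the Kronecker algebra, $\Ext^1$-vanishing, generation, and dévissage) is a correct fleshing-out of the same argument.
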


The category $\mod kK_2$ is well understood (see for example \cite{ARS}).  We will use the following.  Let $P_x,P_y \in \mod kK_2$ be indecomposable projective objects associated with the vertices $x,y$ of $K_2$.  Then $\dim \Hom(P_x,P_y) = 2$, every nonzero $f:P_y \to P_x$ is a monomorphism and the cokernel $R_f$ is a regular module with $\dim \Hom(P_x,R_f) = \dim \Hom(P_y,R_f) = 1$.  Moreover, $\Hom(R_f,R_g) = \Ext(R_f,R_g) = 0$ when $f,g \in \Hom(P_x,P_y)$ are linearly independent.

The algebra $kK_2$ is of tame type.  For algebras of wild type, we have the following proposition (see for example \cite{ARS}).

\begin{proposition}
A finite dimensional hereditary algebra $A$ is of wild type if and only if there is an indecomposable module $Z \in \mod A$ such that
$$\chi(Z,Z) = \dim \Hom(Z,Z) - \dim \Ext(Z,Z) < 0.$$
\end{proposition}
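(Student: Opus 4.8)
The plan is first to reduce to the basic case $A \cong kQ$ for a finite acyclic quiver $Q$: over the algebraically closed field $k$ a finite dimensional hereditary algebra is Morita equivalent to a path algebra $kQ$ with $Q$ finite and acyclic (see, e.g., \cite{ARS}), and representation type as well as the numbers $\dim\Hom$ and $\dim\Ext$ are unchanged by Morita equivalence. Since an indecomposable module is supported on a single connected component of $Q$, and $kQ$ is of wild type precisely when the path algebra of one of its components is, I may also assume $Q$ connected. The key point is then the identity $\chi(Z,Z) = q(\udim Z)$, where $q = q_Q$ is the Tits form $q(d) = \sum_{i\in Q_0} d_i^2 - \sum_{a\in Q_1} d_{s(a)}d_{t(a)}$ on $\bZ^{Q_0}$; this holds because $A$ is hereditary, so the Euler form is bilinear and computed from dimension vectors. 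Thus the statement to prove reads: $Q$ is wild if and only if $q$ is negative on the dimension vector of some indecomposable representation.

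For the implication ``$\Leftarrow$'' I argue contrapositively. If $A$ is not wild, then by the classification of representation type for hereditary algebras every connected component of $Q$ is a Dynkin or an extended Dynkin quiver, and for these the Tits form is well known to be positive semidefinite. Hence $\chi(Z,Z) = q(\udim Z) \geq 0$ for every module $Z$, so no indecomposable can satisfy $\chi(Z,Z) < 0$.

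For ``$\Rightarrow$'', assume $Q$ connected and wild. By the same classification $q$ is not positive semidefinite; as $q(e_i) = 1 > 0$ on a coordinate vector, $q$ is in fact indefinite, so there is some $e$ with $q(e) < 0$. Replacing each coordinate of $e$ by its absolute value only decreases $q$, since $\sum_a d_{s(a)}d_{t(a)} \leq \sum_a |d_{s(a)}|\,|d_{t(a)}|$; so I may take $e \geq 0$. Writing $q(e)$ as the sum of the values of $q$ on the restrictions of $e$ to the connected components of the full subquiver on $\supp e$ (arrows contribute only within a component), some component carries a restriction $d$ of $e$ with $d \geq 0$, $d \neq 0$, $\supp d$ connected, and $q(d) < 0$. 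By Kac's theorem, a nonzero $d \in \bN^{Q_0}$ with connected support and $q(d) \leq 0$ is the dimension vector of an indecomposable representation over the algebraically closed field $k$; taking such a $Z$ gives $\chi(Z,Z) = q(d) < 0$.

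I expect the realization step in ``$\Rightarrow$'' to be the crux: producing a dimension vector with negative Tits value is elementary linear algebra, but turning it into an \emph{indecomposable} module is precisely the hard half of Kac's theorem. If one prefers to avoid citing Kac, the alternative is to note that the full subquiver on $\supp d$ is again connected and wild, pass to a minimal wild full subquiver, verify the statement by hand on the finite list of such quivers, and extend the resulting indecomposable by zero — an indecomposable representation of a full subquiver remains indecomposable after extension by zero, with unchanged dimension vector and hence unchanged self-Euler form. The remaining ingredients (the hereditary Euler-form identity $\chi = \langle\,,\,\rangle$ and the semidefiniteness dichotomy for graph forms) are standard.
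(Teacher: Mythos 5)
Your reduction to a connected quiver $Q$, the identity $\chi(Z,Z)=q(\udim Z)$ for the Tits form $q$, and the direction ``not wild $\Rightarrow$ $q$ positive semidefinite $\Rightarrow$ $\chi(Z,Z)\geq 0$'' are all fine (the paper offers no proof of its own here, deferring to the literature, so there is nothing to compare against). The gap is in the realization step of ``$\Rightarrow$'': the statement you attribute to Kac's theorem --- that every nonzero $d\in\bN^{Q_0}$ with connected support and $q(d)\leq 0$ is the dimension vector of an indecomposable --- is false. Kac's theorem says the dimension vectors of indecomposables are exactly the positive roots, and ``connected support and $q(d)\leq 0$'' does not imply that $d$ is an (imaginary) root; the correct criterion is membership in the Weyl orbit of the fundamental region $\{d\neq 0 : \supp d \text{ connected},\ (d,e_i)\leq 0 \text{ for all } i\}$, where $(-,-)$ is the symmetrization of $q$ with $(e_i,e_i)=2$, and this is strictly stronger. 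Concretely, let $Q$ be the star with center $c$ and $21$ leaves, and let $d$ take the value $2$ at $c$, the value $5$ at one leaf $l$, and the value $1$ at the other leaves. Then $d\geq 0$, $\supp d$ is connected and $q(d)=49-50=-1<0$, but $(d,e_l)=2\cdot 5-2=8>d_l$, so the reflection $s_l d$ has coordinates of mixed sign and $d$ cannot be a root; equivalently, any representation with $\dim V_l=5>2=\dim V_c$ splits off copies of the simple at the leaf $l$, so no indecomposable has dimension vector $d$.

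The strategy is salvageable, but you must land in the fundamental region rather than at an arbitrary vector with $q<0$. For instance: among all nonzero $d\geq 0$ with connected support and $q(d)<0$, choose one of minimal height $\sum_i d_i$. If $(d,e_i)>0$ for some $i$, then either $d_i$ is at most the sum of the values of $d$ at the neighbours of $i$, in which case $s_i d$ is nonnegative and nonzero of strictly smaller height with $q(s_id)=q(d)<0$, and some connected component of its support carries a restriction with $q<0$, contradicting minimality; or $d_i$ exceeds that sum, in which case setting the $i$-th coordinate to zero strictly decreases $q$ and again yields a contradiction. Hence the minimal $d$ lies in the fundamental region, is an imaginary root by Kac's characterization, and is therefore realized by an indecomposable $Z$ with $\chi(Z,Z)=q(d)<0$. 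Your proposed alternative also needs repair: the minimal wild connected quivers do not form a finite list, since every $m$-Kronecker quiver with $m\geq 3$ is minimal wild; that family must be treated separately (which is easy: a representation with both vertex spaces equal to $k$ and one arrow acting as the identity is indecomposable and has $\chi=2-m<0$).
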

\section{Twist functors}\label{section:TwistFunctors}

Twist functors have appeared in the literature under different names, for example \emph{tubular mutations} \cite{Meltzer97}, \emph{shrinking functors} \cite{Ringel84}, and \emph{twist functors} \cite{Seidel01}.  Similar ideas were the mutations used in \cite{GorodentsevRudakov87} in the context of exceptional bundles on projective spaces and, more generally, in \cite{Bondal89}.

In this section, we will follow the general approach twist functors from \cite{Seidel01}, but introduce a small generalization.  If an object $E$ is endo-simple (i.e. $\dim \Hom(E,E) = 1$), then our definition of an associated twist functor coincides with that of \cite{Seidel01}.

Throughout, let $\AA$ be any abelian category.

\subsection{Definitions}

We start by introducing some notations.  Let $C(\AA)$ be the category of cochain complexes.  For $C,D \in \C (\AA)$, let $\uhom(C,D)$ be the complex in $\C(\Mod k)$ given by
$$\mbox{$\uhom^i(C,D) = \prod_{j \in \bZ} \Hom_{\AA}(C^j,D^{i+j})$ with $d^i_{\uhom(C,D)}(\phi) = d_D \circ \phi - (-1)^i \phi \circ d_C.$}$$

Note that the bi-functors $\Hom$ and $\uhom$ are not naturally isomorphic.  We do however have $\Hom^i(C,D) = \Hi \uhom(C,D)$, where $\Hom^i(C,D) = \Hom(C,D[i])$.

Let $A = \End(E)$ for an $E \in \C (\AA)$, and let $V$ and $W$ be complexes of right and left $A$-modules, respectively.  Denote by $D_A$ the functor $\Hom_{A-\Mod}(-,A)$.

We define the tensor product $V \otimes_A E$ and the complex of $A$-linear maps $\ulin_A (W,E)$ in the usual way, i.e. such that $(V \otimes_A E)^i = \coprod_{k+l=i} V^k \otimes_A E^l$ and $(\ulin_A(W,E))^i = \prod_{k+l=i} (D_A W^k) \otimes_A E^l$ (we will assume these objects exist in $\AA$).  The differentials are given by combining the differentials of $V$ and $E$ in such a way as to agree on an evaluation $W \otimes_k \ulin_A(W,E) \to A$.  We refer to \cite{Seidel01} for details.

If $C \to D$ is a map of complexes, then we denote by $\{C \to D\}$ the associated total complex, obtained by collapsing the bigrading.

Let $\AA$ be a $\UU$-small abelian category where $\UU$ is a Grothendieck universe (given the category $\AA$, we may choose $\UU$ such that $\AA$ is $\UU$-small).  There is a fully faithful embedding $\AA \to \Ind \AA$ where $\Ind \AA$ is the Grothendieck category of left exact functors $\AA \to \Mod k$ (see \cite{Gabriel62}) which lifts to a fully faithful embedding $\Db \AA \to \Db \Ind \AA$ (see \cite{LoVdB06}).  The category $\Ind \AA$ is a $\UU$-category and is closed under ($\UU$-indexed) products and coproducts.  The category $\Ind \AA$ is not to be confused with the set $\ind \AA$ of chosen representatives of isomorphism classes of indecomposable objects of $\AA$.

\begin{definition}\label{definition:K}
The category $\fK \subseteq \Kp (\Ind \AA)$ is defined to be the full subcategory whose objects are the bounded below cochain complexes $C$ of $\Ind \AA$-injectives which satisfy $H^i(C) \in \AA$ for all $i$, and $H^i(C)=0$ for $i \gg 0$.
\end{definition}

The following result is an analogue of \cite[Proposition 2.4]{Seidel01}.  We will follow its proof.

\begin{proposition}
There is an exact equivalence (canonical up to natural isomorphism) $\Db \AA \cong \fK$.
\end{proposition}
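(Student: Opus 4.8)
The plan is to follow the proof of \cite[Proposition 2.4]{Seidel01} and construct the equivalence $\Db \AA \cong \fK$ via injective resolutions in $\Ind \AA$. First I would recall that $\Ind \AA$ is a Grothendieck category, hence has enough injectives, so that the natural functor $\Kp_{\text{inj}}(\Ind \AA) \to \Dp(\Ind \AA)$ identifying homotopy classes of bounded-below injective complexes with the bounded-below derived category is an equivalence (this is standard, e.g. via \cite{Gabriel62}). Restricting to the full subcategory $\fK$ of those injective complexes whose cohomology lies in $\AA$ and is bounded, this equivalence cuts down to an equivalence $\fK \cong \Db_{\AA}(\Ind \AA)$, where the right-hand side is the full triangulated subcategory of $\Dp(\Ind \AA)$ of complexes with cohomology in $\AA$ vanishing in high degrees.

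Next I would invoke the embedding $\Db \AA \to \Db \Ind \AA$ from \cite{LoVdB06}, which is fully faithful, and argue that its essential image is exactly $\Db_{\AA}(\Ind \AA)$: an object of $\Dp(\Ind \AA)$ with cohomology in $\AA$ and bounded must in fact be bounded (since boundedness below is assumed and the cohomology vanishes in high degrees), and any such complex is quasi-isomorphic to a bounded complex of objects of $\AA$ — one truncates and uses that $\AA$ is closed under the relevant kernels/cokernels inside $\Ind \AA$ because $\AA \to \Ind \AA$ is exact and reflects exactness. Composing the two equivalences $\Db \AA \xrightarrow{\sim} \Db_{\AA}(\Ind \AA) \xleftarrow{\sim} \fK$ gives the desired exact equivalence; naturality up to isomorphism is automatic since each step is canonical up to natural isomorphism (injective resolutions are functorial in the derived category).

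The main obstacle I anticipate is the bookkeeping around \emph{unbounded below} versus bounded complexes, and making precise that $\fK$ — whose objects are only required to be bounded \emph{below} as complexes, with cohomology bounded above and lying in $\AA$ — really is equivalent to the \emph{bounded} derived category $\Db \AA$ rather than $\Dp \AA$. The point is that a bounded-below injective complex with cohomology concentrated in finitely many degrees is, in $\Dp(\Ind \AA)$, isomorphic to its "stupid'' truncation which is bounded; one must check this truncation lands in $\fK$ up to homotopy equivalence, equivalently that the inclusion $\Db_{\AA}(\Ind\AA) \hookrightarrow \Dp(\Ind\AA)$ composed with the injective-resolution equivalence hits precisely $\fK$. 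A secondary technical point, needed to know the constructions of the next section make sense, is that the complexes one forms ($V \otimes_A E$, $\ulin_A(W,E)$, mapping cones) stay within $\fK$ or at least within $\Kp(\Ind \AA)$; but for the statement of the proposition itself only the equivalence is needed, and that reduces to the two standard equivalences above glued along the fully faithful embedding of \cite{LoVdB06}.
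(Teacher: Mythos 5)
Your proposal is correct and follows essentially the same route as the paper: identify $\fK$ with the full subcategory of $\Dp(\Ind\AA)$ consisting of complexes with bounded homologies lying in $\AA$ (via the standard injective-resolution equivalence in the Grothendieck category $\Ind\AA$), and then identify that subcategory with $\Db\AA$ via the fully faithful embedding $\Db\AA\to\Db\Ind\AA$. The only cosmetic difference is that the paper delegates the second identification to \cite[Theorem 15.3.1]{KashiwaraShapira06} rather than spelling out the truncation argument.
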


\begin{proof}
Since $\Ind \AA$ has enough injectives, it is clear that $\fK \cong \DD$ where $\DD \subset D^+ (\ind \AA)$ is the full subcategory of objects whose homologies satisfy the conditions of Definition \ref{definition:K}.  Again, $\DD$ is equivalent to $D^b_\AA(\Ind \AA)$, the full subcategory of $\D^+ (\ind \AA)$ consisting of these objects whose complexes are bounded and whose homologies lie in $\AA$.  The last equivalence $\Db \AA \to D^b_\AA(\Ind \AA)$ follows from \cite[Theorem 15.3.1]{KashiwaraShapira06}
\end{proof}

\subsection{Twist functors on $\fK$}

\begin{definition}
Let $E \in \Ob \fK$ be an object satisfying the following conditions
\begin{enumerate}
  \item[(S1)] $E$ is a bounded complex,
  \item[(S2)] for any $F \in \fK$, both $\Homb_\fK(E,F)$ and $\Homb_\fK(F,E)$ have finite (total) dimension over $k$.
\end{enumerate}
We will denote $\Hom(E,E)$ by $A$.  With $E$ we associate the \emph{twist functor}\index{twist functor} $T_E : \fK \to \fK$ defined by 
$$T_E F=\{\uhom(E,F) \otimes_A E \stackrel{\epsilon}{\rightarrow} F \}$$
where $\epsilon$ is the canonical map, and the \emph{dual twist functor} $T^*_E : \fK \to \fK$ defined by
$$T^*_E F=\{F \stackrel{\epsilon^*}{\rightarrow} \ulin_A(\uhom(F,E),E) \}$$
where $\epsilon^*$ is the canonical map.
\end{definition}

\begin{remark}
If direct sums of injectives in $\Ind \AA$ are again injectives, then $T_E F$ and $T^*_E F$ lie again in $\fK$.  In general, $T_E F$ and $T^*_E F$ might not lie in $K^+(\Ind \AA)$ but will satisfy the other conditions of Definition \ref{definition:K}.  We will fix an equivalence $D^b(\AA) \to \fK$ to resolve this.
\end{remark}

\begin{definition}
An object $E \in \fK$ satisfying (S1) and (S2) is called \emph{generalized $n$-spherical} for $n>0$ if
\begin{enumerate}
  \item[(S3)] $A \cong k^r$ as algebras and 
  $$\Hom^i_\fK(E,E) \cong \left\{ \begin{array}{ll}
    A & \mbox{if $i=0,n$} \\
    0 & \mbox{otherwise}
  \end{array} \right.$$
  as left and right $A$-modules.
  \item[(S4)] The composition $\Hom^j_\fK(F,E) \times \Hom^{n-j}_\fK(E,F) \to \Hom^n_\fK(E,E)$ is non-degenerate for all $j \in \bZ$ and $F \in \fK$, i.e. if $(f,-)$ or $(-,g)$ are zero maps then $f=0$ or $g=0$, respectively.
\end{enumerate}
\end{definition}

In case $r=1$, we recover the definition of $n$-spherical object as in \cite{Seidel01}.

\begin{remark}
As posed here, there is some redundancy in condition (S3).  It is sufficient that the isomorphisms for $\Hom^n_\fK(E,E)$ hold not as left and right $A$-modules, but as vector spaces.  That this induces isomorphisms as left and right $A$-modules then follows from (S4).
\end{remark}

\begin{lemma}\label{lemma:Lin}
Let $E$ be a generalized $n$-spherical object, and let $A = \End(E,E)$, then
$$H^\bullet \ulin_A(\uhom(F,E),\uhom(E,E) \otimes_A E) \cong D_A \Homb(F,E) \otimes_A \Homb(E,E) \otimes_A H^{\bullet} E$$
where $D_A$ is the dual with respect to $A$.
\end{lemma}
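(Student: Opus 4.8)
The plan is to reduce the whole computation to a statement about complexes of modules over the semisimple ring $A \cong k^r$, where $-\otimes_A-$ and the $A$-dual $D_A$ are exact and a Künneth formula holds with no correction terms.

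First I would fix the models. Since $A\cong k^r$ is semisimple, the idempotent decomposition $1=e_1+\cdots+e_r$ realises $E\cong\bigoplus_i e_iE$ in $\fK$, so we may choose a representative of $E$ that is an honest \emph{bounded} complex of $A$-modules (boundedness is condition (S1)). Then $\uhom(E,E)$ is a complex of $A$-bimodules — the left action by post-composition with $\End(E)$ on the target, the right action by pre-composition on the source — while $\uhom(F,E)$ is a complex of left $A$-modules (post-composition) and $E$ is a complex of left $A$-modules (the tautological action). Unwinding the definitions of $V\otimes_A E$ and $\ulin_A(W,-)$, the object $\uhom(E,E)\otimes_A E$ is a complex of left $A$-modules and
$$\ulin_A\bigl(\uhom(F,E),\,\uhom(E,E)\otimes_A E\bigr)$$
is, by definition, the total complex of the triple complex with $(k,p,q)$-term $\bigl(D_A\uhom(F,E)^{k}\bigr)\otimes_A\uhom(E,E)^{p}\otimes_A E^{q}$; the bimodule bookkeeping just described shows that all these tensor products are over the correct sides and match those appearing in the statement, and the index sets are essentially finite because $E$ is bounded, so there is no difference between $\prod$ and $\bigoplus$ here.

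Next comes the homological input. Over the semisimple ring $A$ every module is projective and injective, so $-\otimes_A-$ and $D_A=\Hom_A(-,A)$ are exact; hence for complexes of $A$-modules homology commutes with these operations: $H^\bullet(X\otimes_A Y)\cong H^\bullet(X)\otimes_A H^\bullet(Y)$ (the Künneth spectral sequence degenerates since all higher $\Tor^{A}$ vanish) and $H^\bullet(D_A X)\cong D_A(H^\bullet X)$ (with the evident grading conventions). Applying this iteratively to the triple complex above, and then using $H^i\uhom(C,D)=\Hom^i_\fK(C,D)$ together with $\Homb(C,D)=\bigoplus_i\Hom^i_\fK(C,D)$, I obtain
$$H^\bullet\ulin_A\bigl(\uhom(F,E),\,\uhom(E,E)\otimes_A E\bigr)\;\cong\;D_A\Homb(F,E)\otimes_A\Homb(E,E)\otimes_A H^\bullet E,$$
which is the claim (and the isomorphism so produced is natural in $F$).

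The step that needs genuine care is the convergence of the double- and triple-complex spectral sequences and the harmless identification of products with coproducts, and this is exactly where (S1) and (S2) enter: (S1) makes $E$ a bounded complex, and (S2) makes $\Homb(F,E)$ and $\Homb(E,E)$ finite-dimensional, so that $\uhom(F,E)$ has bounded homology and each $\uhom(F,E)^i$ is a finite direct sum — with this boundedness in place every spectral sequence above converges and each Künneth/exactness identification is legitimate. The only other point to stay vigilant about is keeping the left/right $A$-module structures straight along the whole chain of identifications, which is routine but must be checked in order to land on the correctly sided tensor products in the statement.
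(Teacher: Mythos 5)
Your proposal is correct and is essentially the paper's own argument: both proofs rest on the semisimplicity of $A$ making $\otimes_A$ and $D_A$ exact, the paper phrasing this by splitting $\uhom(F,E)$ (and $\uhom(E,E)$) as its homology plus a contractible complex, which is exactly the proof of the degenerate K\"unneth identifications you invoke. Your extra remarks on (S1)/(S2) guaranteeing boundedness and the product--coproduct identification are a sound elaboration of details the paper leaves implicit.
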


\begin{proof}
Since $A$ is semi-simple, we may write $\uhom(F,E) \cong H^\bullet \uhom (F,E) \oplus C$ as left $A$-modules, where $C$ is contractible.  We then find
\begin{eqnarray*}
H^\bullet \ulin_A(\uhom(F,E),\uhom(E,E) \otimes_A E) &\cong& H^\bullet \ulin_A(\Homb(F,E) \oplus C, \uhom(E,E) \otimes_A E) \\
&\cong& D_A \Homb(F,E) \otimes_A H^\bullet (\uhom(E,E) \otimes_A E) \\
&\cong& D_A \Homb(F,E) \otimes_A \Homb(E,E) \otimes_A H^\bullet E
\end{eqnarray*}
\end{proof}

We have the following generalization of \cite[Proposition 2.10]{Seidel01}

\begin{proposition}\label{proposition:Twist}
Let $E$ be a generalized $n$-spherical object for some $n > 0$, then the functors $T^*_E T_E$ and $T_E T^*_E$ are both naturally isomorphic to the identity functor on $\fK$. 
\end{proposition}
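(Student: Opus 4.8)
The plan is to follow the proof of \cite[Proposition 2.10]{Seidel01}, the only new ingredient being that $A\cong k^{r}$ is semisimple rather than equal to the ground field. The structural fact I would use throughout is that over a semisimple algebra every bounded-below complex of one-sided or two-sided $A$-modules is homotopy equivalent to its cohomology (with zero differential), and that $D_{A}$, $-\otimes_{A}-$ and $\ulin_{A}(-,E)$ are exact; this is precisely what legitimates the manipulations already used in the proof of Lemma \ref{lemma:Lin}. Since $\fK$ is the homotopy category of the complexes of Definition \ref{definition:K}, it suffices to produce a natural transformation between $T^{*}_{E}T_{E}$ and $\id$ (and, dually, between $\id$ and $T_{E}T^{*}_{E}$) --- for instance via an adjunction between $T_{E}$ and $T^{*}_{E}$, or directly as in \cite{Seidel01} --- and to check that it is a pointwise quasi-isomorphism, since a quasi-isomorphism between objects of $\fK$ is an isomorphism.

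First I would record the two defining triangles: for every $F\in\fK$ there is a triangle $\uhom(E,F)\otimes_{A}E\xrightarrow{\epsilon}F\to T_{E}F\to$ and a triangle $T^{*}_{E}F\to F\xrightarrow{\epsilon^{*}}\ulin_{A}(\uhom(F,E),E)\to$, with $\epsilon,\epsilon^{*}$ the canonical (co)evaluation maps, and I would check that $T_{E}$ and $T^{*}_{E}$ are exact functors on $\fK$, being assembled from $\uhom(E,-)$, $\uhom(-,E)$, $-\otimes_{A}E$, $\ulin_{A}(-,E)$ and the cone construction. Applying $T^{*}_{E}$ to the first triangle then exhibits $T^{*}_{E}T_{E}F$ as an iterated total complex of a double complex whose entries are built from $E$, $F$ and the Hom-complexes $\uhom(E,F)$, $\uhom(F,E)$, $\uhom(E,E)$; to put it into usable form one rewrites $T^{*}_{E}(\uhom(E,F)\otimes_{A}E)$ using the adjunction isomorphism $\uhom(V\otimes_{A}E,E)\cong\ulin_{A}(V,\uhom(E,E))$ and the reflexivity isomorphism $\ulin_{A}(\ulin_{A}(V,U),E)\cong V\otimes_{A}\ulin_{A}(U,E)$, both valid up to homotopy over the semisimple algebra $A$, just as in Lemma \ref{lemma:Lin}.

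The heart of the matter is the reduction of this double complex, which computes $H^{\bullet}(T^{*}_{E}T_{E}F)$. Using (S3) one replaces $\uhom(E,E)$ by its cohomology $A\oplus A[-n]$, and using the finiteness (S2) together with the semisimplicity of $A$ one replaces $\uhom(E,F)$ and $\uhom(F,E)$ by the finite-dimensional graded $A$-modules $\Homb(E,F)$ and $\Homb(F,E)$, discarding contractible summands. After these replacements $T^{*}_{E}T_{E}F$ is homotopy equivalent to a small total complex one of whose differentials is the composition pairing from $\Homb(F,E)$ and $\Homb(E,F)$ into $\Homb(E,E)$, projected onto the degree-$n$ summand $A[-n]$ and composed on both sides with the canonical (co)evaluation maps. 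Condition (S4) asserts exactly that this pairing is non-degenerate; since $A$ is semisimple and the spaces are finite-dimensional, this upgrades to the statement that the induced map $\Homb(F,E)\to D_{A}\Homb(E,F)$ (up to the degree shift by $n$) is an isomorphism of left $A$-modules, so the block of the total complex governed by this differential is acyclic and drops out. What remains is canonically $H^{\bullet}F$; tracing through the construction one sees that the natural transformation of the first paragraph is a pointwise quasi-isomorphism, and hence $T^{*}_{E}T_{E}\cong\id$ on $\fK$. The isomorphism $T_{E}T^{*}_{E}\cong\id$ follows from the dual computation, interchanging $T_{E}$ with $T^{*}_{E}$ and $\uhom(E,-)$ with $\uhom(-,E)$ and invoking the dual form of Lemma \ref{lemma:Lin}.

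The step I expect to be the main obstacle is this reduction: one must keep careful track of the left- and right-$A$-module structures through every occurrence of $D_{A}$, $\otimes_{A}$ and $\ulin_{A}(-,E)$, and then verify that the distinguished differential of the reduced double complex is genuinely invertible --- which is exactly where the generalized-spherical hypotheses are consumed, (S3) making the reduction to a finite double complex possible and (S4) making the crucial differential an isomorphism. This is also the point at which passing from $A=k$ to $A=k^{r}$ demands care; what saves the argument is that over a semisimple algebra every finite-dimensional module is projective, injective and reflexive, so that $D_{A}$ is exact, $-\otimes_{A}-$ and $\ulin_{A}(-,E)$ compute the corresponding derived functors, and each $\uhom$-complex that occurs is homotopy equivalent to its cohomology. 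A minor additional point, already flagged in the remark after the definition of $T_{E}$, is that one should first fix an equivalence $\Db\AA\cong\fK$ so that $T^{*}_{E}T_{E}F$ and $T_{E}T^{*}_{E}F$ genuinely lie in $\fK$.
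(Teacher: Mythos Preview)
Your proposal is correct and is exactly the approach taken in the paper: the paper's proof simply states that the argument is identical to that of \cite[Proposition 2.10]{Seidel01}, using Lemma \ref{lemma:Lin} to compute the relevant homologies. Your elaboration of how semisimplicity of $A$ replaces the assumption $A=k$ (so that complexes of $A$-modules split as cohomology plus contractible, $D_A$ is exact, and the pairing from (S4) becomes an isomorphism) is precisely the content behind that one-line reference.
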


\begin{proof}
The proof is identical to the proof in \cite[Proposition 2.10]{Seidel01} using Lemma \ref{lemma:Lin} to calculate the homologies.
\end{proof}

\subsection{Twist functors on derived categories}

We will now translate the previous result to the setting we will be using in this article.  Let $\AA$ be an Ext-finite abelian category.

\begin{definition}
An object $E \in \Ob \Db \AA$ is called \emph{generalized $n$-spherical} for some $n > 0$ if it satisfies the following properties
\begin{enumerate}
  \item[(S1)] $E$ has a finite resolution by $\Ind \AA$-injectives,
  \item[(S2)] $\Homb(E,F)$ and $\Homb(F,E)$ have finite total dimension for all $F \in \Db \AA$,
  \item[(S3)] $A \cong k^r$ as algebras and 
  $$\Hom^i_{\Db \AA}(E,E) \cong \left\{ \begin{array}{ll}
    A & \mbox{if $i=0,n$} \\
    0 & \mbox{otherwise}
  \end{array} \right.$$
  as left and right $A$-modules,
  \item[(S4)] the composition $\Hom^j(F,E) \times \Hom^{n-j}(E,F) \to \Hom^n(E,E)$ is non-degenerate for all $j \in \bZ$ and $F \in \Db \AA$.
\end{enumerate}
where $A = \Hom(E,E)$.
\end{definition}

The third condition implies that $E \cong \oplus_{i = 1}^r E_i$ where $\Hom(E_i,E_j) \cong k$ if $i=j$ and $0$ otherwise, and that for every $i$ there is a unique $j$ such that $\Ext^n(E_i,E_j) \cong k$.  Thus there is a permutation $\sigma$ of $\{ 1, 2 \ldots, r\}$ defined by $\Ext^n(E_i,E_{\sigma(i)}) \cong k$.

\begin{lemma}\label{lemma:nondegenerate}
Let $\AA$ be an Ext-finite abelian category, and $E \in \Ob \Db \AA$ satisfying (S1), (S2), and (S3).  Then (S4) is equivalent to:
\begin{enumerate}
	\item[(S4')] there is a permutation $\sigma$ of $\{ 1, 2 \ldots, r\}$ such that $\Hom(E_i,F) \cong \Hom(F,E_{\sigma(i)}[n])^*$, natural in $F \in \Db \AA$.
\end{enumerate}
\end{lemma}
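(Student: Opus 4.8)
The plan is to show each implication separately, using the decomposition $E \cong \bigoplus_{i=1}^r E_i$ coming from (S3) and the fact that $A \cong k^r$ acts by the idempotents $e_i$ projecting onto $E_i$. First I would observe that for any $F$, the pairing in (S4) decomposes along the idempotents: writing $\Hom^j(F,E) = \bigoplus_i \Hom^j(F,E_i)$ and $\Hom^{n-j}(E,F) = \bigoplus_i \Hom^{n-j}(E_i,F)$, the composition lands in $\Hom^n(E,E) = \bigoplus_i \Hom^n(E_i,E_{\sigma(i)})$ (using the permutation $\sigma$ defined just before the lemma), and the component of the composition in the $i$-th summand $\Hom^n(E_i,E_{\sigma(i)}) \cong k$ pairs $\Hom^j(F,E_{\sigma(i)})$ with $\Hom^{n-j}(E_i,F)$. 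So (S4) is equivalent to: for each $i$ and each $j$, the pairing
$$\Hom^j(F,E_{\sigma(i)}) \times \Hom^{n-j}(E_i,F) \longrightarrow \Hom^n(E_i,E_{\sigma(i)}) \cong k$$
is non-degenerate on both sides.

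\textbf{From (S4') to (S4).} Assuming (S4'), the natural isomorphism $\Hom(E_i,F) \cong \Hom(F,E_{\sigma(i)}[n])^*$ together with the triangulated structure gives, after shifting, natural isomorphisms $\Hom^{n-j}(E_i,F) \cong \Hom(E_i, F[n-j]) \cong \Hom(F[n-j], E_{\sigma(i)}[n])^* \cong \Hom^j(F,E_{\sigma(i)})^*$. One then needs to check that the non-degenerate pairing realizing this duality is indeed (up to a nonzero scalar) the composition pairing above; this is where I expect to spend the most care. The point is that a Serre-type duality isomorphism is, by its construction, compatible with composition — the counit $\Hom^n(E_i,E_{\sigma(i)}) \to k$ is exactly the trace/evaluation map — so the composition $\Hom^j(F,E_{\sigma(i)}) \times \Hom^{n-j}(E_i,F) \to \Hom^n(E_i,E_{\sigma(i)}) \cong k$ agrees with the canonical pairing $\Hom^j(F,E_{\sigma(i)}) \times \Hom^j(F,E_{\sigma(i)})^* \to k$ under the isomorphism of (S4'). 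Hence it is non-degenerate, giving (S4).

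\textbf{From (S4) to (S4').} Conversely, assume (S4). Fix $i$. Non-degeneracy of the pairing for $j=0$ and for general $F$ yields, for each $F$, an injection $\Hom(E_i,F) \hookrightarrow \Hom(F,E_{\sigma(i)}[n])^*$ (a nonzero $g \in \Hom^n(E_i,F)$ pairs nontrivially with some $f \in \Hom(F,E_{\sigma(i)})$) and, by the other half of non-degeneracy, also $\Hom(F,E_{\sigma(i)}[n])^* \hookrightarrow \Hom(E_i,F)$, hence a bijection — provided both spaces are finite-dimensional, which holds by (S2). This bijection is defined by the composition pairing, so it is automatically natural in $F$: for $\phi\colon F \to F'$ the compatibility $(\phi^* f)\circ g = f \circ (\phi_* g)$ inside $\Hom^n(E_i,E_{\sigma(i)})$ is just associativity of composition. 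Therefore $\Hom(E_i,-) \cong \Hom(-,E_{\sigma(i)}[n])^*$ naturally, which is (S4'). The main obstacle, as noted, is the bookkeeping identifying the abstract duality pairing of (S4') with the composition pairing; once that is pinned down via the counit/evaluation map, both directions are formal consequences of finite-dimensionality (S2) and associativity of composition.
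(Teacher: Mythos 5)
Your proposal is correct and follows essentially the same route as the paper's proof: decompose the pairing of (S4) along the idempotents of $A \cong k^r$ into component pairings $\Hom^j(F,E_{\sigma(i)}) \times \Hom^{n-j}(E_i,F) \to \Hom^n(E_i,E_{\sigma(i)}) \cong k$, and use the finite-dimensionality from (S2) to pass between non-degeneracy and the dual isomorphism of (S4'). The one point you rightly flag — that the abstract natural isomorphism in (S4') must agree with the composition pairing — is most cleanly settled by Yoneda: any natural transformation $\Hom(E_i,-) \to \Hom(-,E_{\sigma(i)}[n])^*$ is given by $g \mapsto \lambda(- \circ g)$ for a fixed functional $\lambda$ on the one-dimensional space $\Hom^n(E_i,E_{\sigma(i)})$, so its being an isomorphism forces the composition pairing to be non-degenerate.
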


\begin{proof}
First assume that (S4) holds.  As above, this yields a non-degenerate pairing
$$\oplus_i \Hom(E_i,F) \times \oplus_i \Hom(F,E_{\sigma(i)}[n]) \to \oplus_i \Hom(E_i,E_{\sigma(i)}[n]).$$
From this it easily follows that the pairing 
$$\Hom(E_i,F) \times \Hom(F,E_{\sigma(i)}[n]) \to \Hom(E_i,E_{\sigma(i)}[n]) \cong k.$$
is nondegenerate as well.  This shows that $\Hom(F,E_{\sigma(i)}[n]) \cong \Hom(E_i,F)^*$.  The naturality is easily checked.

For the other direction, we see that the pairing
$$\oplus_i \Hom(E_i,F) \times \oplus_i \Hom(F,E_{\sigma(i)}[n]) \to \oplus_i \Hom(E_i,E_{\sigma(i)}[n])$$
obtained from (S4') is the one required in (S4).
\end{proof}

We now come to the main theorem of this section.

\begin{theorem}\label{theorem:TwistDerived}
Let $\AA$ be an Ext-finite abelian category, and let $E \in \Ob \AA$ be a generalized $n$-spherical object, then the twist functors $T_E$ and $T^*_E$ are quasi-inverses.
\end{theorem}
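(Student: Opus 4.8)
The plan is to deduce the statement from the corresponding result on the model $\fK$, namely Proposition \ref{proposition:Twist}, by transporting everything along the exact equivalence $\Phi \colon \Db \AA \to \fK$ fixed in the previous subsection (the one canonical up to natural isomorphism). Recall from the Remark following the definition of $T_E$ and $T^*_E$ on $\fK$ that $T_{\Phi E}$ and $T^*_{\Phi E}$ need not literally preserve $\fK$, but that the complexes $T_{\Phi E}F$ and $T^*_{\Phi E}F$ still satisfy the homological conditions of Definition \ref{definition:K}; using $\Phi$ we may therefore regard $T_{\Phi E}$ and $T^*_{\Phi E}$ as endofunctors of $\Db \AA$, and this is precisely how $T_E := \Phi^{-1} T_{\Phi E} \Phi$ and $T^*_E := \Phi^{-1} T^*_{\Phi E} \Phi$ are to be understood. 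Granting this, once we know that $\Phi(E)\in \Ob\fK$ is generalized $n$-spherical in the sense of the $\fK$-definition, Proposition \ref{proposition:Twist} provides natural isomorphisms $T^*_{\Phi E} T_{\Phi E} \cong \id_\fK$ and $T_{\Phi E} T^*_{\Phi E} \cong \id_\fK$, whence $T^*_E T_E \cong \id_{\Db \AA}$ and $T_E T^*_E \cong \id_{\Db \AA}$; this is exactly the assertion that $T_E$ and $T^*_E$ are quasi-inverses.

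So the heart of the argument is to check that the four conditions defining a generalized $n$-spherical $E \in \Ob \Db \AA$ translate, under $\Phi$, into the four conditions defining a generalized $n$-spherical object of $\fK$. Condition (S1) in the derived setting — that $E$ admits a finite resolution by $\Ind \AA$-injectives — says precisely that the image of $E$ in $D^+(\Ind \AA)$ is isomorphic to a bounded complex of injectives, i.e. that $\Phi(E)$ may be chosen to be a bounded complex; in particular this verifies both that $\Phi(E)$ genuinely lies in $\Ob \fK$ and that it satisfies (S1) there. For (S2), (S3) and (S4) we use that $\Phi$ is an exact equivalence, so that $\Homb_\fK(\Phi E, F) \cong \Homb_{\Db \AA}(E, \Phi^{-1}F)$ for $F \in \fK$, compatibly with composition and with the graded $A$-module structure, where $A = \End(E) \cong \End_\fK(\Phi E)$. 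Hence the finite-dimensionality required in (S2), the computation of $\Hom^i_\fK(\Phi E, \Phi E)$ required in (S3) (including the identification of $A$ with $k^r$ as algebras), and the non-degeneracy of the composition pairing required in (S4) all follow verbatim from the corresponding statements for $E$ in $\Db \AA$.

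The steps are therefore: (1) fix $\Phi$ and adopt the interpretation of $T_E, T^*_E$ on $\Db \AA$ via the Remark; (2) use (S1) to see $\Phi(E) \in \Ob \fK$ with (S1) satisfied there; (3) transfer (S2)–(S4) across $\Phi$, concluding that $\Phi(E)$ is generalized $n$-spherical in $\fK$; (4) invoke Proposition \ref{proposition:Twist} and conjugate the two natural isomorphisms back through $\Phi$. The only genuinely delicate point — the one I would be most careful about — is steps (1)–(2): one must confirm that the constructions entering $T_{\Phi E}F$ and $T^*_{\Phi E}F$ (the tensor product $\ \otimes_A E$, the functor $\ulin_A(-,E)$, and the relevant mapping cones) make sense in $\Ind \AA$ and that, although the outputs may leave $K^+(\Ind \AA)$, they still meet the conditions of Definition \ref{definition:K}, so that the conjugated functors are well-defined endofunctors of $\Db \AA$, independent up to natural isomorphism of the choice of $\Phi$. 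Everything else is a formal consequence of $\Phi$ being an equivalence, with the substantive homological computation already carried out in Proposition \ref{proposition:Twist} via Lemma \ref{lemma:Lin}.
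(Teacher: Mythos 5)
Your proposal is correct and follows exactly the paper's intended route: the paper's proof of Theorem \ref{theorem:TwistDerived} consists of the single sentence that it is a reformulation of Proposition \ref{proposition:Twist}, and your argument simply makes explicit the transport of the conditions (S1)--(S4) across the fixed equivalence $\Db \AA \cong \fK$ that this reformulation tacitly relies on. The point you flag as delicate (well-definedness of $T_E$, $T^*_E$ on $\Db\AA$ despite the output possibly leaving $K^+(\Ind\AA)$) is precisely what the paper's Remark after the definition of the twist functors addresses, so nothing is missing.
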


\begin{proof}
This is just a reformulation of Proposition \ref{proposition:Twist}.
\end{proof}

Note that $D_A \RHom(X,E) \cong \RHom(X,E)^*$ as right $A$-modules.  Writing $E \cong \oplus_{i = 1}^r E_i$ as above gives
\begin{eqnarray*}
\RHom(E,X) \otimes_A E &\cong& \oplus_{i=1}^r \RHom(E_i,X) \otimes_k E_i\\ 
\RHom(X,E)^* \otimes_A E &\cong& \oplus_{i=1}^r \RHom(X,E_i)^* \otimes_k E_i.
\end{eqnarray*}

In the rest of this article, we will be interested only in the case where $\AA$ is an Ext-finite abelian hereditary category with Serre duality.  In this case, the conditions (S1) and (S2) are automatic and the map $\sigma$ from Lemma \ref{lemma:nondegenerate} is induced by the Auslander-Reiten translate, i.e. $\t E_i \cong E_{\sigma (i)}$.  The conditions (S3) and (S4') then are equivalent to $\Hom(E,E) \cong k^r$ as algebras and $\bS E \cong E[1]$.  An example is given by $E \cong \oplus_{i=1}^r \t^i E_0$ where $\t^{r} E_0 \cong E_0$, and $\Hom(\t^i E_0, \t^j E_0) \cong k$ if and only if $i \equiv j \pmod r$ and $0$ otherwise (in particular $\Hom(E_0,E_0) \cong k$).  Every generalized 1-spherical object is a direct sum of objects of this form.

These conditions are satisfied, for example, when $E_0$ is a peripheral object of a generalized standard tube (see \S\ref{section:Tubes}), thus an Auslander-Reiten component of the form $\bZ A_\infty / \langle \tau^r \rangle$ for $r> 0$.  It will follow from Theorem \ref{theorem:TubeCriterium} that every direct summand of a generalized 1-spherical object in a hereditary category with Serre duality lies in a tube.

\begin{remark}
A generalized 1-spherical object in $\Db \AA$ is an example of a Calabi-Yau object in the sense of \cite{CibilsZhang09}.
\end{remark}

\begin{remark}
Let $\AA$ be a hereditary Ext-finite category.  Let $X,Y \in \Db \AA$ and write $A = \End(X)$.  There are triangles
$$T_X Y [-1] \to \RHom(X,Y) \otimes_A X \to Y \to T_X Y$$
and
$$T^*_X Y \to Y \to D_A \RHom(Y,X) \otimes_A X \to T^*_X Y [1].$$
\end{remark}

\begin{example}
Let $\AA = \nilp \tilde{A}_1$ where $\tilde{A}_1$ has cyclic orientation, and denote by $S_1$ and $S_2$ the two simple objects in $\ind \AA$.  An object $E \cong S_1 \oplus S_2$ is a generalized 1-spherical object.  Note that $A = \Hom(E,E) \cong k^2$ as algebras and $\Ext^1(E,E) \cong A$ as a left and right $A$-module, but not as an $A$-bimodule.
\end{example}
\section{Tubes}\label{section:Tubes}

Throughout, let $\AA$ be an indecomposable Ext-finite hereditary abelian category with Serre duality.  In this section, we will be interested in the stable components $\KK$ of the Auslander-Reiten quiver of $\AA$ or $\Db \AA$ of the form $\bZ A_\infty / \langle \tau^r \rangle$, called a \emph{tube} and we will refer to $r$ as the \emph{rank} of the tube.  If $r=1$, then $\KK$ is called a \emph{homogeneous tube}.

An indecomposable object $X \in \ind \AA$ is called \emph{peripheral} if the middle term $M$ in the almost split sequence $0 \to \t X \to M \to X \to 0$ is indecomposable.  The number of isomorphism classes of peripheral objects in a tube is given by the rank.

As usual, we will say a component is \emph{generalized standard} if $\infrad(X,Y)=0$, for all $X,Y \in \ind \KK$.  The generalized standard tubes occur, for example, in the category of finite dimensional representations of tame algebras and in the category of coherent sheaves on a smooth projective curve.

Recall that we will say that a $t$-structure on $\Db \AA$ is $\t$-invariant if the heart $\HH[0] \subset \Db \AA$ is invariant under $\t$.  If $\HH$ is hereditary, then the standard $t$-structure is $\t$-invariant if and only if $\HH$ does not have nonzero projective or injective objects.

The following two theorems are the main results of this section.

\begin{theorem}\label{theorem:TubeCriterium}
Let $\AA$ be a hereditary abelian category with Serre duality.  An Auslander-Reiten component in $\Db \AA$ is a tube if and only if it contains an indecomposable object $X$ such that $\t^r X \cong X$, for $r \geq 1$.
\end{theorem}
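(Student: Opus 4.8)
The forward implication is immediate: a tube $\bZ A_\infty / \langle \tau^r \rangle$ is by construction a quotient on which $\t^r$ acts as a functor isomorphic to the identity, so every indecomposable object in it is $\t^r$-periodic (and if the rank is $r_0$ one may even take $r=r_0$). So the content is the converse.

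Suppose $\KK$ is the Auslander-Reiten component of $\Db \AA$ containing an indecomposable $X$ with $\t^r X \cong X$ for some $r \geq 1$. Since $\AA$ is hereditary, $X \cong Y[m]$ for an indecomposable $Y \in \AA$; replacing $X$ by $Y$ and $\KK$ by the shifted component $\KK[-m]$ (shifts are autoequivalences commuting with $\t$) we may assume $X \in \AA[0]$, though this reduction is only a convenience. The first step is to promote the periodicity of the single object $X$ to periodicity of every object of $\KK$: applying the autoequivalence $\t^r$ to the almost split triangle $\t Z \to M_Z \to Z \to \t Z[1]$ of an indecomposable $Z \in \ind \KK$ and using uniqueness of almost split triangles shows that, whenever $\t^r Z \cong Z$, the functor $\t^r$ permutes the (finitely many) indecomposable summands of $M_Z$ and likewise the finite set of immediate successors of $Z$; hence every Auslander-Reiten neighbour of $Z$ is again $\t$-periodic. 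As $\KK$ is connected and locally finite (each $M_Z$ is a genuine finite direct sum because $\AA$ is Hom-finite), induction gives that every indecomposable in $\KK$ is $\t$-periodic.

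Now I would invoke the structure theory of stable translation quivers (Riedtmann; Happel-Preiser-Ringel): a connected stable translation quiver all of whose vertices are $\t$-periodic is isomorphic to $\bZ \Delta / \Pi$, where the tree class $\Delta$ lies in $\{A_n, D_n, E_6, E_7, E_8, A_\infty, D_\infty, A_\infty^\infty\}$ and $\Pi$ is a nontrivial admissible group of automorphisms containing a positive power of $\t$. The task is then to show that, for $\Db \AA$ the bounded derived category of a hereditary category with Serre duality, only $\Delta = A_\infty$ with $\Pi = \langle \t^n \rangle$ survives. If $\Delta$ is a finite Dynkin diagram, then $\bZ \Delta / \langle \t^n \rangle$ is a finite translation quiver, so $\KK$ is a finite component; but $\Db \AA$ has no finite Auslander-Reiten component (already the distinct shifts $X, X[1], X[2], \dots$ of a nonzero object show $\Db \AA$ is not of finite representation type, and a finite component would make it so) --- contradiction. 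If $\Delta \in \{D_\infty, A_\infty^\infty\}$, then $\KK$ has a vertex with at least three Auslander-Reiten neighbours (resp.\ its underlying tree is two-sided infinite); here one must feed in the homological hypotheses, deriving a contradiction from the restrictions that heredity and Serre duality place on the local shape of almost split triangles --- concretely, by analysing the Hom-hammocks emanating from such a vertex and using Serre duality to exhibit indecomposables whose $\Hom$- and $\Ext$-dimensions are incompatible. This leaves $\Delta = A_\infty$, and any admissible $\Pi$ acting on $\bZ A_\infty$ and containing a positive power of $\t$ is necessarily $\langle \t^n \rangle$ for some $n \geq 1$; thus $\KK$ is a tube (of rank dividing $n$).

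The main obstacle is precisely the elimination of the $D_\infty$ and $A_\infty^\infty$ tree classes: the purely combinatorial structure theorem permits them, so they can only be excluded by exploiting that $\Db \AA$ is hereditary and has Serre duality, via a somewhat delicate analysis of almost split triangles and Hom-hammocks. An alternative that partly avoids this is the ``hands-on'' route: starting from $X$, build in $\AA$ the rays and corays of the expected tube by constructing chains of monomorphisms and epimorphisms whose successive kernels and cokernels are among the $\t$-translates of $X$, thereby reconstructing the mesh pattern of $\bZ A_\infty$ directly and reading off the $\langle \t^s \rangle$-periodicity from the minimal period $s$ of $X$. Either way, one should be careful not to invoke the later statement that direct summands of generalized $1$-spherical objects lie in tubes, as that statement is deduced from the present theorem.
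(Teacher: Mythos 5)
Your proposal has a genuine gap at exactly the point you flag as ``the main obstacle'': the exclusion of the tree classes $D_\infty$ and $A_\infty^\infty$ is announced but never carried out. Saying that one must ``analyse the Hom-hammocks'' and ``exhibit indecomposables whose $\Hom$- and $\Ext$-dimensions are incompatible'' is a statement of intent, not an argument, and this is precisely the step that carries the mathematical content of the converse. There are two further soft spots in the same part of the argument. First, the Happel--Preiser--Ringel structure theorem you invoke requires a (sub)additive function on the stable translation quiver; in a module category one takes composition length, but for an arbitrary Ext-finite hereditary $\AA$ and its derived category you would have to produce such a function (e.g.\ from $\dim\Hom$ into a periodic object), and this is not addressed. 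Second, the elimination of the finite Dynkin tree classes rests on ``a finite component would make $\Db\AA$ of finite representation type,'' which is an Auslander-type theorem whose validity in this generality is itself something to be proved, not quoted. The preliminary step --- propagating $\t$-periodicity from $X$ to the whole component via almost split triangles --- is fine.

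For comparison, the paper's proof (Lemma \ref{lemma:TubeCriterium}) avoids translation-quiver combinatorics entirely. It first tilts to a $\t$-invariant heart $\HH$ containing $X$, then runs an induction on $d=\sum_{i=0}^{r-1}\dim\Hom(X,\t^iX)$: if $d>1$ one picks a radical map $f\colon X\to\t^jX$ that is an eigenvector for the automorphism induced by $\t^r$ and replaces $X$ by $X_1=\im f$, which is again $\t$-periodic, admits paths to and from $X$, and has strictly smaller $d$. The induction terminates at an object $S$ with $\sum_i\dim\Hom(S,\t^iS)=1$, so that $\oplus_i\t^iS$ is a generalized $1$-spherical object; Proposition \ref{proposition:SphericalInTube} (whose proof uses only the twist-functor machinery and Lemma \ref{lemma:TauInvariant}, not the present theorem, so there is no circularity of the kind you were worried about) places $S$ in a generalized standard tube equivalent to $\nilp\tilde{A}_n$, and the convexity statement of Theorem \ref{theorem:TubeMain}(2) --- already established for standard tubes --- forces $X$ into the same component. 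If you want to salvage your route, you must either supply the subadditive function and the hammock analysis excluding $D_\infty$ and $A_\infty^\infty$, or switch to an argument of the paper's type that manufactures a $1$-spherical object inside the component.
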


\begin{theorem}\label{theorem:TubeMain}\label{theorem:SimpleTube}\label{theorem:DirectingTubes}
Let $\AA$ be a hereditary abelian category with Serre duality and let $\KK$ be a tube in $\Db \AA$.  Then
\begin{enumerate}
  \item $\KK$ is generalized standard,
  \item $\KK$ is convex in the sense that if there is a path $X_0 \to \cdots \to X_n$ in $\Db \AA$ with $X_0, X_n \in \KK$, then $X_i \in \KK$ for all $i$.
  \item There exists a $\t$-invariant $t$-structure on $\Db \AA$ with hereditary heart $\HH \supseteq \KK$ such that the peripheral objects of $\KK$ are simple in $\HH$.
\end{enumerate}
\end{theorem}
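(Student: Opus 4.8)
The plan is to prove the three assertions in sequence, using the twist functors from \S\ref{section:TwistFunctors} as the main engine and bootstrapping from the existence of a peripheral object in the tube. Fix a tube $\KK$ of rank $r$ in $\Db\AA$ and let $E_0$ be one of its peripheral objects. Since $\KK\cong \bZ A_\infty/\langle\tau^r\rangle$, the ray of $E_0$ is totally ordered by irreducible monomorphisms, the coray by irreducible epimorphisms, and the almost split sequences exhibit each indecomposable of $\KK$ as built from the peripheral objects $E_0,\t E_0,\dots,\t^{r-1}E_0$. The first reduction is to check that $E:=\bigoplus_{i=0}^{r-1}\t^i E_0$ is a generalized $1$-spherical object in the sense of \S\ref{section:TwistFunctors}: by the local $\bZ A_\infty/\langle\tau^r\rangle$ combinatorics one has $\Hom(\t^iE_0,\t^jE_0)\cong k$ if $i\equiv j\pmod r$ and $0$ otherwise, $\End(E)\cong k^r$, and Serre duality gives $\bS E_0\cong \t E_0[1]$, hence $\bS E\cong E[1]$; this is exactly the example flagged in the text after Theorem \ref{theorem:TwistDerived}. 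So $T_E$ and $T_E^*$ are quasi-inverse autoequivalences of $\Db\AA$, and they restrict to a permutation of the rays/corays of $\KK$ (tubular mutation).

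\textbf{(1) Generalized standardness.} I would argue that $\infrad(X,Y)=0$ for $X,Y\in\ind\KK$ by reducing to the peripheral case and then to a finite-dimensionality statement that $T_E$-invariance makes accessible. Concretely: any morphism in $\rad^\infty(X,Y)$ inside a tube factors, via the mesh relations, through arbitrarily long paths along a fixed ray; composing with the irreducible maps of the ray and using that $\Hom(\t^iE_0,X)$ is finite dimensional (Ext-finiteness) forces such a morphism to vanish, since an infinite descending chain of subobjects (or quotients) in the finite-dimensional space $\Hom(E_0,X)$ must stabilize. The technical point — that $\rad^\infty$ maps out of $\KK$ are controlled by maps to/from the peripheral objects — is where I expect to lean on the twist functor: $T_E$ sends peripheral objects to peripheral objects and commutes with $\t$, so it suffices to verify the vanishing on a single chosen ray, which is a purely $\bZ A_\infty$ computation.

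\textbf{(2) Convexity.} Suppose $X_0\to X_1\to\cdots\to X_n$ is a path in $\Db\AA$ with $X_0,X_n\in\KK$; I want $X_i\in\KK$ for all $i$. The key is that $E=\bigoplus\t^iE_0$ is generalized $1$-spherical, so by (S4$'$) (Lemma \ref{lemma:nondegenerate}) we have $\Hom(E_j,F)\cong\Hom(F,\t E_j[1])^*$ naturally in $F$; thus for any $F$, the object $F$ "sees" $\KK$ on the left exactly when it sees it on the right, and the numerical function $F\mapsto \sum_j\dim\Hom(E_j,F)$ behaves additively on triangles. Using the triangles for $T_EF$ and $T_E^*F$ recorded in the last Remark of \S\ref{section:TwistFunctors}, one shows that if $F\notin\KK$ but receives a nonzero map from $\KK$ and maps nonzero into $\KK$, then applying $T_E$ strictly decreases a suitable nonnegative invariant built from $\dim\Homb(E,F)$, which is impossible. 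Hence no $X_i$ can escape $\KK$ once both endpoints lie in $\KK$; the inductive bookkeeping is the routine part, and the main obstacle is choosing the invariant so that it is simultaneously $T_E$-monotone and finite — I expect $\sum_{j=0}^{r-1}\dim\Homb(E_j,F)$ together with a "winding number" counting net passages through $\KK$ to do the job.

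\textbf{(3) The $\t$-invariant $t$-structure with peripheral objects simple.} Here the idea is to manufacture an aisle from $\KK$ and appeal to Proposition \ref{proposition:TauInvariant}. Let $\UU$ be the smallest full additive Krull--Schmidt subcategory of $\Db\AA$ containing $\KK[1]$ and closed under successors (closure under suspended paths). By part (2), $\UU$ meets the original heart $\AA[0]$ in a controlled way and, crucially, $\KK\subseteq\UU$ but $\t^{-1}\KK[1]\not\subseteq\UU$ fails — more precisely $\UU$ is $\t$-invariant because $\KK$ is and successor-closure commutes with the autoequivalence $\t$. One checks $\UU\neq 0$ and $\UU^\perp\neq 0$ (the latter since $\KK$ is a proper component and convexity (2) prevents $\UU$ from swallowing everything), so Proposition \ref{proposition:TauInvariant} yields a $\t$-invariant $t$-structure whose heart $\HH$ is hereditary, derived equivalent to $\AA$, and contains $\KK$. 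It remains to arrange that the peripheral objects of $\KK$ are \emph{simple} in $\HH$: if some peripheral $E_i$ has a proper subobject $S$ in $\HH$, then $S\to E_i$ is a monomorphism in $\HH$, hence (by heredity and the position of $\KK$ in $\HH$) either $S\in\KK$ on the coray of $E_i$ — contradicting peripherality, which says $E_i$ is the bottom of its ray and top of its coray simultaneously — or $S$ has a summand outside $\KK$ mapping into $E_i$, contradicting convexity. Thus $E_i$ is simple. The delicate step, and the one I'd budget the most care for, is verifying $\UU^\perp\neq 0$ and that the chosen $\UU$ is genuinely $\t$-invariant rather than merely $\t$-stable up to shift; both hinge on the convexity established in (2), which is why I would prove the three parts strictly in the stated order.
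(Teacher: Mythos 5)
There is a genuine gap, and it is circular at the root. Your entire argument rests on the opening claim that $E=\bigoplus_{i=0}^{r-1}\t^iE_0$ is generalized $1$-spherical because ``by the local $\bZ A_\infty/\langle\tau^r\rangle$ combinatorics one has $\Hom(\t^iE_0,\t^jE_0)\cong k$ if $i\equiv j\pmod r$ and $0$ otherwise.'' The shape of the Auslander--Reiten component only controls compositions of irreducible maps modulo mesh relations; it says nothing about morphisms in $\infrad$. The statement $\End(E)\cong k^r$ together with the prescribed $\Hom$-spaces between the $\t^iE_0$ \emph{is} (the peripheral case of) generalized standardness, i.e.\ it is essentially part (1) of the theorem, which is exactly what has to be proven. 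Your proof of (1) then ``leans on the twist functor $T_E$,'' whose invertibility requires $E$ to be $1$-spherical --- so (1) is used to prove (1). The paper avoids this by a careful ordering: it proves (3) and (2) \emph{under the assumption} that $\KK$ is generalized standard, and then establishes (1) by a separate induction (Lemma \ref{lemma:TubeCriterium}) on $d=\sum_i\dim\Hom(X,\t^iX)$, producing a $1$-spherical object by repeatedly passing to the image of a radical eigenvector $f\in\rad(X,\t^jX)$, with no twist functor needed until a genuine spherical object is in hand. Some replacement for that induction is indispensable; your sketch of (1) (``an infinite descending chain in a finite-dimensional $\Hom$-space must stabilize'') does not engage with where an infinite-radical map between tube objects could actually live.

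Two further points. For (2), your proposed ``$T_E$-monotone invariant'' is never specified and I do not see one that works; the paper's argument is different and concrete: after tilting so that $\KK$ is a simple tube, an incoming map from $X_1\notin\KK$ and an outgoing map to $X_2\notin\KK$ produce quotients/subobjects of arbitrarily large $\KK$-length (Lemma \ref{lemma:SphericalLength}), forcing $\dim\Hom(\t^kX_1,X_2)$ to be unbounded, contradicting Hom-finiteness (Proposition \ref{proposition:SimplesInTubes}). For (3), your final step misapplies convexity: a subobject $S\notin\KK$ of a peripheral $E_i$ gives only a path \emph{into} $\KK$, which convexity does not forbid. The paper instead shows directly that the evaluation map $\Hom(E,X)\otimes_A E\to X$ is a monomorphism for every $X$ in the heart (Lemma \ref{lemma:TauInvariant}), using that $T_E$ is an equivalence and a path-chasing contradiction; that is the correct mechanism for simplicity of the peripheral objects.
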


A tube in a hereditary abelian category where the peripheral objects are all simple (as in Theorem \ref{theorem:TubeMain}(3)) will be called a \emph{simple tube}.  Note that a simple tube is generalized standard (This follows from Proposition \ref{proposition:SphericalInTube}; alternatively this can easily be shown using the additivity of $\infrad(X,-)$ and $\infrad(-,Y)$ on Auslander-Reiten sequences together with $\infrad(S_i,S_j) = 0$ for all simple objects).

For the proof of Theorem \ref{theorem:TubeMain} we will first show that both (2) and (3) hold under the assumption that $\KK$ is standard, and conclude by proving (1).

\begin{lemma}\label{lemma:TauInvariant}
Let $E$ be a generalized 1-spherical object in $\AA$.  There is a $\t$-invariant $t$-structure on $\Db \AA$ such that $E$ is a semi-simple object in the heart $\HH$.
\end{lemma}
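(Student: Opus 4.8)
The plan is to build a $\tau$-invariant aisle directly from the generalized $1$-spherical object $E$ and invoke Proposition~\ref{proposition:TauInvariant}. Write $E \cong \oplus_{i=1}^{r} E_i$ with $\tau E_i \cong E_{\sigma(i)}$ as in the discussion following Theorem~\ref{theorem:TwistDerived}; since $\bS E \cong E[1]$, i.e. $\tau E \cong E$, the set of direct summands $\{E_1,\dots,E_r\}$ is closed under $\tau$. First I would let $\UU$ be the smallest full additive Krull-Schmidt subcategory of $\Db\AA$ that contains $E_1,\dots,E_r$ and is closed under successors (the closure under suspended paths described in \S\ref{subsection:Splits}). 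Because $\{E_i\}$ is $\tau$-stable and $\tau$ is an autoequivalence preserving the existence of paths (it preserves $\Hom$-spaces), the subcategory $\UU$ is automatically $\tau$-invariant: $\tau\UU$ is again closed under successors and contains all the $E_i$, hence equals $\UU$. By the remark in \S\ref{subsection:Splits}, $\UU$ is an aisle.

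Next I would check that $\UU$ and $\UU^{\perp}$ are both nonzero. Nonzero-ness of $\UU$ is clear since it contains $E$. For $\UU^{\perp}$ one uses Serre duality: an object $F$ lies in $\UU^{\perp}$ as soon as there is no suspended path from any $E_i$ to $F$, and by the block structure (Lemma 5 of \cite{Ringel05} as quoted) this is a genuine restriction — for instance $\bS^{-1}E_i[-1] \cong \tau^{-1}E_i \cong E_{\sigma^{-1}(i)}$ shows $E$ itself would have to be "in the past" of $E$, which combined with (S3) ($\Hom^{<0}(E,E)=0$ and $\Hom^{j}(E,E)=0$ for $j\neq 0,1$) prevents $\UU$ from swallowing everything; concretely, $E[-2] \notin \UU$ because $\Hom^{j}(E_i, E_k[-2]) = \Hom^{j-2}(E_i,E_k) = 0$ for all $j \le 0$ by (S3), so no suspended path from $E_i$ reaches $E[-2]$. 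Then Proposition~\ref{proposition:TauInvariant} applies: the heart $\HH$ of the associated $t$-structure is hereditary, derived equivalent to $\AA$, and has no projective objects.

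It remains to see that $E$ is semi-simple in $\HH$, i.e. each $E_i$ is simple in $\HH$. Since $E_i \in \UU \subseteq \Db^{\le 0}$ and $\UU$ is closed under successors while $\Hom^{<0}(E,E) = 0$, I would argue $E_i$ actually lies in the heart $\HH = \UU \cap \UU[1]^{\perp}$: it is in $\UU$, and it is in $\UU[1]^{\perp}$ because any suspended path from $E_j[1]$ to $E_i$ would give a nonzero element of $\Hom(E_j[1],E_i) = \Hom^{-1}(E_j,E_i) = 0$ or a longer path forcing $E_i$ into $\UU[1]$, contradicting $\Hom^{j}(E,E) = 0$ for $j < 0$. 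For simplicity of $E_i$ in $\HH$: a proper subobject $0 \neq X \subsetneq E_i$ in $\HH$ yields a monomorphism $X \to E_i$, hence a nonzero map, so $X$ (or rather an indecomposable summand of it) admits a suspended path to $E_i$ and thus lies in $\UU$; the short exact sequence $0 \to X \to E_i \to Y \to 0$ in $\HH$ then forces, via the long exact $\Hom$-sequence and the $1$-dimensionality $\Hom(E_i,E_i) = \Hom_{\HH}(E_i,E_i) = k$ together with the non-degeneracy (S4), that $\End_{\HH}(E_i)$ is a field and $E_i$ has no quotients or subs other than $0$ and itself. I would phrase this last part using that $\Hom_{\HH}(E_i,E_i) \cong \Hom_{\Db\AA}(E_i,E_i) \cong k$ (no higher self-extensions interfere since any extension of $E_i$ by $E_i$ in $\HH$ corresponds to a class in $\Ext^1_{\HH}(E_i,E_i) = \Hom^1_{\Db\AA}(E_i,E_i)$, and the permutation $\sigma$ shows this is $k$ precisely when $\sigma(i) = i$, but even then the object is indecomposable only if it is a non-split extension, which is compatible with $E_i$ being simple in the Gabriel sense).

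**Main obstacle.** The genuinely delicate step is the simplicity claim, because "$E_i$ admits a nonzero map to $E_i$" only tells us a summand of a subobject lies in $\UU$, not that the subobject itself is $0$ or $E_i$. The clean way around this is to observe that the construction of $\UU$ can be refined: one shows $E_i$ is a \emph{directing} simple object in $\HH$ by computing that every indecomposable $X \in \HH$ with $\Hom_{\HH}(X,E_i) \neq 0$ and $\Hom_{\HH}(E_i,X) \neq 0$ must be isomorphic to $E_i$ — which follows from (S4) (non-degeneracy of the pairing $\Hom(X,E) \times \Hom(E,X) \to \Hom(E,E[1])$, together with $\Hom^1(E,E) \cong k^r$) combined with the heredity of $\HH$. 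I expect this to be the technical heart of the argument, and it is presumably where the generalized-standardness of tubes (which Theorem~\ref{theorem:TubeMain} is heading toward) first enters; the non-degeneracy condition (S4) is exactly the input that makes it work.
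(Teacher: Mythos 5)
Your construction of the aisle is exactly the paper's: $\UU$ is the closure under successors of the $\t$-stable set $\{E_1,\dots,E_r\}$, it is $\t$-invariant, and Proposition \ref{proposition:TauInvariant} gives a hereditary heart $\HH$ derived equivalent to $\AA$. (Two small points there: $\UU^\perp\neq 0$ is best seen from heredity of $\AA$ --- a suspended path starting in $\AA[0]$ never reaches $\AA[n]$ with $n<0$ --- whereas your computation that $\Hom^j(E_i,E_k[-2])=0$ does not by itself exclude a suspended path through intermediate objects.) The genuine gap is the step you yourself flag as the main obstacle: simplicity of each $E_i$ in $\HH$. Your argument only yields $\End_\HH(E_i)\cong k$, and a brick need not be simple; the appeal to (S4) and ``directedness'' in your final paragraph is not an argument, and it is not where the proof actually comes from.

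The missing idea is to use the twist functor. Since $E$ is generalized $1$-spherical, $T_E$ is an autoequivalence of $\Db\AA$ by Theorem \ref{theorem:TwistDerived}, and $T_E^{-1}(E)\cong E$. For an indecomposable $X\in\HH$ the triangle defining $T_EX$ gives an exact sequence in $\HH$
$$0 \to H^{-1}T_E(X) \to \Hom(E,X)\otimes_A E \stackrel{\epsilon}{\to} X \to H^{0}T_E(X) \to \Ext(E,X)\otimes_A E \to 0,$$
and since $\HH$ is hereditary and $T_EX$ is indecomposable, at most one of $H^{-1}T_EX$ and $H^{0}T_EX$ is nonzero. If $\epsilon$ were not a monomorphism, then $H^{-1}T_EX\neq 0$ lies in $\HH$, so there is a path from $E$ to $T_EX[-1]$; applying $T_E^{-1}$ produces a path from $E$ to $X[-1]$, contradicting $X\in\HH$ and the definition of $\UU$. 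Hence $\epsilon$ is a monomorphism for every indecomposable $X\in\HH$; in particular every nonzero map $E_i\to X$ is a monomorphism (it is the restriction of $\epsilon$ to a direct summand of the source). Applying this to an indecomposable direct summand of a proper nonzero quotient of $E_i$ gives a nonzero non-injective map out of $E_i$, a contradiction, so $E_i$ is simple. Without the autoequivalence $T_E$ (i.e.\ without Proposition \ref{proposition:Twist}) I do not see how to close your argument.
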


\begin{proof}
Consider the $\tau$-invariant aisle $\UU$ given by
$$\ind \UU = \{ X \in \ind \Db \AA \mid \mbox{There is a path from $E$ to $X$} \}.$$
We denote $\HH = \UU \cap \UU[1]^\perp$ the heart of the corresponding $t$-structure.  Proposition \ref{proposition:TauInvariant} shows that $\HH$ is hereditary, derived equivalent with $\AA$, and has no nonzero projective objects.

There is a twist functor $T_E : \Db \AA \longrightarrow \Db \AA$ with 
$$T_E(X) \cong \cone(\RHom(E,X) \otimes_A E \longrightarrow X)$$
where $A = \End(E)$.  Since $E$ is 1-spherical, this functor is an autoequivalence of $\Db \AA$ (see theorem \ref{theorem:TwistDerived}).

To prove that $E$ is semi-simple in $\HH$, we shall show that the canonical map $\epsilon : \Hom(E,X) \otimes_A E \longrightarrow X$ in $\HH$ is a monomorphism for every indecomposable $X \in \Ob \HH$.  Consider the following exact sequence in $\HH$
$$0 \longrightarrow {H^{-1} T_E (X)} \longrightarrow {\Hom(E,X) \otimes_A E} \stackrel{\epsilon}{\longrightarrow} {X} \longrightarrow  H^{0}T_E (X) \longrightarrow {\Ext(E,X) \otimes_A E} \longrightarrow 0$$
where $T_E: \Db \AA \to \Db \AA$ is the twist functor associated with $E$.  Since $\HH$ is hereditary and $T_E (X)$ is indecomposable, either $H^{-1} T_E (X)$ or $H^{0} T_E (X)$ is zero.  Seeking a contradiction, we shall assume that $\epsilon$ is not a monomorphism and hence $H^{-1} T_E (X) \in \HH$ is nonzero.  In particular, there is a path from $E$ to $H^{-1} T_E (X)$.

Since $T_E$ is an autoequivalence of $\Db \AA$ and there is a path from $E$ to $T_E (X) [-1]$, there must also be a path from $T_E^{-1}(E) \cong E$ to $X[-1]$, a contradiction.
\end{proof}

\begin{proof}[Proof of Theorem \ref{theorem:TubeMain}(3) (assuming $\KK$ is generalized standard).]
Let $r$ denote the rank of the generalized standard tube $\KK$.  Let $E_0$ be a peripheral object and denote $E = \oplus_{i = 1}^r \t^i E_0$.  Since $\KK$ is generalized standard, $E$ is a generalized 1-spherical object.  The requested property now follows from Lemma \ref{lemma:TauInvariant}.
\end{proof}

\begin{proposition}\label{proposition:SphericalInTube}
Let $E$ be a generalized 1-spherical object, then $E$ is the direct sum of a set of representatives of peripheral objects of generalized standard tubes.  The additive closure of all objects in a generalized standard tube in $\AA$ is equivalent to the category of nilpotent representations of an $\tilde{A}_n$-quiver with cyclic orientation.
\end{proposition}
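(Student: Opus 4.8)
The plan is to reduce the statement, via the Auslander--Reiten translate, to the analysis of a single tube, which is then realised as an explicit subcategory of a suitable hereditary heart. First I would record the structure of a generalized $1$-spherical object $E$. Writing $E \cong \bigoplus_i E_i$ as in \S\ref{section:TwistFunctors}, each $E_i$ is indecomposable with $\End(E_i)\cong k$, the permutation $\sigma$ characterised by $\tau E_i \cong E_{\sigma(i)}$ satisfies $\Ext^1(E_i,E_{\sigma(i)})\cong k$ with all other Ext-groups between the $E_i$ zero, and $\Hom(E_i,E_j)=\delta_{ij}\,k$. Splitting the index set into $\sigma$-orbits writes $E$ as a direct sum of generalized $1$-spherical objects of the shape $\bigoplus_{i=0}^{r-1}\tau^i E_0$ with $\tau^r E_0 \cong E_0$ and $E_0$ indecomposable (the summands $\tau^i E_0$, $0\le i<r$, being pairwise non-isomorphic, since $\Hom(E_0,\tau^j E_0)=0$ for $0<j<r$). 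So it is enough to treat such a single-orbit $E$ and to prove: (i) the $\tau^i E_0$ are exactly the peripheral objects of one common generalized standard tube $\KK$ of rank $r$, and (ii) $\add\KK\cong\nilp\tilde A_{r-1}$ with cyclic orientation.

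Next I would apply Lemma \ref{lemma:TauInvariant} to obtain a $\tau$-invariant $t$-structure whose heart $\HH$ is hereditary, derived equivalent to $\AA$, has no projective and no injective objects (Proposition \ref{proposition:TauInvariant}), and in which $E$ -- hence each $E_i:=\tau^i E_0$ -- is simple. Let $\TT\subseteq\HH$ be the full subcategory of objects admitting a finite filtration with all subquotients among $E_0,\dots,E_{r-1}$. Since the $E_i$ are simple in $\HH$ and $\tau$ permutes them, $\TT$ is a $\tau$-invariant Serre subcategory of $\HH$: it is an Ext-finite hereditary length category whose simple objects are precisely $E_0,\dots,E_{r-1}$, with $\End(E_i)\cong k$ and, computing Ext in $\Db\HH=\Db\AA$, with $\Ext^1_\TT(E_i,E_{i+1})\cong k$ and no other nonzero Ext-groups between simples, after relabelling so that $\sigma$ is the cyclic shift $i\mapsto i+1\bmod r$.

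The crux is to identify $\TT$ with $\nilp\tilde A_{r-1}$ (cyclic orientation). As every Ext-space between simples of $\TT$ is at most one-dimensional and the Ext-quiver is the cyclic quiver $\tilde A_{r-1}$, the universal extensions used to build the indecomposables are unique up to isomorphism; I would show $\TT$ is uniserial, produce for each $i$ and each $\ell\ge1$ a unique uniserial object of length $\ell$ with socle $E_i$, verify that these exhaust $\ind\TT$, and match the Hom-spaces with those of $\nilp\tilde A_{r-1}$ -- so that, in particular, $\TT$ has no projectives or injectives and a single Auslander--Reiten component, a tube of rank $r$. Once this is in hand the remainder is bookkeeping: since $\TT$ is $\tau$-invariant and extension-closed, the Auslander--Reiten triangle of $\Db\AA$ ending at any $X\in\TT$ is a short exact sequence $0\to\tau X\to M\to X\to0$ in $\HH$ with all terms in $\TT$ and is the Auslander--Reiten sequence of $\TT$; as the Auslander--Reiten sequences both ending at and starting at objects of $\TT$ then have middle term in $\TT$, the whole Auslander--Reiten component $\KK$ of $E_0$ in $\Db\AA$ lies in $\TT$ and coincides with the unique component of $\TT$. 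Hence $\add\KK=\TT\cong\nilp\tilde A_{r-1}$, the tube $\KK$ is generalized standard, and its peripheral (mouth) objects are precisely the simples $E_i$ of $\TT$; this gives (i) and (ii). The last sentence of the statement then follows because every generalized standard tube admits, by Theorem \ref{theorem:TubeMain}(3), a hereditary heart in which its peripheral objects are simple, so the sum of a set of representatives of those objects is a generalized $1$-spherical object and the tube falls under the analysis just carried out.

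The step I expect to be the real obstacle is precisely this identification $\TT\cong\nilp\tilde A_{r-1}$: passing from the abstract datum of a hereditary Ext-finite length category with cyclic Ext-quiver and all Ext-multiplicities one to the concrete nilpotent representation category. A possible shortcut -- invoking Theorem \ref{theorem:TubeCriterium} to place $E_0$ in a tube, Theorem \ref{theorem:TubeMain}(1)--(2) for its properties, and then forcing $E_0$ to have quasi-length $1$ from $\Hom(E_0,\tau^j E_0)=0$ for $0<j<r$ -- is available, but since Theorem \ref{theorem:TubeMain}(1) sits downstream of this Proposition, the self-contained route through $\TT$ is preferable.
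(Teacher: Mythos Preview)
Your proposal is correct and follows essentially the same route as the paper: reduce to a single $\tau$-orbit, apply Lemma~\ref{lemma:TauInvariant} to obtain a heart $\HH$ in which $E$ is semi-simple, form the Serre (equivalently, exact extension-closed) subcategory $\TT=\AA_E$ generated by the simples $E_i$, identify this subcategory with $\nilp\tilde A_{r-1}$, and deduce that its Auslander--Reiten quiver is a single tube which is simultaneously a component of the Auslander--Reiten quiver of $\AA$.

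The only substantive difference is at the step you correctly flag as the crux. Where you propose to establish $\TT\cong\nilp\tilde A_{r-1}$ by a direct uniserial analysis (building the indecomposables of each length and socle by hand), the paper short-circuits this by invoking a classification result from the literature: a connected Ext-finite hereditary length category with Serre duality and finitely many simples, all with endomorphism ring $k$, is equivalent to $\nilp\tilde A_n$ for suitable $n$ (cf.\ \cite[Proposition 1.8.2]{ChenHenning09} or \cite[Proposition 8.3]{Gabriel73}). Your hands-on argument would work and is self-contained, but it is more laborious; the citation buys exactly the uniseriality and the enumeration of indecomposables you are proposing to verify. Either way, once $\TT\cong\nilp\tilde A_{r-1}$ is known, your bookkeeping paragraph (matching Auslander--Reiten sequences in $\TT$ with Auslander--Reiten triangles in $\Db\AA$ to conclude that the ambient component coincides with $\TT$) is the same argument the paper uses, phrased slightly more explicitly. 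Your treatment of the last sentence via Theorem~\ref{theorem:TubeMain}(3) is also fine and not circular, since that part of Theorem~\ref{theorem:TubeMain} is proved (under the standardness hypothesis) before and independently of the present Proposition.
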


\begin{proof}
Without loss of generality, we may assume that $E$ has no nontrivial direct summands which are generalized 1-spherical.

As in Lemma \ref{lemma:TauInvariant}, we will consider a $t$-structure such that $E$ is semi-simple in the heart $\HH$.  The full, exact, and extension-closed abelian category $\AA_E$ generated by $E$ is a hereditary length category with Serre duality and it follows from \cite[Proposition 1.8.2]{ChenHenning09} (see also \cite[Proposition 8.3]{Gabriel73}) that $\AA_E$ is derived equivalent to the category of nilpotent representations of an $\tilde{A}_n$-quiver with cyclic orientation.

The $\tau$-action on $\AA_E$ corresponds to the $\tau$-action on $\AA$ and hence the Auslander-Reiten quiver of $\AA_E$ is a component of the Auslander-Reiten quiver of $\AA$.  We see that $E$ lies in a generalized standard tube of $\AA$ and that the additive closure of all objects in this tube is equivalent to the category of nilpotent representations of an $\tilde{A}_n$-quiver with cyclic orientation.
\end{proof}

Let $\KK$ be a generalized standard tube.  Every $A$ in $\ind \KK$ is a finite number of extensions of the peripheral objects and the number of peripheral objects occurring in such a composition series of $A$ will be denoted $l_\KK A$; this corresponds to the normal length of $A$ in $\nilp \tilde{A}_r$.

\begin{lemma}\label{lemma:SphericalLength}
Let $\KK$ be a simple tube and write $E$ for the direct sum of all the peripheral (simple) objects in $\KK$.  Let $X$ be an indecomposable object of $\AA$ that does not lie in $\KK$.  If $\Hom(E,X) \not= 0$ for a peripheral object $E$ of $\KK$, then for any $l \in \bN$, $X$ has a subobject $Y$ with $l_\KK Y = l$.
\end{lemma}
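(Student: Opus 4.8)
The plan is to produce the required subobjects directly, by iterating a ``$\KK$-socle'' operation on $X$, and to show that this iteration can stall only if $X$ decomposes. For $W\in\AA$ let $\soc_\KK W\subseteq W$ denote the maximal semisimple subobject of $W$ lying in $\KK$, that is, the sum of all simple subobjects of $W$ isomorphic to a peripheral object of $\KK$; thus $\soc_\KK W\ne 0$ exactly when $\Hom(E,W)\ne 0$. Because $\KK$ is a \emph{simple} tube, each peripheral object of $\KK$ is a simple object of $\AA$, so the hypothesis yields a monomorphism from a peripheral simple into $X$, and in particular $\soc_\KK X\ne 0$. I would then define an ascending chain $0=Z_0\subseteq Z_1\subseteq Z_2\subseteq\cdots$ of subobjects of $X$ by $Z_{n+1}/Z_n=\soc_\KK(X/Z_n)$. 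By Proposition \ref{proposition:SphericalInTube} the additive closure of $\ind\KK$ equals the subcategory $\AA_E\subseteq\AA$ of objects all of whose composition factors are peripheral simples of $\KK$, and $\AA_E$ is an extension-closed hereditary length category; since $Z_{n+1}$ is an extension of the semisimple object $\soc_\KK(X/Z_n)\in\AA_E$ by $Z_n$, induction gives $Z_n\in\KK$ for all $n$, with $l_\KK Z_{n+1}-l_\KK Z_n$ equal to the length of $\soc_\KK(X/Z_n)$.

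Next I would split into two cases. If $\soc_\KK(X/Z_n)\ne 0$ for every $n$, the $Z_n$ strictly increase and $l_\KK Z_n\ge n$, so for any $l$ one picks $n\ge l$ and takes a subobject of length exactly $l$ in a composition series of $Z_n$; this is a subobject of $X$ lying in $\KK$ of length $l$, as required. It therefore remains to rule out that $\soc_\KK(X/Z_n)=0$ for some $n$. Such an $n$ must be $\ge 1$ (because $\soc_\KK X\ne 0$), so $Z:=Z_n\supseteq Z_1\ne 0$; and $Y:=X/Z_n\ne 0$, since $Y=0$ would force $X=Z_n\in\KK$, contrary to hypothesis. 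Moreover $\soc_\KK Y=0$ says precisely that $\Hom(E,Y)=0$.

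The key step is that $\Ext^1(Y,Z)=0$. As $Z$ admits a finite composition series with factors peripheral simples $S_i$ of $\KK$, and $\AA$ is hereditary (so $\Ext^{\ge 2}=0$), it suffices by induction along this series to prove $\Ext^1(Y,S_i)=0$ for each $i$. Serre duality together with $\bS\cong\t[1]$ gives $\Ext^1(Y,S_i)\cong\Hom(S_i,\t Y)^{*}$, and applying the autoequivalence $\t^{-1}$ of $\Db\AA$ this is $\Hom(\t^{-1}S_i,Y)^{*}$; since $\t$ permutes the peripheral (hence simple) objects of $\KK$, the object $\t^{-1}S_i$ is again a peripheral simple, whence $\Hom(\t^{-1}S_i,Y)=0$ because $\Hom(E,Y)=0$. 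Thus $\Ext^1(Y,Z)=0$, so the short exact sequence $0\to Z\to X\to Y\to 0$ splits; as $Z\ne 0\ne Y$ this contradicts the indecomposability of $X$, completing the proof.

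I expect the only genuinely load-bearing point to be the Serre-duality identity $\Ext^1(Y,S_i)\cong\Hom(\t^{-1}S_i,Y)^{*}$ combined with the $\t$-invariance of the family of peripheral simples — this is what converts ``$\soc_\KK Y=0$'' into the vanishing of $\Ext^1(Y,-)$ on all of $\KK$. The remaining points are routine but should be checked: that the layers $\soc_\KK(X/Z_n)$ keep $Z_n$ inside $\KK$ (which uses Proposition \ref{proposition:SphericalInTube} to identify $\add\ind\KK$ with the extension-closed length subcategory $\AA_E$), and that being a \emph{simple} tube is exactly what lets one treat peripheral objects as simple objects of $\AA$, used both for the initial monomorphism and for ``$\t^{-1}S_i$ is simple''.
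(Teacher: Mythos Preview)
Your proof is correct and is essentially a repackaging of the paper's argument without the twist-functor language. The paper applies $T_E$ to $X$ to obtain the exact sequence
\[
0\longrightarrow \Hom(E,X)\otimes_A E\longrightarrow X\longrightarrow X_1\longrightarrow 0,
\]
observes that indecomposability of $X$ forces $\Ext(X_1,E)\ne 0$, and then uses Serre duality (with $\t E\cong E$) to get $\Hom(E,X_1)\ne 0$ and iterate. Your $\soc_\KK X$ is exactly the image of $\Hom(E,X)\otimes_A E\to X$, so your chain $Z_n$ is the same filtration the paper builds, and your Serre-duality step $\Ext^1(Y,S_i)\cong\Hom(\t^{-1}S_i,Y)^*$ is the same mechanism. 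The one genuine organizational difference is that you argue by contradiction against the whole filtration (``if it stalls at $Z_n$ then $0\to Z_n\to X\to X/Z_n\to 0$ splits''), which cleanly uses indecomposability of $X$ just once; the paper's ``iteration'' implicitly needs the same global argument, since $X_1$ need not be indecomposable. Your version also avoids invoking the twist-functor machinery of \S\ref{section:TwistFunctors}, at the cost of not exhibiting the connection to $T_E$ that the paper wants to emphasize.
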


\begin{proof}
Applying the twist functor $T_E$ to $X$ gives an exact sequence
$$0 \longrightarrow {\Hom(E,X) \otimes_A E} \longrightarrow {X} \longrightarrow  T_E (X) \longrightarrow {\Ext(E,X) \otimes_A E} \longrightarrow 0$$
which we can shorten to $0 \rightarrow {\Hom(E,X) \otimes_A E} \rightarrow {X} \rightarrow  X_1 \to 0$ where $X_1 = \im(X \to T_E(X))$.
Note that $X_1$ is non-zero because $X$ does not lie in the standard tube $\KK$.  Since $X$ is indecomposable it follows readily from the last exact sequence that $\Ext(X_1,E) \not= 0$ and thus $\Hom(E,X_1) \not= 0$.

Iteration shows we may find a subobject $Y$ of $X$ with $l_\KK Y = l$ for any $l \in \bN$.
\end{proof}

\begin{lemma}\label{lemma:LengthGivesHoms}
Let $\KK$ be a standard tube of rank $r$, and let $A, B \in \ind \KK$.  If $l = \min \{l_\KK A, l_\KK B\}$, then there is a $k \in \bN$ such that $\dim \Hom(\t^k A,B) \geq \frac{l}{r}$.
\end{lemma}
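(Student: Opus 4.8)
The plan is to work inside the category $\nilp\tilde A_r$, to which the additive closure of $\KK$ is equivalent by Proposition \ref{proposition:SphericalInTube}; under this identification the peripheral objects of $\KK$ become the $r$ simple objects $S_1,\dots,S_r$ (indexed cyclically so that $\t S_i\cong S_{i-1}$), and $l_\KK$ becomes the ordinary composition length. The basic observation is that every indecomposable object of $\nilp\tilde A_r$ is \emph{uniserial}: it is determined up to isomorphism by its socle $S_i$ (equivalently its top, a peripheral object) and its length $\ell$, and its composition factors are $S_i, S_{i+1},\dots,S_{i+\ell-1}$ read off around the cycle. Thus for a fixed length $\ell$ the $r$ indecomposables of that length partition, after $\ell$ steps around the cycle, their $\ell$ composition factors among the $r$ simples.

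The next step is to recall how $\Hom(U,V)$ behaves for uniserial objects $U,V$ in $\nilp\tilde A_r$: a nonzero map $U\to V$ is one that identifies a quotient of $U$ with a subobject of $V$, and in the uniserial situation $\dim\Hom(U,V)$ equals the number of common ``positions'' around the cycle that occur both as a bottom segment (subobject) of $V$ and a top segment (quotient) of $U$. In particular, if $U$ has socle $S_i$ and length $\ell$, then for a peripheral object $S_i$ we have $\dim\Hom(S_i,U)=1$, and more generally $\dim\Hom(\t^k A, B)$ counts the overlaps of the composition-factor string of $\t^k A$ with that of $B$ when the two strings are aligned appropriately. Now let $\ell=\min\{l_\KK A, l_\KK B\}$, and consider all $r$ rotations $\t^k A$ for $k=0,1,\dots,r-1$. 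Summing $\dim\Hom(\t^k A,B)$ over these $r$ values of $k$: each of the bottom $\ell$ composition factors of $B$ (a subobject of $B$ of length $\ell$, which exists since $l_\KK B\geq\ell$) sits at some position around the cycle, and as $k$ ranges over a full period the socle of $\t^k A$ sweeps out all $r$ positions, so each such composition factor of $B$ is ``hit'' — i.e.\ contributes at least $1$ to $\dim\Hom(\t^k A,B)$ for at least one $k$ (in fact, since $l_\KK A\geq\ell$ as well, a length-$\ell$ subobject of a suitable rotation maps nonzero into $B$). This gives $\sum_{k=0}^{r-1}\dim\Hom(\t^k A,B)\geq \ell$, whence by pigeonhole there is some $k$ with $\dim\Hom(\t^k A,B)\geq \ell/r$.

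The main obstacle I expect is making the overlap/counting argument in the previous paragraph precise and uniform in the two cases $\ell=l_\KK A\le l_\KK B$ and $\ell=l_\KK B\le l_\KK A$ — specifically, verifying that a length-$\ell$ substring appearing in $B$ really does give a nonzero homomorphism from the appropriately rotated $A$ (one must match a \emph{subobject} of $B$ with a \emph{quotient} of $\t^k A$, and check the relevant segment of $\t^k A$ of length $\ell$ indeed surjects onto it, which uses $l_\KK A\geq\ell$). Once the dictionary ``$\Hom$ between uniserials $=$ number of compatible cyclic overlaps'' is set up carefully, the averaging step is immediate. An alternative, perhaps cleaner, route to the summed inequality is to apply Lemma \ref{lemma:SphericalLength}-type reasoning together with Serre duality: $\dim\Hom(\t^k A,B)=\dim\Hom(B,\t^{k-1}A[1])^{*}=\dim\Ext(B,\t^{k-1}A)$, turning the count of $\Hom$'s into a count of extensions of the length-$\ell$ composition series, but I would expect the direct uniserial bookkeeping above to be the shortest.
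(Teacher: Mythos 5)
Your proposal is correct and follows essentially the same route as the paper: both reduce to $\nilp \tilde{A}_r$ via Proposition \ref{proposition:SphericalInTube} and exploit the uniserial structure of the indecomposables there. The only difference is the final step: where you sum $\dim\Hom(\t^k A,B)$ over all $r$ rotations and apply pigeonhole, the paper directly chooses the single $k$ aligning the socle of $\t^k A$ with that of $B$ when $l_\KK A\le l_\KK B$ (tops in the other case), which identifies $\Hom(\t^k A,B)$ with the endomorphism ring of the shorter object, of dimension $\lceil l/r\rceil\ge l/r$.
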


\begin{proof}
Recall from Proposition \ref{proposition:SphericalInTube} that $\KK$ corresponds to the category of nilpotent representations of the quiver $\tilde{A}_n$ with cyclic orientation.  We will work in this last category.

If $l_\KK A \leq l_\KK B$, we will choose $k$ such that the simple socle of $\t^k A$ is isomorphic to the simple socle of $B$, thus $\t^k A$ is a subobject of $B$.  In this case, $\dim \Hom(\t^k A,B) = \dim \End(\t^k A) \geq \frac{l}{r}$.

If $l_\KK A > l_\KK B$, then we choose $k$ such that the simple top of $A$ is isomorphic to the simple top of $B$, thus $B$ is a quotient object of $\t^k A$.  We find that $\dim \Hom(\t^k A,B) = \dim \End(B) \geq \frac{l}{r}$.
\end{proof}

\begin{proposition}\label{proposition:SimplesInTubes}
Let $\KK$ be a standard tube.  If there are Auslander-Reiten components $\KK_1$ and $\KK_2$ different from $\KK$ with maps from $\KK_1$ to $\KK$ and from $\KK$ to $\KK_2$, then $\KK$ is not a simple tube.
\end{proposition}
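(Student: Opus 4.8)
The plan is to argue by contradiction. Suppose, for contradiction, that $\KK$ is a simple tube of rank $r$; then its peripheral objects $E_1,\dots,E_r$ are simple in $\AA$. Put $E=\bigoplus_{i=1}^r E_i$. Recall from Proposition \ref{proposition:SphericalInTube} that $\add\KK\simeq\nilp\tilde A_r$, so every indecomposable of $\KK$ is uniserial with simple (peripheral) socle and top, and — the $E_i$ being simple in $\AA$ — $\add\KK$ is closed under subobjects and quotient objects in $\AA$. From the two hypothesised maps I will manufacture a single object receiving an infinite-dimensional $\Hom$-space, which contradicts Ext-finiteness. (All objects and maps below are taken in $\AA$, the setting of Lemmas \ref{lemma:SphericalLength} and \ref{lemma:LengthGivesHoms}.)

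First I would exploit the map out of $\KK$. A nonzero map $W\to U_2$ with $W\in\ind\KK$ and $U_2\in\ind\KK_2$ (so $U_2\notin\KK$) has image a nonzero quotient of the uniserial $W$, hence a uniserial object of $\KK$ whose simple socle $E_{i_0}$ embeds in $U_2$; thus $\Hom(E_{i_0},U_2)\neq0$. By Lemma \ref{lemma:SphericalLength}, $U_2$ has a subobject in $\add\KK$ of each $\KK$-length $n$; its socle embeds in $U_2$ and so is a sum of at most $d:=\dim\Hom(E,U_2)$ peripheral simples, hence this subobject has at most $d$ uniserial summands, and its longest summand is an object $Y_n\in\ind\KK$ with $Y_n\hookrightarrow U_2$ and $l_\KK Y_n\ge n/d$. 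Dually — $\AA^{\mathrm{op}}$ being again hereditary with Serre duality and $\KK^{\mathrm{op}}$ a simple tube of rank $r$ in it — a nonzero $U_1\to V$ with $U_1\in\ind\KK_1$ ($\notin\KK$) and $V\in\ind\KK$ yields $\Hom(U_1,E_{j_0})\neq0$ for the simple top $E_{j_0}$ of its image, and hence an indecomposable quotient $Z_n$ of $U_1$ in $\ind\KK$ with $l_\KK Z_n\ge n/d'$, where $d':=\dim\Hom(U_1,E)$.

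Now I would glue these together inside the tube. For each $n$, Lemma \ref{lemma:LengthGivesHoms} applied to $Z_n,Y_n\in\ind\KK$ gives an integer $k_n$ — which may be taken in $\{0,1,\dots,r-1\}$, since it is determined merely by which of the $r$ peripheral simples must coincide as socles (or tops) — with $\dim\Hom(\t^{k_n}Z_n,Y_n)\ge\min\{l_\KK Z_n,l_\KK Y_n\}/r$. Applying the autoequivalence $\t^{k_n}$ to $U_1\twoheadrightarrow Z_n$ and composing with these maps and $Y_n\hookrightarrow U_2$ gives an injection $\Hom(\t^{k_n}Z_n,Y_n)\hookrightarrow\Hom(\t^{k_n}U_1,U_2)$, so
\[
\dim\Hom(\t^{k_n}U_1,U_2)\;\ge\;\frac{\min\{l_\KK Z_n,\,l_\KK Y_n\}}{r}\;\ge\;\frac{n}{r\,\max\{d,d'\}}\;\longrightarrow\;\infty\qquad(n\to\infty).
\]
As the $k_n$ range over the finite set $\{0,\dots,r-1\}$, some value $k$ occurs for infinitely many $n$, whence $\dim\Hom(\t^{k}U_1,U_2)=\infty$, contradicting Ext-finiteness of $\AA$.

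The main obstacle is precisely this control of the twist: Lemma \ref{lemma:LengthGivesHoms} only supplies homomorphisms after an a priori $n$-dependent shift $\t^{k_n}$, and the argument turns on the fact that this shift lives in $\bZ/r$, which lets a pigeonhole step concentrate the growing $\Hom$-spaces on one object $\t^kU_1$. A subsidiary point needing care is the reduction from ``sub- (or quotient) objects of every $\KK$-length'' to ``indecomposable ones of unbounded length''; this relies on the finiteness of $\dim\Hom(E,U_2)$ and $\dim\Hom(U_1,E)$ to cap the number of uniserial summands.
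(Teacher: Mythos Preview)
Your proof is correct and follows the same strategy as the paper's: both use Lemma~\ref{lemma:SphericalLength} (and its dual) to produce sub/quotient objects in $\KK$ of arbitrary length, then Lemma~\ref{lemma:LengthGivesHoms} to force $\dim\Hom(\t^k X_1,X_2)$ to be unbounded for some $k$. You are in fact more careful than the paper in two places it leaves implicit --- the reduction to \emph{indecomposable} sub/quotients via the bound on socle/top size, and the pigeonhole on $k\in\{0,\dots,r-1\}$.
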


\begin{proof}
Seeking a contradiction, assume that $\KK$ contains a simple object.

Let $X_1 \in \KK_1$ and $X_2 \in \KK_2$ be objects mapping nonzero to and from an object $Y \in \KK$, respectively.  We obtain from Lemma \ref{lemma:SphericalLength} and its dual that for every $l \in \bN$, there is a quotient object $Y_1$ of $X_1$ and a subobject $Y_2$ of $X_2$, both lying in $\KK$, with $l_\KK Y_1 = l_\KK Y_2 = l$.

If we write $r$ for the rank of $\KK$, then Lemma \ref{lemma:LengthGivesHoms} yields $\dim \Hom(\t^k Y_1, Y_2) \geq \frac{l}{r}$, for a certain $k \in \bZ$.  Since $l$ may be chosen arbitrarily large, and since $\dim \Hom(\t^k X_1, X_2) \geq \dim \Hom(\t^k Y_1, Y_2)$, we find the required contradiction.
\end{proof}

\begin{proof}[Proof of Theorem \ref{theorem:TubeMain}(2) (assuming $\KK$ is standard).]
We choose a tilt $\HH$ of $\AA$ as in Theorem \ref{theorem:TubeMain}(3), thus $\KK$ corresponds to a simple tube in $\HH$.  It is clear that, if $X_0 \to X_1 \to \cdots \to X_n$ is a path in the original abelian category, then it is a path in the tilted category $\HH$.

It follows from Proposition \ref{proposition:SimplesInTubes} that every object in this path lies in $\KK$.
\end{proof}

This final result states that an Auslander-Reiten component is a standard tube if and only it contains a finite $\t$-orbit, and finishes the proof of both Theorems \ref{theorem:TubeMain} and \ref{theorem:TubeCriterium}.

\begin{lemma}\label{lemma:TubeCriterium}
Let $\KK$ be a component of the Auslander-Reiten quiver such that there is an $X \in \ind \KK$ with $\t^r X \cong X$ for $r > 0$, then $\KK$ is a standard tube.
\end{lemma}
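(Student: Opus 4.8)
The plan is to show that the component $\KK$ containing a periodic object $X$ (i.e.\ $\tau^r X \cong X$) must be a standard tube of the form $\bZ A_\infty/\langle\tau^s\rangle$ for some $s \mid r$. First I would reduce to the situation where we already have strong structural control: replacing $X$ by $X$ itself, the object $E := \bigoplus_{i=1}^{r}\tau^i X$ has $\End(E)\cong k^r$ if (and only if) the component $\KK$ is ``sufficiently rigid'', so the first task is to verify that $\bigoplus_{i=1}^r \tau^i X$ is in fact a generalized $1$-spherical object. By Serre duality, $\bS(\tau^i X) = \tau^{i+1}X[1]$, so $\bS E \cong E[1]$ automatically; the content is that $\Hom(\tau^i X,\tau^j X)$ is $1$-dimensional for $i\equiv j$ and $0$ otherwise. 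To see this I would first pass to a $\tau$-periodic object of minimal period and use the hereditary Serre-duality structure together with $\Ext^2 = 0$ to force the endomorphism algebra to be semisimple: any non-isomorphism in $\End(X)$ would, by Harada--Sai-type arguments in the AR-quiver of $\KK$ together with periodicity, produce an infinite chain of non-invertible endomorphisms of a fixed object, contradicting Hom-finiteness. (If $X$ is not endo-simple, one decomposes along the periodic $\tau$-orbit to extract an endo-simple periodic summand.)

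Once $E = \bigoplus_{i=1}^{s}\tau^i E_0$ with $E_0$ endo-simple and $\tau$-periodic of period $s$ is known to be generalized $1$-spherical, I would apply Proposition~\ref{proposition:SphericalInTube}: this says precisely that $E$ is a direct sum of representatives of the peripheral objects of generalized standard tubes, and that the additive closure of such a tube is equivalent to $\nilp \tilde A_{s}$ with cyclic orientation. In particular $E_0$ lies in a generalized standard tube $\KK'$ of rank $s$, and the AR-quiver of the subcategory generated by $E$ is a full component of the AR-quiver of $\AA$ (resp.\ $\Db\AA$). Since $X$ appears (up to shift and $\tau$) in this subcategory — indeed $X$ is obtained from $E_0$ by iterated extensions within the tube — the component $\KK$ containing $X$ coincides with $\KK'$. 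Hence $\KK$ is a generalized standard tube.

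It then remains to upgrade ``generalized standard'' to ``standard''. Here I would invoke Theorem~\ref{theorem:TubeMain}(2),(3) in the already-proven generalized-standard case: part (3) gives a $\tau$-invariant $t$-structure with hereditary heart $\HH$ in which the peripheral objects of $\KK$ are simple, so $\KK$ becomes a simple tube in $\HH$, and a simple tube is standard by the remark following Theorem~\ref{theorem:TubeMain} (additivity of $\infrad(S_i,-)$ and $\infrad(-,S_j)$ on AR-sequences, starting from $\infrad(S_i,S_j)=0$). Pulling back along the derived equivalence $\Db\HH \cong \Db\AA$, which preserves $\infrad$ and the AR-structure, shows $\KK$ is standard in $\Db\AA$ as well.

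\emph{Main obstacle.} The delicate step is the first one: producing the generalized $1$-spherical object, i.e.\ showing that a single periodic indecomposable $X$ forces $\End$ along its $\tau$-orbit to be semisimple with the correct graded dimensions. The risk is circularity — one wants to avoid assuming the shape $\bZ A_\infty/\langle\tau^r\rangle$ of the component before it has been established. I expect this to be handled purely from Hom-finiteness plus heredity plus Serre duality: periodicity of $X$ makes $\bigoplus \Hom(\tau^i X, X)$ finite-dimensional and closed under a shift operator, and a minimal-counterexample / infinite-radical-chain argument rules out non-trivial idempotents and non-invertible endomorphisms, exactly as in the classical identification of periodic modules over tame algebras; everything after that is a formal consequence of the results already proved in \S\ref{section:TwistFunctors} and the earlier parts of \S\ref{section:Tubes}.
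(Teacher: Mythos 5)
There is a genuine gap at the step you yourself flag as the ``main obstacle.'' Your plan is to show that $E=\bigoplus_{i=1}^r\tau^iX$ is generalized $1$-spherical, i.e.\ that $\Hom(\tau^iX,\tau^jX)$ is $1$-dimensional for $i\equiv j$ and $0$ otherwise, and you propose to force this by a Harada--Sai / infinite-radical-chain argument ruling out non-invertible endomorphisms. This cannot work, because the statement is simply false for a general periodic indecomposable: any non-peripheral object $X$ in a tube is $\tau$-periodic but has $\dim\End(X)>1$ and nonzero nilpotent radical endomorphisms (already for the length-two indecomposable in $\nilp\tilde A_0$ one has $\End(X)\cong k[x]/(x^2)$). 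An argument that proved every periodic indecomposable endo-simple would prove too much. Your parenthetical fallback --- ``one decomposes along the periodic $\tau$-orbit to extract an endo-simple periodic summand'' --- does not make sense either: $X$ is indecomposable, so it has no nontrivial direct summands; what is needed is a \emph{subquotient}, not a summand.

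The paper's proof supplies exactly the missing mechanism. It first tilts to a $\tau$-invariant heart $\HH$ containing $X$, then inducts on $d=\sum_{i=0}^{r-1}\dim\Hom(X,\tau^iX)$: if $d>1$ there is a nonzero $f\in\rad(X,\tau^jX)$, chosen as an eigenvector of the automorphism of $\Hom(X,\tau^jX)$ induced by $\tau^r$, and one replaces $X$ by $X_1=\operatorname{im}f$. This $X_1$ is again $\tau^r$-periodic, is a quotient of $X$ and a subobject of $\tau^jX$ (hence connected to $X$ by paths in both directions), and the quantity $d$ strictly drops because $1_X$ does not factor through $X_1$. Iterating produces a peripheral object $S$ with $\sum_i\dim\Hom(S,\tau^iS)=1$, to which Proposition~\ref{proposition:SphericalInTube} applies, and the two-way paths together with the convexity statement of Theorem~\ref{theorem:TubeMain}(2) place $X$ in the same component as $S$. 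Note also that your sentence ``$X$ is obtained from $E_0$ by iterated extensions within the tube'' is circular as written: that $X$ lies in the tube is precisely what is being proved, and the paper avoids this by the path-plus-convexity argument rather than by locating $X$ inside the subcategory generated by $E$. The final upgrade from ``generalized standard'' to the conclusion is not an issue (the paper uses the two terms interchangeably here), but without the descent-to-a-subquotient induction the proof does not get off the ground.
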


\begin{proof}
First, we use $X$ to define a $\t$-invariant aisle $\UU$ in $\Db \AA$ in the usual way:
$$\ind \UU = \{ Y \in \ind \Db \AA \mid \mbox{there is a path from $\t^n X$ to $Y$, for a certain $n \in \bN$} \}$$
and we denote the heart of the corresponding $t$-structure by $\HH$.  The translation $\t$ thus defines an exact autoequivalence $\HH \stackrel{\sim}{\rightarrow} \HH$.  We may assume $r$ is the smallest natural number such that $\t^r X \cong X$. 

We claim that there is a peripheral object $S$ lying in a standard tube such that there is a path from $X$ to $S$ and vice versa.  It will then follow from Theorem \ref{theorem:DirectingTubes}(2) that $X$ lies in the same Auslander-Reiten component as $S$ and hence that $\KK$ is a standard tube.

To show that $S$ is such an object, Proposition \ref{proposition:SphericalInTube} asserts we need only to verify that the $\t$-period of $S$ is finite and that $\sum_{i=0}^{s-1} \dim \Hom(S,\t^i S) = 1$ where $s > 0$ is the smallest natural number such that $S \cong \t^s S$ so that $\oplus_{i = 0}^{s-1} \t^i S$ is a 1-spherical object.  We will prove the existence of such an object $S$ by induction on $d = \sum_{i=0}^{r-1} \dim \Hom(X,\t^i X)$.

The case $d=1$ is trivial, thus assume $d > 1$.  In this case, there is a $j$ with $0 \leq j < r$ such that $\rad(X, \t^j X) \not= 0$.  Fixing isomorphisms $X \cong \t^r X$ and $\t^j X \cong \t^{r+j} X$, the functor $\t^r$ induces an automorphism of $\Hom(X, \t^j X)$.

Let $f \in \rad(X,\t^j X)$ be an eigenvector of this automorphism and denote $X_1 = \im f$.  Note that $\t^r X_1 \cong X_1$.  Furthermore, since $X_1$ is a quotient object of $X$ and a subobject of $\t^j X$, there are paths from $X$ to $X_1$ and vice versa.  Furthermore, as $\t$ is an autoequivalence of $\HH$, there are monomorphisms
$$\mbox{$\Hom(X_1,\t^{i} X_1) \hookrightarrow \Hom(X,\t^{i+j} X)$ for $0 \leq i < r$,}$$
such that, using that $f$ is a radical map, we find
$$\sum_{i=0}^{r-1} \dim \Hom(X_1,\t^i X_1) \leq \sum_{i=0}^{r-1} \dim \Hom(X,\t^{i+j} X) - 1,$$
as $1_X \in \Hom(X,X)$ is easily seen to not factor through $X_1$.

Changing $r$ in the left hand side to the smallest $r_1 > 0$ such that $X_1 \cong \t^{r_1} X_1$, we can use iteration to conclude the proof.
\end{proof}

The following corollary is our main application of Theorem \ref{theorem:DirectingTubes}.

\begin{corollary}\label{corollary:TubesDirecting}
Let $\AA$ be a hereditary category with Serre duality.  Let $X,Y \in \ind \AA$ and assume that $X$ or $Y$ lie in a tube of $\AA$, but $X$ and $Y$ do not lie in the same tube.  If $\Hom(X,Y) \not= 0$, then $\Ext(X,Y) = 0$.
\end{corollary}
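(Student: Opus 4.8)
The plan is to reduce the statement to a path argument combined with the convexity of tubes (Theorem \ref{theorem:DirectingTubes}(2)). Suppose for contradiction that $\Hom(X,Y) \not= 0$ and $\Ext(X,Y) \not= 0$, where without loss of generality $X$ lies in a tube $\KK$ (the case where $Y$ lies in a tube is dual via Serre duality, applying the argument to $\bS Y = \t Y[1]$ or by working in the opposite category). By Serre duality, $\Ext(X,Y) = \Hom(X,Y[1]) \cong \Hom(Y,\t X)^*$, so $\Ext(X,Y) \not= 0$ gives a nonzero map $Y \to \t X$. Since $\t X \in \ind \KK$ as well (tubes are $\t$-stable), we now have nonzero maps $X \to Y$ and $Y \to \t X$, i.e. a path $X \to Y \to \t X$ in $\Db \AA$ (in fact in $\AA$) with both endpoints $X, \t X$ in the tube $\KK$.

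By Theorem \ref{theorem:DirectingTubes}(2), the tube $\KK$ is convex, so every object on this path — in particular $Y$ — lies in $\KK$. This contradicts the hypothesis that $X$ and $Y$ do not lie in the same tube. Hence $\Ext(X,Y) = 0$.

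The only point requiring a little care is the symmetry between the two cases ``$X$ in a tube'' and ``$Y$ in a tube'': in the second case one applies Serre duality the other way, writing $\Ext(X,Y) \cong \Hom(\t^{-1} Y, X)^*$ to produce a nonzero map $\t^{-1} Y \to X$ and hence a path $\t^{-1} Y \to X \to Y$ with endpoints in the tube containing $Y$, again forcing $X$ into that tube by convexity. I do not anticipate a real obstacle here, since all the substantive work — generalized standardness and convexity of tubes — is already packaged into Theorem \ref{theorem:DirectingTubes}; the corollary is essentially a one-line consequence, the only subtlety being to invoke Serre duality to turn the $\Ext$ into a $\Hom$ in the correct direction so that the path has both endpoints inside the tube.
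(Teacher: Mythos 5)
Your proof is correct and follows essentially the same route as the paper: both arguments reduce the statement to the convexity of tubes (Theorem \ref{theorem:DirectingTubes}(2)) by producing a path that begins and ends inside the tube and passes through the other object. The only difference is the return leg: you use Serre duality to turn $\Ext(X,Y)\neq 0$ into a nonzero map $Y \to \t X$ (noting $\t X$ stays in the tube), whereas the paper extracts a path from $Y$ to $X$ through the middle term of a nonsplit short exact sequence, citing \cite{Ringel05}; both are valid, and your case analysis for ``$Y$ in a tube'' is handled correctly.
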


\begin{proof}
Assume that $\Ext(X,Y) \not= 0$, then there is a short exact sequence $0 \to Y \to M \to X \to 0$.  It follows from \cite{Ringel05} that this gives a path from $Y$ to $X$, contradicting Theorem \ref{theorem:DirectingTubes}.
\end{proof}
\section{Hereditary fractionally Calabi-Yau categories}\label{section:Examples}

Let $\AA$ be an Ext-finite abelian category.  We will say that $\AA$ is Calabi-Yau of dimension $m$ (or $m$-Calabi-Yau) if $\Db \AA$ has a Serre functor $\bS : \Db \AA \to \Db \AA$, and $\bS \cong [m]$ as triangle functors.

By extension, if $\bS^n \cong [m]$ for some $n > 0$, then we will say $\AA$ is \emph{fractionally Calabi-Yau of dimension $\frac{m}{n}$} or $\frac{m}{n}$-Calabi-Yau.  We will write $\CYdim \AA = \frac{m}{n}$.

\begin{remark}
When we say $\AA$ is $\frac{m}{n}$-Calabi-Yau, we will mean that $\bS^n \cong [m]$.  In other words, we do not simplify the fraction.

It will however be convenient to interpret the fractional Calabi-Yau dimension $\CYdim \AA$ as a rational number and thus when we refer to the fractional Calabi-Yau dimension $\CYdim \AA$ we will simplify the fraction.

Thus a category which is fractionally Calabi-Yau of dimension 1 is not necessarily a 1-Calabi-Yau category.  An $n$-Calabi-Yau category is the same as a fractionally $\frac{n}{1}$-Calabi-Yau category.  Also, a fractionally $\frac{m}{n}$-Calabi-Yau category is a fractionally $\frac{km}{kn}$-Calabi-Yau category for any $k > 0$; the converse does not hold (see Example \ref{example:fractionalCYTubes}).
\end{remark}

\begin{example}\label{example:fractionalCalabi-Yau}\cite{Keller05}
Let $Q$ be a Dynkin quiver, and let $h$ be its Coxeter number.  In $\Db \rep Q$ we find
$$\bS^h = (\t [1])^h = \t^h [h] = [h-2]$$
and hence $\Db \rep Q$ is Calabi-Yau of fractional Calabi-Yau dimension $\frac{h-2}{h}$.
\end{example}

\begin{example}\label{example:fractionalCYTubes}
Let $Q$ be the quiver $\tilde{A_n}$ with cyclic orientation.  In the category $\Db \nilp Q$, we have $\bS^{n+1} = [n+1]$, hence $\nilp Q$ is $\frac{n+1}{n+1}$-Calabi-Yau.  It will only be 1-Calabi-Yau if $Q$ is the one-loop quiver.
\begin{figure}
	\centering
		\includegraphics[totalheight=0.40\textheight]{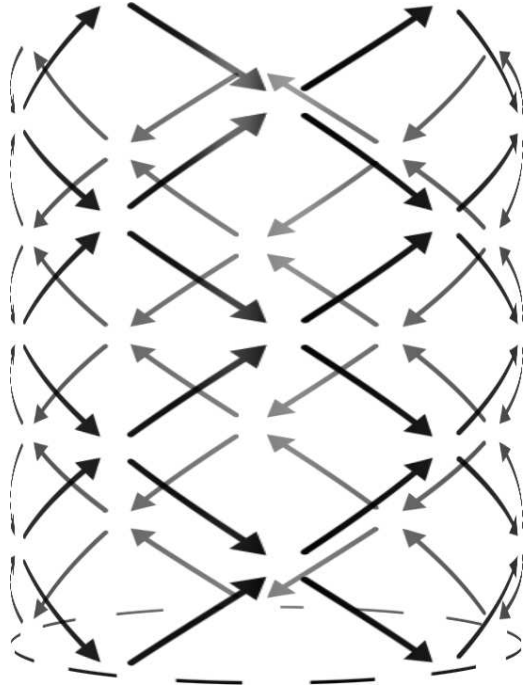}
	\caption{Auslander-Reiten quiver of $\nilp \tilde{A}_4$}
	\label{figure:LargerTube}
\end{figure}
\end{example}

\subsection{Calabi-Yau dimensions}

When $\AA$ is a Calabi-Yau category, it is well-known that the Calabi-Yau dimension coincides with the global dimension.  As Example \ref{example:fractionalCalabi-Yau} shows, we cannot expect an equally strong result in the case of fractionally Calabi-Yau categories.  The following example shows that the global dimension of a fractionally Calabi-Yau category might be arbitrarily high, even if the fractional Calabi-Yau dimension is not larger than one.

\begin{example}
Let $Q$ be the quiver $A_n$ $(n \geq 3)$ with zero relations on the composition of every two arrows, then $\rep Q$ has global dimension $n-1$, yet it is derived equivalent to $\rep A_n$ (without relations), and as such is Calabi-Yau of fractional dimension $\frac{n-1}{n+1}$.
\end{example}

The following proposition shows that in our case the fractional Calabi-Yau dimensions all lie between 0 and 1.  

\begin{proposition}\label{proposition:FractionalCYDim}
Let $\AA$ be a hereditary abelian category which is fractionally Calabi-Yau of dimension $d$, then $0 \leq d \leq 1$.
\end{proposition}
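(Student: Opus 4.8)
The plan is to extract numerical information from the relation $\bS^n \cong [m]$ by testing it on a well-chosen object and comparing with what hereditariness allows. Recall that $\bS \cong \t[1]$, so $\bS^n \cong \t^n[n]$, and hence the hypothesis reads $\t^n \cong [m-n]$ as autoequivalences of $\Db\AA$. First I would dispose of the trivial possibilities: if $\AA$ is semisimple then $\bS$ is already a shift, and a quick check pins down $d$. So I may assume $\AA$ is not semisimple, and by passing to an indecomposable summand I may assume $\Db\AA$ is a block; in particular there exist indecomposable objects $X,Y$ in $\AA[0]$ with $\Hom(X,Y)\neq 0$ or $\Ext^1(X,Y)\neq 0$, and more to the point there is some indecomposable $X \in \AA$ with $\End(X)\neq 0$ (namely $X$ itself has $1_X$).

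The key step is to locate the shift $[m-n]$ relative to the standard $t$-structure. Apply $\t^n \cong [m-n]$ to an object $X \in \AA[0]$: since $\AA$ is hereditary, $\t^n X$ decomposes as the sum of its homologies, but $\t$ is an autoequivalence of $\Db\AA$, so $\t^n X$ is again indecomposable, hence concentrated in a single degree $j$; the hypothesis forces $j = -(m-n)$, i.e. $\t^n X \cong X[m-n]$ sits in homological degree $-(m-n)$. Running this over all indecomposable $X \in \AA[0]$, and using the block hypothesis together with the fact that $\t$ and the shifts commute, one sees that $m - n$ is the same for every $X$ (this is where I'd invoke that suspended paths connect any two indecomposables up to shift: $\Hom(X,Y)\neq 0$ with both in $\AA[0]$ and $\t^n X \cong X[m-n]$, $\t^n Y \cong Y[m-n']$ gives $\Hom(X[m-n],Y[m-n'])\neq 0$, forcing $m-n = m-n'$). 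So put $p = m-n$; I must show $-1 \le p \le 0$, equivalently $0 \le n \le m \le 2n$, which is exactly $0 \le d \le 1$ after recalling $d = m/n$.

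For the inequality $p \le 0$: pick indecomposable $X,Y \in \AA$ with $\Hom(X,Y) \neq 0$ (possible since $\AA$ is not semisimple and $\Db\AA$ is a block — or simply take $X=Y$). Serre duality gives $\Hom(X,Y) \cong \Hom(Y,\bS X)^* = \Hom(Y, \t X[1])^*$, so $\Hom(Y,\t X[1]) \neq 0$. Iterating, $\Hom(Y, \t^n X[n]) \neq 0$, i.e. $\Hom(Y, X[m]) \neq 0$ using $\t^n X \cong X[p]$ and $p + n = m$. Since $\AA$ is hereditary, $\Hom(Y,X[m]) = \Ext^m(Y,X)$ vanishes for $m < 0$ and for $m > 1$; thus $m \in \{0,1\}$ — wait, that is too strong, so the correct move is the reverse: I should instead push $[m-n] = \t^n$ through and use that $\t^n X \cong X$ up to a shift lying in $\{0,1\}$ only when... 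Let me restate: from $\Ext^m(Y,X) \ne 0$ and hereditariness, $0 \le m \le 1$ would be forced, but that contradicts Example~\ref{example:fractionalCYTubes} where $m = n+1$ can be large. The resolution is that one must not take $\Hom(X,Y)\ne 0$ with arbitrary $X,Y$; rather one takes $X$ with $\End X \ne 0$, gets $\Hom(X,\t^n X[n]) = \Hom(X, X[m]) \ne 0$, hence $\Ext^m(X,X) \ne 0$, so $m \ge 0$; and dually, applying $\bS^{-1}$, $\Hom(X, \bS^{-n}X[?])$ type reasoning gives the cohomological degree of $\t^{-n}X$ is $p' = -(p)$... and combined with $\Ext^{\bullet}(X,X)$ being nonzero only in degrees $0,1$ one gets $p \in \{-1,0\}$ hence $m - n \in \{-1,0\}$, i.e. $n \le m \le 2n$ giving... no. I think the cleanest route, and the one I would actually write, is: the hypothesis $\t^n \cong [p]$ means the autoequivalence $\t$ has finite order in the group of autoequivalences modulo shift, so every indecomposable $X$ satisfies $\t^n X \cong X[p]$; applying $\Hom(X,-)$ and Serre duality $n$ times as above yields $\Ext^p(\t^? ,\cdot)$-type nonvanishing that, by hereditariness ($\Ext^i = 0$ for $i \ne 0,1$), forces $p \in \{0, -1, 1\}$ and a separate sign argument (testing $\bS$ versus $\bS^{-1}$, or using that $\t$ has no nonzero "negative shift" because $\AA$ has no $-1$-spherical configuration) eliminates $p = 1$, leaving $p \in \{-1,0\}$, i.e. $0 \le d \le 1$.

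The main obstacle is precisely this last sign discussion: showing $m - n \le 0$ (ruling out $d > 1$) cleanly. I expect to handle it by the Serre-duality iteration just sketched — $\Hom(X,X) \ne 0 \Rightarrow \Hom(X, \t^n X[n]) = \Ext^{n+p}(X,X) \ne 0 \Rightarrow n + p \le 1$, wait that needs $n+p = m \le 1$ again. So the genuinely correct statement must be that the relevant nonvanishing lands in degree $p$ (not $m$) after cancellation of the shifts built into iterated Serre duality, namely $\Hom(X,X) \cong \Hom(X, \bS^n X)^{\ast \cdots}$ unwinds to $\Hom(X, \t^n X[n]) = \Hom(X, X[p+n])=\Hom(X,X[m])$, and THIS is nonzero, forcing $0 \le m$; for the upper bound $m \le 2n$ I would instead feed in an object realizing $\Ext^1 \ne 0$ (which exists since $\AA$ is hereditary, not semisimple, and indecomposable, so some $\Ext^1(X,Y)\ne 0$): then $\Ext^1(X,Y) = \Hom(X,Y[1]) \ne 0$ and $\Hom(Y[1], \bS X) = \Hom(Y[1], \t X[1]) = \Hom(Y,\t X) \ne 0$; iterate to get $\Hom(Y, \t^n X) = \Hom(Y, X[p]) = \Ext^{p}(Y,X)$ — hmm this forces $0 \le p \le 1$. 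Combining $0 \le m$ with $0 \le p = m-n \le 1$ gives $n \le m \le n+1$, hence $1 \le d \le 1 + 1/n$, which is not $d \le 1$ either. Given this tension, I suspect the intended argument is subtler and uses the convexity/tube machinery or a global-dimension-type estimate; accordingly, the step I flag as the crux — and where I would spend the real effort — is establishing $m \le n$, and I would attempt it by showing that the shift $p = m-n$ must be $\le 0$ because a positive shift would make $\t^{-1}$ (hence $\bS^{-1}$) raise cohomological degree, contradicting that $\bS^{-1} \cong \t^{-1}[-1]$ together with $\Ext^{>1} = 0$ applied to a suitable indecomposable with $\End \ne 0$; concretely, $0 \ne \Hom(X,X) \cong \Hom(X, \bS^{-n} X)^\ast$-iterated $= \Hom(X, \t^{-n}X[-n])^? $, unwinding to $\Hom(X, X[-p-n]) = \Hom(X,X[-m])$, so $-m \le 1$ and $-m \ge 0$ is impossible unless $m=0$; the honest fix is that iterated Serre duality on $\Hom(X,X)$ gives $\Hom(X,X[m]) \ne 0$ AND $\Hom(X,X[-m]) \ne 0$ simultaneously (one from $\bS^n$, one from $\bS^{-n}$), whence $-1 \le -m$ and $m \le 1$... which again over-constrains. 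I will therefore, in the actual writeup, replace "$\Hom(X,X)$" by a carefully chosen object on which the iteration does not collapse to a single degree — most likely an object straddling two tubes as set up in the introduction is overkill here, so the right tool is probably just: the Euler form is well-defined on $K_0$ since $\AA$ is hereditary, $\bS^n \cong [m]$ implies the Coxeter-type transformation $\t$ acts with the property that $\langle X, X \rangle \ge 0$-type positivity for indecomposables forces $d \le 1$ by a short linear-algebra argument. That positivity/linear-algebra step over $K_0$ is what I expect to be the heart of the proof, and it is the step I would develop in full.
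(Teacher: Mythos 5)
There is a genuine gap here: you never actually establish either inequality. The proposal is a sequence of attempted arguments, each of which you yourself recognize as over-constraining (they would force $m\le 1$, contradicting the Dynkin case where $m=h-2$ is large), and it ends by deferring the ``heart of the proof'' to an undeveloped positivity claim for the Euler form on $K_0$. The recurring source of trouble is a misapplication of iterated Serre duality: from $\End(X)\ne 0$ one application of $\Hom(A,B)\cong\Hom(B,\bS A)^*$ gives $\Hom(X,\bS X)\ne 0$, but applying it again to the pair $(X,\bS X)$ returns $\End(\bS X)^*$, \emph{not} $\Hom(X,\bS^2X)$. So there is no reason for $\Hom(X,\bS^nX)=\Ext^m(X,X)$ to be nonzero, and indeed it vanishes whenever $m\ge 2$; every variant of the argument built on ``iterate Serre duality $n$ times'' collapses for this reason. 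The closing suggestion to argue via positivity of $\chi$ on $K_0$ is not substantiated and is not obviously true in the required generality, so it cannot be counted as a proof.

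For comparison, the paper's argument is short and purely about tracking the cohomological degree of $\t^nX\cong X[m-n]$. For $d\le 1$: there is always a path from $\t^nX$ to $X$ (via Auslander--Reiten triangles), and for $\AA$ hereditary a nonzero morphism between indecomposables $Y[i]\to Z[j]$ ($Y,Z\in\AA$) forces $j-i\in\{0,1\}$, so the degree never decreases along a path; since $\t^nX\cong X[m-n]$ sits in degree $-(m-n)$ and $X$ in degree $0$, this gives $m-n\le 0$. For $d\ge 0$: if $X\in\ind\AA[0]$ then $\t X\in\AA[0]$ unless $X$ is projective, in which case $\t X=\bS X[-1]\in\AA[-1]$; hence $n$ applications of $\t$ lower the degree by at most $n$, so $n-m\le n$, i.e.\ $m\ge 0$. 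Your instinct to ``locate the shift relative to the standard $t$-structure'' was the right one, but the mechanism is the degree-monotonicity of paths and of $\t$, not nonvanishing of $\Ext^m(X,X)$.
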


\begin{proof}
Let $\bS^n \cong [m]$ for some $n > 0$, thus $d = \frac{m}{n}$.  We find $\t^n \cong [m-n]$.  Since for every $X \in \ind \Db \AA$ there is a path from $\t^n X \cong X[m-n]$ to $X$ and thus (using that $\AA$ is hereditary) we have $m-n \leq 0$, and hence $d \leq 1$.

For the lower bound of $d$, let $X \in \ind \AA[0]$.  If $X$ is projective in $\AA[0]$ then $\bS X$ is an injective in $\AA[0]$ and hence $\t X \in \ind \AA[-1]$, otherwise $\t X \in \ind \AA[0]$.  It now follows from iteration that $\t^a X \in \AA[-b]$ where $a \geq b$, hence $n \geq n-m$ and thus $d \geq 0$.
\end{proof}

\subsection{Weighted projective lines}\label{subsection:WeightedProjectivesLines}

Weighted projective lines were first introduced in \cite{GeigleLenzing87} as lines in a weighted projective plane.    They can be seen as generalizations of projective lines where finitely many points $x$ have been given a weight $p(x) \in \bN$ strictly larger than 1.

Following more recent treatments, we will define a weighted projective line $\bX$ through the attached abelian category of coherent sheaves $\coh \bX$.

\begin{definition}\cite[Theorem 1]{Lenzing97}
A connected Ext-finite abelian hereditary noetherian category $\AA$ with a tilting complex and no nonzero projectives is said to be a \emph{category of coherent sheaves $\coh \bX$ over a weighted projective line $\bX$}.
\end{definition}

In our classification of fractionally Calabi-Yau categories, we will use following well-known characterization of hereditary categories with a tilting object.

\begin{theorem}\label{theorem:TiltingObject}\cite{Happel01}
Let $\HH$ be an indecomposable Ext-finite abelian hereditary category which admits a tilting object.  Then $\AA$ is derived equivalent to either
\begin{enumerate}
  \item $\mod A$, where $A$ is a finite dimensional hereditary algebra, or
  \item $\coh \bX$ where $\bX$ is a weighted projective line.
\end{enumerate}
\end{theorem}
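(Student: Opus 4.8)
The statement to be established is Happel's classification theorem, so the plan is to prove it by reducing to finite-dimensional algebras and then analysing the resulting quasi-tilted algebra.

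\textbf{Step 1: reduction to an endomorphism algebra.} Let $T \in \HH$ be the given tilting object and put $\Lambda = \End_\HH(T)$. Since $k$ is algebraically closed and $T$ has only finitely many indecomposable summands, $\Lambda$ is a finite-dimensional $k$-algebra. Because $\HH$ is hereditary, $T$ has no self-extensions in positive degrees, and, $T$ being a tilting object, it generates $\Db \HH$ as a triangulated category; hence $\RHom_\HH(T,-) \colon \Db \HH \to \Db(\mod \Lambda)$ is a triangle equivalence, so $\HH$ is derived equivalent to $\mod \Lambda$. Under this equivalence the $\Lambda$-modules correspond, via the tilting torsion pair on $\HH$, to complexes of $\Db \HH$ whose cohomology is concentrated in two consecutive degrees; expanding $\Ext^i_\Lambda(M,N) \cong \Hom_{\Db \HH}(Z_M, Z_N[i])$ in terms of $\Ext$-groups in $\HH$ then gives $\Ext^i_\Lambda(-,-) = 0$ for $i \geq 3$, i.e. $\gldim \Lambda \leq 2$, so $\Lambda$ is \emph{quasi-tilted}. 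It therefore suffices to exhibit, inside the derived-equivalence class of $\HH$, a hereditary abelian category of one of the two asserted shapes.

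\textbf{Step 2: the dichotomy on projectives.} Next I would split according to whether $\HH$ has a nonzero projective object. Note first that $\Db \HH \simeq \Db(\mod\Lambda)$ has a Serre functor, so $\HH$ has Serre duality and hence a nonzero projective if and only if a nonzero injective. Suppose $\HH$ has a nonzero projective. The existence of $T$ forces $K_0(\HH)$ to be a finitely generated free abelian group, and the indecomposable projectives sit at the left end of the Auslander--Reiten quiver of $\Db \HH$; from them one constructs a complete slice in $\Db \HH$, and the classical tilting theory of hereditary algebras shows that the corresponding tilt of $\HH$ --- again hereditary and derived equivalent to $\HH$ by Theorem \ref{theorem:Split} --- has a projective generator. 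That tilt is then $\mod H$ with $H = kQ$ for a finite acyclic quiver $Q$ (since $k$ is algebraically closed), giving case (1). If instead $\HH$ has no nonzero projective, then it has no nonzero injective either, so the standard $t$-structure on $\Db \HH$ is $\t$-invariant in the sense of Proposition \ref{proposition:TauInvariant}; after a suitable further $\t$-invariant tilt one may in addition arrange $\HH$ to be \emph{noetherian}, and being a connected Ext-finite abelian hereditary noetherian category with a tilting complex and no nonzero projectives, $\HH$ is then $\coh \bX$ for a weighted projective line $\bX$ by the characterization of \cite{Lenzing97} recalled above, giving case (2).

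\textbf{Main obstacle.} The real work lies in the no-projectives case: one must show that the periodic Auslander--Reiten components of $\HH$ form a $\bP^1$-indexed family of tubes $\ZAi / \langle \t^r \rangle$, all but finitely many homogeneous, and that these, together with the remaining non-periodic components and the way they glue together, are exactly those occurring in some $\coh \bX$ --- in effect the classification of quasi-tilted algebras of canonical type of Happel--Reiten--Smal\o{}. By contrast the projective case reduces fairly directly to the well-understood tilting theory of finite-dimensional hereditary algebras, and the bookkeeping of the successive tilts --- checking that each intermediate heart is again hereditary and derived equivalent to $\HH$ --- is exactly what Theorem \ref{theorem:Split} and Proposition \ref{proposition:TauInvariant} supply.
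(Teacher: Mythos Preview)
The paper does not prove this statement: Theorem~\ref{theorem:TiltingObject} is quoted from \cite{Happel01} and used as a black box in the classification of fractionally Calabi--Yau categories. There is therefore no proof in the paper to compare your proposal against.

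As for the proposal itself, the overall architecture --- pass to $\Lambda = \End_\HH(T)$, observe that $\Lambda$ is quasi-tilted, and then split on whether $\HH$ has nonzero projectives --- is indeed the shape of Happel's argument. But your Step~2 in the no-projectives case hides the actual content of the theorem. The sentence ``after a suitable further $\t$-invariant tilt one may in addition arrange $\HH$ to be noetherian'' is not a preliminary reduction; it is essentially the whole difficulty. Lenzing's characterisation \cite{Lenzing97} takes noetherianity as a \emph{hypothesis}, and producing a noetherian hereditary model inside the derived-equivalence class is exactly what must be proved. Neither Theorem~\ref{theorem:Split} nor Proposition~\ref{proposition:TauInvariant} gives you this: they tell you that a $\t$-invariant aisle yields a hereditary heart, but say nothing about noetherianity of that heart. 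Happel's actual argument does not proceed by tilting to a noetherian category and invoking \cite{Lenzing97}; rather, it goes through the trichotomy theorem for quasi-tilted algebras (Happel--Reiten--Smal{\o}) and a detailed study of the module category of $\Lambda$, in particular of its separating tubular families when $\Lambda$ is of canonical type. Your ``Main obstacle'' paragraph correctly names the structural facts that must be established, but the route you sketch to get there is circular.
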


In the case of a weighted projective line, this tilting object can be chosen to be a canonical algebra introduced by Ringel in \cite{Ringel84}.  Such an algebra can be described as the path algebra of a quiver as in Figure \ref{figure:TiltingComplexWeighted} where $t \geq 2$ and with relations $f_{i}^{p_{i}} = f_{2}^{p_{2}} - \lambda_{i} f_{1}^{p_{1}}$, for all $2 \leq i \leq t$, where $\lambda_i \not= \lambda_j$ for $i \not= j$.

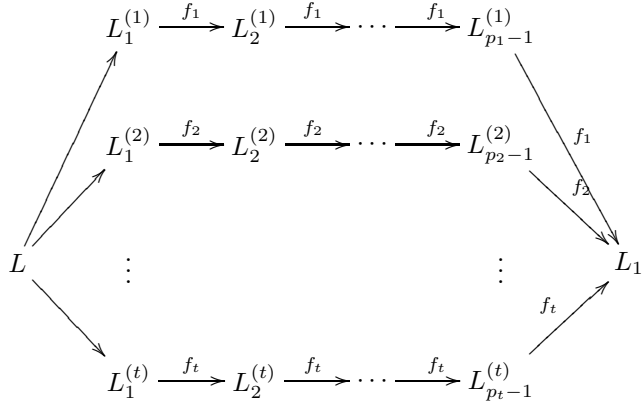
\begin{figure}
	\centering
		$$\xymatrix{
& L^{(1)}_{1} \ar[r]^{f_{1}} & L^{(1)}_{2} \ar[r]^{f_{1}} & \cdots \ar[r]^{f_{1}} & L^{(1)}_{p_{1} - 1} \ar[rdd]^{f_{1}} & \\
& L^{(2)}_{1} \ar[r]^{f_{2}} & L^{(2)}_{2} \ar[r]^{f_{2}} & \cdots \ar[r]^{f_{2}} & L^{(2)}_{p_{2} - 1} \ar[rd]^{f_{2}}& \\
L \ar[ruu] \ar[ru] \ar[dr]&  \vdots & && \vdots & L_1 \\
& L^{(t)}_{1} \ar[r]^{f_{t}} & L^{(t)}_{2} \ar[r]^{f_{t}} & \cdots \ar[r]^{f_{t}} & L^{(t)}_{p_{t} - 1} \ar[ru]^{f_{t}}&}$$
	\caption{The tilting complex of a weighted projective line.}
	\label{figure:TiltingComplexWeighted}
\end{figure}

The canonical algebra $\Lambda$ given by a $t$-tuple of weights $\mathbf{p}=(p_1, p_2, \ldots, p_t)$, and a $(t-2)$-tuple $\mathbf{\lambda} = (\lambda_3, \ldots, \lambda_t)$ of pairwise distinct elements of $k$, will be denoted by $\Lambda(\mathbf{p},\mathbf{\lambda})$.  We will call $\mathbf{p}$ the \emph{weight type} of $\Lambda$ and of the weighted projective line $\bX$.

Let $\bX$ be a weighted projective line of weight type $\mathbf{p}=(p_1, p_2, \ldots, p_t)$.  We will call
$$\chi_\HH = 2 - \sum_{i=1}^t \left( 1 -\frac{1}{p_i} \right)$$
the \emph{Euler characteristic} of $\HH = \coh \bX$.  We will only be interested in the case where $\chi_\HH = 0$.  Such a category $\HH$ will be called \emph{tubular}; every Auslander-Reiten component is a tube and the category $\HH$ is fractionally Calabi-Yau of dimension one (\cite{Kussin09}).

Note that $\chi_\HH = 0$ implies that the weight type of $\bX$ can only be one of the following: $(2,2,2,2), (3,3,3), (2,4,4),$ or $(2,3,6)$, thus the corresponding canonical algebra is as in Figure \ref{figure:Canonical}. By removing the final vertex from any of these quivers, one obtains an extended Dynkin quiver.

\begin{figure}[htb]
	\centering
		$$\xymatrix@R=10pt{
& \cdot \ar[rdd]\\
& \cdot \ar[rd]\\
\cdot \ar[ruu] \ar[ru] \ar[dr] \ar[ddr] & & \cdot\\
& \cdot \ar[ru]\\
& \cdot \ar[ruu]\\ 
& \cdot \ar[r] & \cdot \ar[rd]\\
\cdot \ar[ru] \ar[r] \ar[rd] & \cdot \ar[r] & \cdot \ar[r] & \cdot\\
& \cdot \ar[r] & \cdot \ar[ru]\\ 
& & \cdot \ar[rrd]\\
\cdot \ar[r] \ar[rru] \ar[dr]& \cdot \ar[r] & \cdot \ar[r] & \cdot \ar[r] & \cdot \\
& \cdot \ar[r] & \cdot \ar[r] & \cdot \ar[ru]\\ 
&&& \cdot \ar[rrrd]\\
\cdot \ar[rr] \ar[rrru] \ar[dr]&& \cdot \ar[rr] && \cdot\ar[rr] && \cdot \\
& \cdot \ar[r] & \cdot \ar[r] & \cdot \ar[r] & \cdot \ar[r] & \cdot \ar[ru]
}$$
	\caption{Canonical algebras with weight type $(2,2,2,2), (3,3,3), (2,4,4),$ and $(2,3,6)$, respectively.}
	\label{figure:Canonical}
\end{figure}
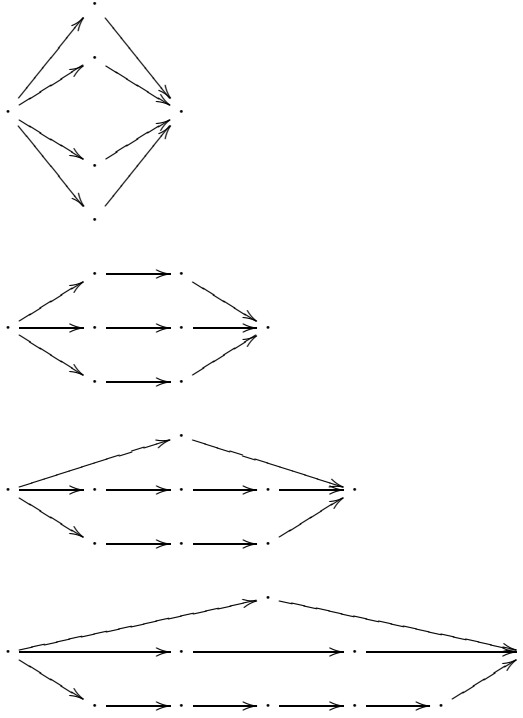

If $\chi_\HH < 0$, then $\HH$ is derived equivalent to the category of finitely presented modules over a tame hereditary algebra.  If $\chi_\HH > 0$, then $\HH$ is wild.  In neither of these two cases, $\HH$ is fractionally Calabi-Yau.

Let $\HH_f$ and $\HH_+$ denote the full subcategories of $\HH$ consisting of objects of finite length and those of all objects without simple subobjects, respectively.  Every object of $\HH$ is a direct sum of an object in $\HH_+$ and of an object in $\HH_f$. There are no nonzero morphisms from objects of $\HH_f$ to objects of $\HH_+$.  The category $\HH_f$ is an infinite coproduct of tubes.

Let $\chi(X,Y) = \dim \Hom(X,Y) - \dim \Ext(X,Y)$ be the usual Euler form.  Define the \emph{average Euler form}
$$\achi(X,Y) = \frac{1}{p} \sum_{j=0}^{p-1}\chi(\t^j X,Y)$$
where $p = \operatorname{lcm}\{p_1, p_2, \ldots, p_t\}$ having values in $\frac{1}{p} \bZ$.  For every $X \in \Ob \HH$, define the \emph{rank} and \emph{degree} as
\begin{eqnarray*}
\rk X &=& \achi(X,E) \\
\deg X &=& \achi(L,X) - \achi(L,L) \rk X
\end{eqnarray*}
where $E$ is a (generalized) 1-spherical object in $\HH_f$ and $L$ is an exceptional vector bundle with $\rk L = 1$.  Note that $\rk X \geq 0$.  The \emph{slope} of a nonzero $X$ is defined as $\mu(X) = \frac{\deg X}{\rk X} \in \bQ \cup \{\infty\}$.  The indecomposables $X$ with $\mu(X) =  \infty$ are exactly the ones of finite length.

A nonzero object $X$ is called \emph{semi-stable} if and only if $\mu X' \leq \mu X$ for every nonzero subobject $X'$ of $X$.  From this follows that $\mu X \leq \mu X''$ for every nonzero quotient object of $X''$.  In particular, if there is a nonzero map $X \to Y$ between semistable objects then then $\mu X \leq \mu Y$.

All semi-stable objects of the same slope $q$ together with the zero object form an exact abelian hereditary category.  We denote this category by $\HH^{(q)}$.  When $\chi_\HH = 0$ (thus $\bX$ is tubular), then every indecomposable object is semi-stable.  For $q,q' \in \QQ \cup \{\infty\}$, there is an equivalence $\HH^{(q)} \cong \HH^{(q')}$, and we have that $\HH^{(\infty)} \cong \HH_f$ as subcategories of $\HH$.

We have the following formula for weighted projective lines of tubular type
$$\achi(X,Y) = \rk X \deg Y - \deg X \rk Y.$$
Note that this implies that $\oplus_{j=0}^{p-1} \Hom(\t^j X,Y) \not= 0$ when $\mu X < \mu Y$.  Thus there is a morphism from every tube in $\HH^{(q)}$ to every tube in $\HH^{(q')}$ when $q < q'$.

For a more comprehensive treatment of weighted projective lines, we refer to \cite{Lenzing07} and the references therein.
\section{Classification of hereditary fractionally Calabi-Yau categories}

The proof of the classification of hereditary fractionally Calabi-Yau categories splits in two cases: one where the fractional Calabi-Yau dimension is strictly smaller than 1, and one where the fractional Calabi-Yau dimension is equal to 1.  The former case will be easily shown to be equivalent to $\mod Q$ where $Q$ is a Dynkin quiver.  We thus start by discussion the latter case.

\subsection{Categories with fractional Calabi-Yau dimension 1}

Let $\AA$ be an indecomposable hereditary category with fractionally Calabi-Yau dimension 1.  Thus there is an $n \in \bN$ such that $\t^n$ is isomorphic to the identity functor on $\AA$.  Theorems \ref{theorem:TubeCriterium} and \ref{theorem:TubeMain} imply that every Auslander-Reiten component is a generalized standard tube.  If there is only one such tube, then $\AA$ is equivalent to the category of finite dimensional nilpotent representations of an $\tilde{A}_n$-quiver with linear orientation (Proposition \ref{proposition:SphericalInTube}).  Thus we may assume there are at least two tubes in $\AA$.

We will furthermore assume that $\AA$ is not 1-Calabi-Yau.  The main reason for this last restriction is given in Proposition \ref{proposition:LS} below: such categories will always have an exceptional object.  We start with an observation.

\begin{corollary}\label{corollary:CalabiYauOnObjects}
Let $\AA$ be an abelian category which is not 1-Calabi-Yau, then there is an indecomposable object $X \in \ind \Db \AA$ such that $\t X \not\cong X$.
\end{corollary}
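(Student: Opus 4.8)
The plan is to prove the contrapositive: if $\t X \cong X$ for every indecomposable $X \in \ind \Db \AA$, then $\AA$ is $1$-Calabi--Yau, contradicting the standing hypothesis. The first step is to read off the shape of the Auslander--Reiten quiver. Since every indecomposable $X$ satisfies $\t X \cong X$, Theorem~\ref{theorem:TubeCriterium} (applied with $r=1$) shows that \emph{every} Auslander--Reiten component of $\Db \AA$ is a tube, and the hypothesis forces each of these tubes to have rank $1$, i.e.\ to be homogeneous. By Theorem~\ref{theorem:TubeMain} each component is generalized standard, and the unique quasi-simple (peripheral) object $S_{\KK}$ of a component $\KK$ is a generalized $1$-spherical object with $\End(S_{\KK}) \cong k$ and $\bS S_{\KK} \cong S_{\KK}[1]$.

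Next I would upgrade the pointwise isomorphisms $\bS X \cong X[1]$ to a natural isomorphism of triangle functors $\bS \cong [1]$. Because $\AA$ is hereditary and, being fractionally Calabi--Yau of dimension $1$, has no nonzero projective or injective objects, the translate restricts to an exact autoequivalence $\t_{0}\colon \AA \to \AA$; since every object of $\Db\AA$ is a finite direct sum of shifts of its homologies, it suffices to produce a natural isomorphism $\t_{0} \cong \id_{\AA}$, which then gives $\t \cong \id$ on $\Db\AA$ and hence $\bS \cong \t[1] \cong [1]$. To build $\t_{0} \cong \id_{\AA}$ I would fix isomorphisms $\phi_{\KK}\colon \t_{0}S_{\KK} \to S_{\KK}$ on the quasi-simple objects and then propagate and rescale these choices to a compatible family of isomorphisms on all of $\ind \AA$, using the convexity of tubes (Theorem~\ref{theorem:TubeMain}(2)) to control morphisms inside a component and the indecomposability of $\AA$ --- any two indecomposables are joined by a (suspended) path --- to glue the choices across distinct tubes. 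Alternatively, once one knows that every component is a homogeneous tube, one may simply invoke the classification of abelian hereditary $1$-Calabi--Yau categories from \cite{vanRoosmalen08}, whose argument proceeds under exactly this hypothesis, to identify $\AA$ with $\coh \bX$ for an elliptic curve $\bX$ or with $\nilp \tilde A_{0}$; both are $1$-Calabi--Yau, so $\t \cong \id$.

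The crux is this last promotion step. A functor which is pointwise isomorphic to the identity need not be naturally isomorphic to it: the obstruction is carried by the action on morphisms, exactly as an outer automorphism of a group is a non-trivial self-equivalence of its one-object delooping which nonetheless fixes the unique object. So the argument must genuinely use the rigidity furnished by the tube structure (homogeneity and generalized standardness of all components) together with the connectedness of $\AA$; checking that the local choices $\phi_{\KK}$ can be made globally coherent is where the real work lies.
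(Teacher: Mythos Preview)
Your ``alternative'' route --- invoking the classification in \cite{vanRoosmalen08} once you know $\tau X \cong X$ for all indecomposable $X$ --- is exactly the paper's proof. The paper's argument is even shorter than yours: it does not first pass through Theorems~\ref{theorem:TubeCriterium} and~\ref{theorem:TubeMain} to deduce that all components are homogeneous tubes, but simply observes that the \emph{proof} of the classification in \cite{vanRoosmalen08} uses only the objectwise condition $\bS X \cong X[1]$ (never the natural isomorphism $\bS \cong [1]$), so under the contrapositive hypothesis that classification applies verbatim and identifies $\AA$ as one of the listed $1$-Calabi--Yau categories. Your preliminary tube analysis is correct but redundant for this route.

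Your primary route --- promoting the pointwise isomorphisms $\tau X \cong X$ to a natural isomorphism $\tau_0 \cong \id_\AA$ by propagating choices along paths --- is a genuinely different and more self-contained strategy, but as you rightly flag, the global coherence check is nontrivial and you leave it as a sketch. The paper sidesteps this entirely by outsourcing the work to \cite{vanRoosmalen08}. One small correction: you justify the absence of nonzero projectives by appealing to the ambient fractional Calabi--Yau dimension~$1$, but this already follows directly from the hypothesis $\tau X \cong X$ for all indecomposables (a nonzero projective $P$ would have $\tau P$ injective in $\AA[-1]$, contradicting $\tau P \cong P \in \AA[0]$).
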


\begin{proof}
The proof of the classification of abelian 1-Calabi-Yau categories in \cite{vanRoosmalen08} does not use that $\bS \cong [1]$, but only that $\bS X \cong X[1]$ for all $X \in \Ob \Db \AA$.  In particular, if $\bS X \cong X[1]$ for all $X \in \Ob \Db \AA$ then $\AA$ is 1-Calabi-Yau.  If we assume that $\AA$ is not 1-Calabi-Yau, then there is an object $X \in \Db \AA$ such that $\t X \not\cong X$.
\end{proof}

\subsubsection{Choosing $L$ and $S$}  If the Auslander-Reiten component of $\AA$ has only one tube, then the classification follows easily from Proposition \ref{proposition:SphericalInTube}.  In case there is more than one tube, we will need to choose two elements as in the following proposition.

\begin{proposition}\label{proposition:LS}
Let $\AA$ be an indecomposable fractionally Calabi-Yau category with $\CYdim \AA = 1$, then every Auslander-Reiten component is a generalized standard tube.  If $\AA$ is not equivalent to $\mod \tilde{A}_n$, then there are two peripheral objects $L,S \in \ind \Db \AA$ lying in different Auslander-Reiten components with
\begin{enumerate}
  \item $\Ext(L,L) = 0$,
  \item $\Hom(L,S) \cong k$,
  \item $\Hom(L,\t^{-i} S)=0$ for $0 < i < s$,
\end{enumerate}
where $s$ is the rank of the Auslander-Reiten component containing $S$.
\end{proposition}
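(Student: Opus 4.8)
The plan is to start from the existence of an exceptional object, which Corollary~\ref{corollary:CalabiYauOnObjects} together with the assumption that $\AA$ is not 1-Calabi-Yau will provide, and then manipulate it into a pair $(L,S)$ with the required Hom/Ext vanishing properties. First I would recall that, by Theorems~\ref{theorem:TubeCriterium} and~\ref{theorem:TubeMain}, every Auslander-Reiten component of $\Db\AA$ is a generalized standard tube, and that Proposition~\ref{proposition:SphericalInTube} identifies each such tube with the category of nilpotent representations of a cyclically oriented $\tilde{A}_n$; moreover Theorem~\ref{theorem:TubeMain}(3) lets me pass to a $\t$-invariant tilt in which each tube becomes a simple tube, so I may assume all peripheral objects are simple in the relevant heart. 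Since $\AA$ is not equivalent to $\mod\tilde{A}_n$ (equivalently, to $\nilp\tilde{A}_n$ up to derived equivalence) there are at least two tubes, and since $\AA$ is not 1-Calabi-Yau, by Corollary~\ref{corollary:CalabiYauOnObjects} there is an object $X$ with $\t X\not\cong X$; because every indecomposable lies in some component (and components with a finite $\t$-orbit are exactly the tubes by Theorem~\ref{theorem:TubeCriterium}), this means some tube has rank $r\geq 2$, hence contains a peripheral object $L$ which is exceptional: $\End(L)\cong k$ and $\Ext(L,L)=0$ since in a simple tube a peripheral object $L$ is simple and $\Ext^1(L,L)=0$ precisely when $L$ is not the (unique) object of a homogeneous tube — which is guaranteed by $r\geq 2$. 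This gives condition~(1).

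Next I would produce $S$. Since $\AA$ is indecomposable, $\Db\AA$ is a block, so there is a suspended path from $L$ to some indecomposable $Y$ in a tube $\KK'\neq\KK_L$; using that $\AA$ is hereditary and the description $X\cong\bigoplus_i H^i(X)[-i]$, together with Corollary~\ref{corollary:TubesDirecting} (which forbids $\Ext$ in one direction once there is a nonzero $\Hom$ between distinct tubes), I can normalize so that there is in fact a nonzero map $\Hom(L,Y')\neq 0$ for some $Y'$ in a tube different from $\KK_L$. Working inside that tube, identified with $\nilp\tilde{A}_s$, I would then choose $S$ to be a peripheral (simple) object of $\KK'$ in the $\t$-orbit of $Y'$ minimizing $\dim\Hom(L,S)$ over the finitely many $\t$-twists $S,\t S,\dots,\t^{s-1}S$; shrinking $Y'$ along the tube filtration (as in the proof of Lemma~\ref{lemma:SphericalLength}, using the socle/top structure of modules over $\tilde{A}_s$) lets me assume the map $L\to S$ lands in a simple quotient, so that composing with a further $\Hom(L,\t^{-i}S)$ could only come from a shorter composition — this is how I would force $\dim\Hom(L,S)=1$ and $\Hom(L,\t^{-i}S)=0$ for $0<i<s$. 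The key point is that $\bigoplus_{i=0}^{s-1}\Hom(L,\t^i S)$ is controlled by a single nonzero map because $\bigoplus_{i=0}^{s-1}\t^i S$ is a generalized 1-spherical object and one can use the twist functor $T_S$ (an autoequivalence by Theorem~\ref{theorem:TwistDerived}) to compare $L$ with its image, exactly as in Lemma~\ref{lemma:SphericalLength}.

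I expect the main obstacle to be the simultaneous achievement of conditions~(2) and~(3): getting the Hom-space to be \emph{exactly} one-dimensional while killing the Hom's into all intermediate $\t$-twists of $S$. The danger is that after replacing $Y'$ by a peripheral object one either loses the nonzero map to $\KK'$ entirely, or the map factors through several simples of the tube and one cannot cut it down to a single simple quotient without re-introducing maps to other $\t^{-i}S$. The resolution I would pursue is to run an explicit induction on $\dim\Hom(L,Y')$ for $Y'$ ranging over indecomposables of $\KK'$ with $\Hom(L,Y')\neq 0$, at each step either quotienting $Y'$ by its radical (the map survives because $L$ is exceptional and the radical is proper) or passing to a submodule, keeping track of which simple of the cycle appears as the top; since the cycle has only $s$ vertices this terminates, and minimality of the length forces $\Hom(L,-)$ to be supported on a single peripheral object with one-dimensional value, which is precisely~(2) and~(3). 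A secondary, smaller point to check is that $L$ and $S$ genuinely lie in different components — but this is automatic since $L\in\KK_L$ and $S\in\KK'$ with $\KK'\neq\KK_L$ by construction, and Corollary~\ref{corollary:TubesDirecting} guarantees consistency of the $\Hom/\Ext$ behaviour.
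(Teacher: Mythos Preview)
Your setup through condition~(1) is correct and matches the paper: Corollary~\ref{corollary:CalabiYauOnObjects} gives a nonhomogeneous tube, hence an exceptional peripheral $L$. The genuine gap is exactly where you suspect it, and your proposed resolution does not close it.

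The inductive shrinking of $Y'$ inside $\KK'$ can at best reduce $Y'$ to a simple (peripheral) object $S$, but there is no reason that $\dim\Hom(L,S)$ drops to $1$ along the way: $L$ is not in $\KK'$, so the filtration of $Y'$ by simples of $\KK'$ gives no control over the multiplicity of maps from $L$ into a fixed simple $S$. Your claim that $\bigoplus_i \Hom(L,\t^i S)$ is ``controlled by a single nonzero map'' because $\bigoplus_i \t^i S$ is generalized $1$-spherical is not an argument --- the spherical condition constrains $\End(E)$ and $\Ext^1(E,E)$, not $\Hom(L,E)$ for an arbitrary $L$ outside the tube. Likewise, applying the twist $T_E$ to $L$ produces an extension of $L$ by summands of $E$, which tells you nothing about $\dim\Hom(L,S)$ itself.

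The paper's argument is genuinely different. After producing an exceptional peripheral $Y$ in a tube distinct from $\KK_L$ with $\Hom(Y,L)\neq 0$ (via $Y=T_E^*L$), it iterates twists $T_{\t^i L}^*$ on the $Y$-side to kill $\Hom(Y',\t^j L)$ for $0<j<l$; this is what eventually becomes condition~(3) after Serre duality. The step you are missing is the bound $\dim\Hom(M,L)\leq 2$: the paper obtains it by observing that $M\oplus L$ is a partial tilting object, so $\Db\mod\End(M\oplus L)$ embeds fully faithfully into $\Db\AA$; since every indecomposable of $\Db\AA$ lies in a standard tube one has $\chi(Z,Z)\geq 0$ for all indecomposables, which forces $\End(M\oplus L)$ to be non-wild, i.e.\ an $n$-Kronecker with $n\leq 2$. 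The case $\dim\Hom(M,L)=1$ gives $S=\bS M$ directly; the case $\dim\Hom(M,L)=2$ requires a further step inside $\mod kK_2$ to produce a suitable regular module. None of this is visible from inside the tube $\KK'$, which is why your local induction cannot succeed.
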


\begin{proof}
Since $\AA$ is fractionally Calabi-Yau of dimension $1$, there is an $n \in \bN$ such that $\bS^n \cong [n]$, and hence $\t^n \cong 1_{\AA}$.  It follows from Theorem \ref{theorem:TubeCriterium} that every Auslander-Reiten component is a generalized standard tube of rank at most $n$.

As stated in Corollary \ref{corollary:CalabiYauOnObjects}, we may assume that $\t$ is not the identity on objects, hence there is a peripheral object with $L \not\cong \t L$.  In particular, $\dim \Ext(L,L)=0$.

If $\AA$ is indecomposable and all objects lie in the same Auslander-Reiten component, then $\AA$ is equivalent to $\nilp \tilde{A}_n$ (see Proposition \ref{proposition:SphericalInTube}).  We will therefore assume that not all objects lie in the same Auslander-Reiten component, hence there is another peripheral object $X \in \ind \Db \AA$ such that $\Hom(L,X) \not= 0$ or $\Hom(X,L) \not= 0$, and $L$ and $X[n]$ lie in different tubes for all $n \in \bZ$.  Possibly by replacing $X$ by $\bS X$, we find that $\Hom(L,X) \not= 0$.

Let $E \cong \oplus_{i=1}^r \t^{i} X$ where $r$ is the rank of the tube containing $X$ and consider the object $Y = T^*_E L$, thus there is an exact triangle
$$Y \to L \to \RHom(L,E)^* \otimes E \to Y[1].$$
Since $E$ is generalized 1-spherical, we know that $T^*_E: \Db \AA \to \Db \AA$ is an autoequivalence and hence $Y$ is also a peripheral and exceptional object.  Furthermore, $L$ and $Y$ lie in different tubes so that $\Ext(Y,L) = 0$ by Corollary \ref{corollary:TubesDirecting}.

We may assume that $\Hom(Y,\t^i L)= 0$ when $0 < i < l$ where $l$ is the rank of the tube containing $L$.  Indeed, Let $0 < i < l$ be the smallest integer such that $\Hom(Y, {\t^i L}) \not= 0$.  It follows readily from applying the functor $\Hom(-,\t^j L)$ to the triangle
$$T^*_{\t^{i} L} Y \to Y \to \Hom(Y,\t^i L)^* \otimes_k \t^{i} L \to T^*_{\t^{i} L} Y[1]$$
that $\Hom(T^*_{\t^{i} L} Y,{\t^{j} L}) = 0$ when $0 < j \leq i$, and that $\Hom(T^*_{\t^{i} L} Y, L) \not= 0$.  Also if $Y$ does not have any indecomposable objects lying in the tube containing $L$, then neither does $T^*_{\t^{i} L} Y$.  Iteration shows that there is an object $Y'$ such that $Y'$ maps nonzero to $L$, but admits no nonzero maps to $\t$-shifts of $L$ which are not isomorphic to $L$.  Moreover, no indecomposable direct summand of $Y'$ lies in the tube containing $L$.

Furthermore, since $L$ is exceptional and thus lies in a nonhomogeneous tube, we infer that there is an indecomposable direct summand $M$ of $Y'$ which does not lie in a homogeneous tube and such that $\Hom(M,L) \not= 0$.  Note that $M$ and $L$ lie in different tubes so that Corollary \ref{corollary:TubesDirecting} shows that $\Ext(M,L) = 0$.  By possibly replacing $M$ by a peripheral object in the same tube, we may assume that the following conditions hold:
\begin{itemize}
\item $\Hom(M,L) \not= 0$,
\item $\Ext(M,L) = \Ext(L,M) = 0$ (since $\Ext(L,M) \cong \Hom(M,\t L)^* = 0$),
\item $M$ and $L$ are exceptional (since both are peripheral objects in nonhomogeneous tubes).
\end{itemize}
Hence $M \oplus L$ is a partial tilting complex in $\Db \AA$ and there is a fully faithful and exact functor $\Db \mod A \to \Db \AA$ where $A = \End(M \oplus L)$.  Note that $M \oplus L$ is a finite dimensional hereditary algebra.  Since every object $Z \in \ind \Db \AA$ lies in a tube, we know that $\dim \Hom(Z,Z) - \dim \Hom(Z,Z) \geq 0$.  The same thus holds for every indecomposable in $\Db \mod A$, and hence $A$ is not a wild hereditary algebra.  We infer that $\dim \Hom(M,L) \leq 2$.

If $\dim \Hom(M,L) = 1$, then we choose $L= L$ and $S = \bS M$.

If $\dim \Hom(M,L) = 2$, then $A$ is the path algebra of the Kronecker quiver.  The category $\mod A$ is well-known and there is an (indecomposable) regular module $S' \in \mod A$ such that $\dim \Ext(S',S') = \dim \Hom(S',S') = \dim \Hom(L,S') = 1$.  The image of $S'$ under the functor $\Db \mod A \to \Db \AA$ satisfies the same properties.  Thus there is a peripheral object $S$ in the tube containing this image such that $L$ and $S$ satisfy the required properties.
\end{proof}

We will assume $L$ and $S$ have been chosen as in Proposition \ref{proposition:LS} above.  We will denote by $s$ the rank of the tube containing $S$, and will write $E = \oplus_{i=1}^s \t^{-i} S$ and $A = \End E$.

\subsubsection{The autoequivalence $t: \Db \AA \to \Db \AA$} Let $L,S,$ and $E$ be as above.  As in the 1-Calabi-Yau case \cite{vanRoosmalen08}, we will consider the autoequivalence $t = T_E : \Db \AA \to \Db \AA$.  Note that since $L$ is a peripheral object not lying in the same tube as $S$, we know that $t L$ will lie in a different tube as $L$.

For every $i \in \bZ$, we have following triangle
\begin{equation}\label{equation:TriangleDefiningL}
t^i L \to t^{i+1} L \to \Ext(E,t^{i} L) \otimes_A E \to t^i L[1].
\end{equation}
In general, the $A$-modules $\Ext(E,t^i L)$ and $\Ext(E,t^j L)$ will not be isomorphic.  However, since $t S \cong \t^{-1} S$, we have $\Ext(E,t^i L) \cong \Ext(E,t^j L)$ as $A$-modules when $i-j$ is a multiple of $s$.  We also note that $t E \cong E$.

The triangles in (\ref{equation:TriangleDefiningL}) for different $i$'s are thus related by applying a certain power of the autoequivalence $t$.

\begin{remark}\label{Remark:LS}
Proposition \ref{proposition:LS} yields that $\Ext(E,t^{i} L) \otimes_A E \cong \Hom(t^{i} L, E)^* \otimes_A E \cong \t^{-i} S$.
\end{remark}

\subsubsection{The sequence $\EE$ and a $t$-structure in $\Db \AA$}\label{subsubsection:tFractional}

We will consider the sequence $\EE = (L_{i})_{i\in \bZ}$ where $L_i = t^{is} L$.  Theorem \ref{theorem:Split} shows the aisle $\UU$ given by
$$\ind \UU = \{ X \in \ind \Db \AA \mid \mbox{There is a path from $L_i$ to $X$, for an $i \in \bZ$} \}.$$
defines a $t$-structure with a hereditary heart $\HH$.  This does indeed define a bounded $t$-structure, as there are paths from $E[-1]$ to $L_{i}$ and thus by Theorem \ref{theorem:DirectingTubes}, we see that no direct summand of $E[-1]$ lies in $\ind D^{\leq 0}$.  It is now easy to see that $E \in \HH$ and $L_i \in \HH$, for all $i \in \bZ$.

The functor $t: \Db \AA \to \Db \AA$ restricts to an autoequivalence on $\HH$ which we will also denote by $t$.  By the choice of this $t$-structure, the triangles (\ref{equation:TriangleDefiningL}) correspond to short exact sequences (see Remark \ref{Remark:LS})
$$0 \to t^i L \to t^{i+1} L \to \t^{-i-1} S \to 0.$$
We find the following relations in $\HH$ between $L_i$ and $L_{i+1}$, for all $i \in \bZ$
$$0 \to L_i \to L_{i+1} \to B \to 0$$
where $B$ is given as an extension between the objects $(\t^{-i} S)_{0 \leq i < s}$.

\begin{lemma}\label{lemma:B}
The object $B$ as above is indecomposable and lies in $\TT_E$, the tube containing every indecomposable direct summand of $E$.    Furthermore, if $X \in \Ob \HH$ such that no direct summand of $X$ lies in $\TT_E$, then $\dim \Hom(X,B) = \dim \Hom(X,E)$.
\end{lemma}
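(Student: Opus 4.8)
The object $B$ is obtained as the cokernel of $L_i \to L_{i+1}$, hence as an iterated extension of the objects $\t^{-i}S, \t^{-i-1}S, \dots, \t^{-i-s+1}S$, which are precisely the indecomposable direct summands of $E$ (up to $\t$-shift). In particular $B$ lies in the additive closure of the tube $\TT_E$. First I would verify that $B$ is indecomposable by exhibiting it as the image of the map $t^i L \to t^{i+1} L$ coming from the triangle \eqref{equation:TriangleDefiningL} and using that $t$ is an autoequivalence: since $L$ is a peripheral object in a tube and $tL$ lies in a different tube than $L$, the map $L_i \to L_{i+1}$ is nonzero, so its cokernel $B$ is nonzero; to see it is indecomposable one can argue directly inside the simple tube $\TT_E$ (after passing to the $t$-structure of Theorem \ref{theorem:TubeMain}(3) where the direct summands of $E$ become simple), where $B$ has simple socle $\t^{-i}S$ coming from the short exact sequence, so $B$ is a uniserial object of length $s$ with prescribed composition factors, hence indecomposable.

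For the second assertion, fix $X \in \Ob\HH$ no direct summand of which lies in $\TT_E$. The key input is Corollary \ref{corollary:TubesDirecting}: for any indecomposable direct summand $X'$ of $X$ and any indecomposable object $Z$ of $\TT_E$ we have $\Ext(X',Z)=0$ as soon as $\Hom(X',Z)\neq 0$, but in fact since $X'$ and $Z$ lie in different tubes, Serre duality gives $\Ext(X',Z)\cong\Hom(Z,\t X')^*$, and $\t X'$ again does not lie in $\TT_E$ while $Z$ does, so Corollary \ref{corollary:TubesDirecting} (applied to $Z$ and $\t X'$ in different tubes) forces $\Hom(Z,\t X')=0$ whenever $\Hom(\t X', Z)\ne 0$; combined with the convexity of tubes (Theorem \ref{theorem:TubeMain}(2)) one deduces $\Ext(X',Z)=0$ for all $Z\in\TT_E$, hence $\Ext(X,B)=0$ and $\Ext(X,E)=0$. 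Now apply $\Hom(X,-)$ to the short exact sequences building up $B$ from the $\t^{-j}S$: since all the relevant $\Ext^1$ terms vanish, $\Hom(X,-)$ is exact on these sequences, so $\dim\Hom(X,B)=\sum_{j} \dim\Hom(X,\t^{-j}S)$ where the sum runs over the $s$ summands. The same vanishing applied to $E=\oplus_{j=1}^{s}\t^{-j}S$ gives $\dim\Hom(X,E)=\sum_{j}\dim\Hom(X,\t^{-j}S)$ as well, and the two sums coincide.

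The main obstacle is the interface between the vanishing of $\Ext$ and the directedness statements: Corollary \ref{corollary:TubesDirecting} is phrased for objects of $\AA$ (not $\Db\AA$), so one must be careful that $X$ and the summands of $E$ genuinely live in a common heart — this is exactly why the lemma is stated inside $\HH$ and why we may invoke the $\t$-invariant $t$-structure from Proposition \ref{proposition:TauInvariant} and Theorem \ref{theorem:TubeMain}, after which $\HH$ is hereditary with Serre duality and the corollary applies verbatim. A secondary subtlety is checking that $B$ really is the full cokernel with composition factors exactly $\t^{-i}S,\dots,\t^{-i-s+1}S$ each occurring once (rather than with multiplicities), which follows from Remark \ref{Remark:LS} identifying $\Ext(E,t^iL)\otimes_A E\cong\t^{-i}S$ and iterating the triangles \eqref{equation:TriangleDefiningL} over a block of $s$ consecutive indices, using $\t^{-s}S\cong t^sS$-type periodicity of the tube.
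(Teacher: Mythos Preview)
Your approach is essentially the same as the paper's for the second assertion: both use Corollary~\ref{corollary:TubesDirecting} (ultimately convexity of tubes) to reduce $\dim\Hom(X,B)$ and $\dim\Hom(X,E)$ to the same sum $\sum_{j}\dim\Hom(X,\t^{-j}S)$ over the composition factors. One small overreach: you claim $\Ext(X',Z)=0$ for \emph{all} $Z\in\TT_E$, but this need not hold when $\Hom(X',\TT_E)=0$ (e.g.\ when the tube maps nonzero to $\t X'$). The paper phrases it more carefully as ``either $\Ext(X,Y)=0$ or $\Hom(X,Y)=0$'', and in the case $\Hom(X',\TT_E)=0$ the desired equality is simply $0=0$.

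There is, however, a genuine gap in your indecomposability argument. You assert that $B$ has simple socle ``coming from the short exact sequence'', but the iterated filtration only produces a simple \emph{subobject} $\t^{-is-1}S\subseteq B$; it does not by itself show the socle is simple. The paper instead applies $\Hom(-,\t^{-i}S)$ to $0\to L_{-1}\to L_0\to B\to 0$: since $\Hom(B,\t^{-i}S)$ injects into $\Hom(L_0,\t^{-i}S)$, and the latter vanishes for $i\not\equiv 0$ by the defining properties of $L$ in Proposition~\ref{proposition:LS}, one gets $\Hom(B,\t^{-i}S)=0$ for $i\neq 0$, so $B$ has simple \emph{top} and is therefore indecomposable in $\nilp\tilde{A}_{s-1}$. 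Your socle claim is in fact true and can be proved by the dual computation (apply $\Hom(\t^{-k}S,-)$ and use $\Hom(E,L_i)=0$ from Lemma~\ref{lemma:EE2}(3) together with Serre duality), but you have not carried it out; ``uniserial of length $s$ with prescribed composition factors'' presupposes exactly the indecomposability you are trying to establish.
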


\begin{proof}
As in Proposition \ref{proposition:SphericalInTube}, the additive closure of the objects in $\TT_E$ is equivalent to the category of finite dimensional nilpotent representations of a cyclic $\tilde{A}_n$-quiver.  Since $B$ has a composition series by elements of $(\t^{-i} S)_{0 \leq i < s}$, we know that $B$ lies in this subcategory.  Moreover, as noted above, every such object occurs exactly once in the composition series of $B$.  Applying the functor $\Hom(-,\t^{-i} S)$ to the short exact sequence
$$0 \to L_{-1} \to L_{0} \to B \to 0$$
shows that $\Hom(B,\t^{-i} S) = 0$ when $i \not= 0$.  This shows that $B$ is indecomposable.

Let $X$ be an indecomposable object in $\HH$ which does not lie in $\TT_E$.  Corollary \ref{corollary:TubesDirecting} shows that either $\Ext(X,Y) = 0$ or $\Hom(X,Y) = 0$, for all $Y$ in $\TT_E$.  We thus find that $\dim \Hom(X,B) = \sum_{i=1}^s \dim \Hom (X, \t^{-i} S) = \dim \Hom(X,E) = \dim \Hom(X,E)$.
\end{proof}

\begin{lemma}\label{lemma:EE2} Let $\EE = (L_i)_{i\in \bZ}$ and $E$ as above, then
\begin{enumerate}
\item $\Hom(L_i,L_j) = 0$ for $i>j$,
\item $\Ext(L_j,L_i) = 0$ for all $i+1 \geq j$,
\item $\Hom(E,L_i) = \Ext(L_i,E) = 0$ for all $i \in \bZ$.
\end{enumerate}
\end{lemma}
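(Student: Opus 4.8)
The plan is to exploit the recursive structure of the short exact sequences
$0 \to L_i \to L_{i+1} \to B_i \to 0$ (where each $B_i \cong t^i B$ lies in the tube $\TT_E$) together with the properties of $L$ and $S$ from Proposition \ref{proposition:LS} and the directedness statement Corollary \ref{corollary:TubesDirecting}. Recall $L_i = t^{is}L$ and that $t = T_E$ is an autoequivalence commuting with the whole setup; in particular it suffices in many places to treat one index and transport by a power of $t$.

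For (3): since every direct summand of $E$ lies in the tube $\TT_E$, and each $L_i$ is a $t$-translate of the peripheral object $L$ which lies in a \emph{different} tube, $L_i$ has no summand in $\TT_E$. Corollary \ref{corollary:TubesDirecting} then forces, for each indecomposable summand $E_j$ of $E$, that $\Hom(E_j, L_i) = 0$ or $\Ext(E_j, L_i) = 0$. Now $\Ext(L_i, E) \cong \Hom(E, \t L_i)^* = \Hom(E, \t^{1-i}S$-shift of $L)^*$ by Serre duality, so the two vanishings I must prove, $\Hom(E,L_i)=0$ and $\Ext(L_i,E)=0$, are Serre-dual to each other; it is enough to prove $\Hom(E, L_i) = 0$ for all $i$. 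For $i = 0$: $\Hom(E, L) = \oplus_{a=1}^s \Hom(\t^{-a}S, L)$, and by Serre duality $\Hom(\t^{-a}S, L) \cong \Ext(L, \t^{-a-1}S)^*$. Since $L$ lies in a tube different from $\TT_E$ and $\Hom(L, S) \neq 0$, Corollary \ref{corollary:TubesDirecting} gives $\Ext(L, \t^{-a-1}S) = 0$ for every $a$, hence $\Hom(E,L) = 0$; applying $t^{is}$ (using $tE \cong E$) gives $\Hom(E, L_i) = 0$ for all $i$.

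For (1): I argue by induction using the sequences $0 \to L_i \to L_{i+1} \to B_i \to 0$. Fix $i > j$; applying $\Hom(-, L_j)$ to $0 \to L_{i-1} \to L_i \to B_{i-1} \to 0$ gives $\Hom(B_{i-1}, L_j) \to \Hom(L_i, L_j) \to \Hom(L_{i-1}, L_j)$. The rightmost term vanishes by the induction hypothesis (on $i - j$), and $\Hom(B_{i-1}, L_j) = 0$ because $B_{i-1} \in \TT_E$ maps to $L_j$ which has no summand in $\TT_E$, so by Corollary \ref{corollary:TubesDirecting} either this Hom or the corresponding Ext vanishes — and to pin down that it is the Hom, I use that dually $\Ext(B_{i-1}, L_j) \cong \Hom(L_j, \t B_{i-1})^*$ with $\t B_{i-1} \in \TT_E$, and a nonzero such map would contradict part (3) applied after identifying $\Hom(L_j, \TT_E)$-spaces; more directly, Lemma \ref{lemma:B} identifies $\dim\Hom(L_j, B_{i-1})$-type spaces with $\Hom$-into-$E$ spaces, which vanish by (3) and Serre duality. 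The base case $i = j+1$: $\Hom(L_{j+1}, L_j)$; here I peel $L_{j+1}$ via its sequence and use $\Hom(L_j, L_j) = k$ together with the explicit relation in $\HH$ to see no new map appears — essentially the analogue of "$1_L$ does not factor through $B$" used in Lemma \ref{lemma:TubeCriterium}.

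For (2): again induct, now applying $\Hom(L_j, -)$ to $0 \to L_i \to L_{i+1} \to B_i \to 0$ to get $\Ext(L_j, L_i) \to \Ext(L_j, L_{i+1}) \to \Ext(L_j, B_i)$. Since $\HH$ is hereditary, $\Ext(L_j, B_i)$ receives from $\Hom(L_j, B_i) \to$ nothing relevant; the point is $\Ext(L_j, B_i) = 0$ because $B_i \in \TT_E$, $L_j \notin \TT_E$, and by Corollary \ref{corollary:TubesDirecting} combined with Serre duality $\Ext(L_j, B_i) \cong \Hom(B_i, \t L_j)^* = 0$ (as $\t L_j$ is again a $t$-translate of $L$, outside $\TT_E$, and any map from $\TT_E$ into it would make $\Ext(\TT_E, -)$ nonzero, contradicting part (1)'s style argument / Corollary \ref{corollary:TubesDirecting}). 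So the sequence propagates $\Ext(L_j, L_i) = 0 \Rightarrow \Ext(L_j, L_{i+1}) = 0$, and the base case is $\Ext(L_j, L_{j-1}) = 0$, which follows from (1) by Serre duality: $\Ext(L_j, L_{j-1}) \cong \Hom(L_{j-1}, \t L_j)^*$, and $\t L_j$ is a peripheral object whose position relative to $L_{j-1}$ in the sequence $\EE$ I must check sits in the "$\Hom$ vanishes" range of (1) — this uses that $\t$ acts on the $L_i$ by a shift-like rule dictated by $tS \cong \t^{-1}S$.

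\textbf{Main obstacle.} The genuinely delicate point is \emph{orienting} the Corollary \ref{corollary:TubesDirecting} dichotomy in each case — i.e. knowing it is the $\Hom$ (resp. $\Ext$) term, and not the other one, that vanishes. The clean way around this is to establish (3) first and then bootstrap: (3) plus Serre duality plus Lemma \ref{lemma:B} pins down all the mixed $\Hom/\Ext$ spaces between the $L_i$ and objects of $\TT_E$, and feeding that into the long exact sequences coming from $0 \to L_i \to L_{i+1} \to B_i \to 0$ forces (1) and (2) without ever having to guess which side of the dichotomy holds. So the real work is arranging the induction so that (3) is available before (1) and (2), and carefully tracking how $\t$ permutes the peripheral objects $\{t^{is}L\}$ (governed by $tS \cong \t^{-1}S$) so that the base cases land in the asserted index ranges.
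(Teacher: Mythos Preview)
Your inductive strategy can be made to work, but it is considerably more laborious than what the paper does, and in one place your argument as written has a genuine gap.

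\textbf{Comparison with the paper.} The paper's proof is essentially a one-liner: parts (1), (3), and all of (2) except the boundary case $j=i+1$ follow \emph{directly} from the convexity statement in Theorem~\ref{theorem:TubeMain}(2). For instance, for (3): a nonzero map $E\to L_i$ together with the epimorphism $L_{i+1}\to B_i\in\TT_E$ would produce a path from $\TT_E$ through $L_i\notin\TT_E$ back into $\TT_E$, violating convexity. For (1) with $i>j$: a nonzero $L_i\to L_j$ composed with the monomorphism $L_j\hookrightarrow L_i$ gives a nonzero, hence invertible, endomorphism of $L_i$ (using $\End(L_i)\cong k$ from standardness), forcing the mono to split and contradicting indecomposability of $L_i$. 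No induction is needed. Your route via long exact sequences and Corollary~\ref{corollary:TubesDirecting} reaches the same conclusions but with much more bookkeeping; note also that Corollary~\ref{corollary:TubesDirecting} only says $\Hom(X,Y)\neq 0\Rightarrow\Ext(X,Y)=0$ for the \emph{same} pair, so your claim that it yields $\Ext(L,\t^{b}S)=0$ for \emph{all} $b$ from $\Hom(L,S)\neq 0$ is not a direct citation --- you are implicitly using convexity there anyway.

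\textbf{The gap.} Your base case for (2), namely $\Ext(L_j,L_{j-1})=0$, does not follow from (1) via Serre duality as you suggest. You write $\Ext(L_j,L_{j-1})\cong\Hom(L_{j-1},\t L_j)^*$ and hope that $\t L_j$ ``sits in the $\Hom$-vanishes range of (1)''. But $\t L_j$ is \emph{not} one of the $L_i$: the autoequivalences $\t$ and $t$ are unrelated on the $L_i$ (the relation $tS\cong\t^{-1}S$ governs $\TT_E$, not the tubes containing the $L_i$). So (1) says nothing about $\Hom(L_{j-1},\t L_j)$. The paper handles exactly this case --- and \emph{only} this case --- by the dimension count you allude to in your final paragraph: applying $\Hom(L_i,-)$ to $0\to L_{i-1}\to L_i\to B\to 0$ and using $\dim\Hom(L_i,L_i)=1=\dim\Hom(L_i,B)$ (the latter from Lemma~\ref{lemma:B}) together with $\Hom(L_i,L_{i-1})=0$ and $\Ext(L_i,L_i)=0$ forces $\Ext(L_i,L_{i-1})=0$. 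Your instinct in the ``main obstacle'' paragraph is correct; you should simply replace the Serre-duality attempt by this dimension count.
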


\begin{proof}
The only equality that does not follow directly from Theorem \ref{theorem:TubeMain} is $\Ext(L_j,L_i) = 0$ for $i+1 = j$.  To proof this last equality, apply $\Hom(L_{i},-)$ to short exact sequence
$$0 \to L_{i-1} \to L_{i} \to B \to 0.$$
We find, by using that $\dim \Hom(L_i,L_i) = 1 = \dim \Hom(L_i,B)$ (see Lemma \ref{lemma:B}), that indeed $\Ext(L_{i},L_{i-1}) = 0$ as required.
\end{proof}

We will also use following lemma.

\begin{lemma}\label{lemma:LiX}
If $\Hom(L_i,X) \not= 0$, then $\Hom(L_j,X) \not= 0$ for all $j \leq i$.
\end{lemma}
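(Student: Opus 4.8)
The plan is to prove the statement by downward induction on $i$: it clearly suffices to show that $\Hom(L_i,X)\neq 0$ forces $\Hom(L_{i-1},X)\neq 0$, and then iterate. The only input I would use is the family of short exact sequences constructed in \S\ref{subsubsection:tFractional}, namely $0\to L_{i-1}\to L_i\to B\to 0$ in $\HH$ for every $i\in\bZ$, where $B$ is the indecomposable object of $\TT_E$ described in Lemma \ref{lemma:B} (and is the \emph{same} object $B$ for all indices $i$).

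Assume $\Hom(L_i,X)\neq 0$ and apply the contravariant left-exact functor $\Hom(-,X)$ to $0\to L_{i-1}\to L_i\to B\to 0$, obtaining an exact sequence $0\to\Hom(B,X)\to\Hom(L_i,X)\to\Hom(L_{i-1},X)$. If the right-hand map is nonzero, then $\Hom(L_{i-1},X)\neq 0$ and we are done. Otherwise this map is zero, so $\Hom(B,X)\hookrightarrow\Hom(L_i,X)$ is an isomorphism and in particular $\Hom(B,X)\neq 0$. Applying $\Hom(-,X)$ to the short exact sequence $0\to L_{i-2}\to L_{i-1}\to B\to 0$ then yields an injection $\Hom(B,X)\hookrightarrow\Hom(L_{i-1},X)$, so again $\Hom(L_{i-1},X)\neq 0$. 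This completes the inductive step, and hence the proof.

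I do not expect a genuine obstacle here: the argument is a short diagram chase using only left-exactness of $\Hom(-,X)$. The one point worth stating explicitly is that the cokernel $L_i/L_{i-1}$ is independent of $i$ (equal to the object $B$ of Lemma \ref{lemma:B}), which is precisely how the sequences $0\to L_i\to L_{i+1}\to B\to 0$ were set up. An equivalent, slightly more hands-on phrasing of the step: a nonzero map $f\colon L_i\to X$ either restricts to a nonzero map along $L_{i-1}\hookrightarrow L_i$, or it vanishes on $L_{i-1}=\ker(L_i\to B)$ and so factors through a nonzero map $\bar f\colon B\to X$, which we precompose with the surjection $L_{i-1}\to B$ to obtain the desired nonzero map $L_{i-1}\to X$.
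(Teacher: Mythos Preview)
Your proof is correct and is the natural argument; the paper does not write its own proof here but refers to \cite[Lemma 4.2]{vanRoosmalen08}, where the identical diagram chase is carried out in the 1-Calabi-Yau setting (with $E$ in place of $B$). The only point to make explicit, which you correctly do, is that the cokernel $L_i/L_{i-1}$ is the \emph{same} object $B$ for every $i$; this follows from the construction in \S\ref{subsubsection:tFractional} (applying $t^s$ to one such sequence produces the next, and $t^s B\cong B$ since $B$ is the unique indecomposable of length $s$ in $\TT_E$ with top $S$, and $t^s$ preserves this description).
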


\begin{proof}
As in the proof of \cite[Lemma 4.2]{vanRoosmalen08}.
\end{proof}

\begin{proposition}\label{proposition:AlmostAmple}
The sequence $\EE$ is coherent in $\HH$, and for any $X \in \ind \HH$, there is an $n$ such that $\Hom(\t^n L_i,X) \not= 0$, for $i \ll 0$.
\end{proposition}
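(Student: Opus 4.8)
The statement packages three assertions: that $\EE$ is a projective sequence, that it is moreover coherent, and that for every $X\in\ind\HH$ there is an $n$ with $\Hom(\t^{n}L_{i},X)\neq0$ for $i\ll0$. Since $\HH$ is hereditary, an epimorphism $X\twoheadrightarrow Y$ sits in a short exact sequence $0\to Z\to X\to Y\to0$, and $\Hom(L_{i},X)\to\Hom(L_{i},Y)$ is onto as soon as $\Ext(L_{i},Z)=0$; so projectivity of $\EE$ comes down to the claim that, for each $Z\in\ind\HH$, one has $\Ext(L_{i},Z)=0$ for $i\ll0$. I would prove this through Serre duality, $\Ext(L_{i},Z)\cong\Hom(Z,\t L_{i})^{*}$, using the observation that $\t L$ (paired with the peripheral object $\t S$) again satisfies the conclusions of Proposition~\ref{proposition:LS}, with the \emph{same} object $E$ and the \emph{same} twist $t=T_{E}$; hence $(\t L_{i})_{i}=(t^{is}\t L)_{i}$ is another sequence of the kind built in \S\ref{subsubsection:tFractional}, so Lemmas~\ref{lemma:B} and~\ref{lemma:EE2} apply to it with $\t B$ in place of $B$. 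In particular the monomorphisms $\t L_{i}\hookrightarrow\t L_{i+1}$ with cokernel $\t B\in\TT_{E}$ give inclusions $\Hom(Z,\t L_{i})\hookrightarrow\Hom(Z,\t L_{i+1})$, so $\dim\Ext(L_{i},Z)$ is non-increasing as $i\to-\infty$ and stabilizes at some $c\ge0$; it remains to exclude $c>0$. If $c>0$, then for $i\ll0$ every morphism $Z\to\t L_{i}$ factors through each subobject $\t L_{j}\subseteq\t L_{i}$ ($j\le i$), whose cokernel lies in $\add\TT_{E}$ with $\TT_{E}$-length $(i-j)s\to\infty$; this produces a nonzero $W=\im(Z\to\t L_{i})$ for which $\t L_{i}/W$ admits arbitrarily long quotients inside $\TT_{E}$, which I would contradict using Corollary~\ref{corollary:TubesDirecting} and Lemma~\ref{lemma:B} (controlling $\dim\Hom(-,\t B)$ against objects outside $\TT_{E}$) together with the bound $\chi(B,Z)\le0$ forced by $\chi(L_{i},Z)=\chi(L,Z)+i\,\chi(B,Z)$ and $\dim\Hom(L_{i},Z)\ge0$. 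I expect this Ext-vanishing to be the main obstacle.

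For the ample-flavoured clause, Lemma~\ref{lemma:LiX} applied to the sequence attached to $\t^{n}L$ shows it suffices to produce, for each $X\in\ind\HH$, a single nonzero morphism $\t^{n}L_{i}\to X$. If $\Hom(L_{i},X)\neq0$ for some $i$ we are done with $n=0$. If $X$ lies in $\TT_{E}$ or in some $\TT_{L_{i}}$, then since $\add\TT_{E}\cong\nilp\tilde A_{s}$ and $\add\TT_{L_{i}}\cong\nilp\tilde A_{l}$ (Proposition~\ref{proposition:SphericalInTube}) are length categories, $X$ receives a nonzero morphism from some peripheral object of that tube; each such peripheral object is itself a suitable $\t^{n}L_{i}$ (in the $\TT_{L_{i}}$ case) or receives a nonzero map from one by the choices of Proposition~\ref{proposition:LS} (in the $\TT_{E}$ case, where the peripherals are moreover simple in $\HH$, so composition with them stays nonzero), settling these cases. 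For the remaining $X$, lying in a tube distinct from $\TT_{E}$ and all $\TT_{L_{i}}$, I would show $\chi(\t^{n}L_{j},X)>0$ for suitable $n$ and $j\ll0$; here Corollary~\ref{corollary:TubesDirecting} gives $\Hom(\t^{n}L_{j},X)\neq0\iff\chi(\t^{n}L_{j},X)>0$, and $\chi(\t^{n}L_{j},X)$ is computed from the short exact sequences $0\to\t^{n}L_{i}\to\t^{n}L_{i+1}\to\t^{n}B\to0$ together with Lemma~\ref{lemma:B}. Finally, a pigeonhole over the finitely many peripherals of $\TT_{L}$ makes the value of $n$ uniform for all $i\ll0$.

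Granting projectivity, I would deduce coherence from the finiteness in the short exact sequences $0\to L_{i}\to L_{i+1}\to B\to0$: applying $\Hom(-,X)$ shows that for $i\ll0$ every morphism $L_{i}\to X$ is, modulo the contribution of $\Hom(B,X)$ --- which Lemma~\ref{lemma:B} identifies with $\Hom(E,X)$ and hence makes independent of $i$ --- a sum of composites $L_{i}\to L_{i+1}\to\cdots\to L_{i_{r}}\to X$ with the $i_{r}$ taken from a fixed finite set of indices $\le n$; this is precisely the surjectivity of $\bigoplus_{r}\Hom(L_{i_{r}},X)\otimes\Hom(L_{i},L_{i_{r}})\to\Hom(L_{i},X)$ demanded in the definition of a coherent sequence (cf.\ \cite{Polishchuk05}), so $\EE$ is coherent in $\HH$.
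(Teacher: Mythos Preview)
Your organisation differs from the paper's. For projectivity and coherence the paper does not argue directly: it simply remarks that the proof of \cite[Proposition~4.3]{vanRoosmalen08} carries over verbatim to $\EE$ (and to each $\t^n\EE$). Your explicit sketches are plausible in outline, but there is a slip of direction in the coherence paragraph --- applying $\Hom(-,X)$ to $0\to L_i\to L_{i+1}\to B\to0$, the obstruction to lifting a map $L_i\to X$ along $L_i\hookrightarrow L_{i+1}$ lies in $\Ext(B,X)$, not $\Hom(B,X)$, and Lemma~\ref{lemma:B} controls $\Hom(-,B)$ rather than $\Hom(B,-)$, so the identification you invoke is the wrong one.

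For the last clause the approaches genuinely diverge. The paper uses all the sequences $\t^n\EE$ ($0\le n<r$, where $\t^r\cong\id$) simultaneously: knowing each is projective and coherent, it forms the combined evaluation
\[
\theta\colon\bigoplus_{n=1}^{r}\bigoplus_{j=1}^{m_n}\Hom(\t^n L_{i^{(n)}_j},X)\otimes\t^n L_{i^{(n)}_j}\longrightarrow X
\]
and argues that $\coker\theta=0$ via Serre duality --- a nonzero cokernel $C$ gives $\Ext(C,M)\neq0$, hence some $\t^n L_l$ maps nonzero to $C$, and then projectivity together with coherence force that map to factor through $\theta$. Your case-by-case route via the Euler form $\chi(\t^n L_j,X)=\chi(\t^n L_0,X)+j\,\chi(B,X)$ is more concrete and handles $\chi(B,X)\neq0$ cleanly, but it leaves a gap when $\chi(B,X)=0$ (equivalently $\dim\Hom(E,X)=\dim\Hom(X,E)$): then $\chi(\t^n L_j,X)$ is constant in $j$, and you still owe an argument that it is positive for some $n$. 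The paper's uniform cokernel argument never needs to isolate this boundary case.
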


\begin{proof}
The proof that $\EE$ is projective and coherent is analogous to the proof of \cite[Proposition 4.3]{vanRoosmalen08}.

To prove the last claim, we note that for all $0 \leq n < r$ the sequences $\t^n \EE = (\t^n L_i)_{i \in \bZ}$ are projective and coherent.  For any $X \in \ind \HH$, let $i^{(n)}_{1}, \ldots, i^{(n)}_{m_n}$ be as in the definition of coherence.  We will show that, if $\Hom(\t^n L_i,X) \not= 0$ for $i \ll 0$ and some $n$, the canonical map
\begin{equation}
\theta : \bigoplus_{n = 1} ^ {r} \bigoplus_{j=1}^{m_n} \Hom(\t^n L_{i^{(n)}_{j}},X) \otimes \t^n L_{i^{(n)}_{j}} \longrightarrow X,
\end{equation}
is an epimorphism.

To ease notation, let $C = \coker \theta$ and write $M = \bigoplus_{n = 1} ^ {r} \bigoplus_{j=1}^{m_n} \Hom(\t^n L_{i^{(n)}_{j}},X) \otimes \t^n L_{i^{(n)}_{j}}$.

There is a nonsplit exact sequence $0 \to \im \theta \to X \to C \to 0$ and thus, since $\im \theta$ is a quotient object of $M$, we have $\Ext(C,M) \not= 0$.  We find $\Hom(M,\t C) \not= 0$.

In particular, there are $n,k \in \bZ$ such that $\Hom(\t^n L_{i^{(n)}_k},C) \not= 0$.  Using the projectivity of $\t^n \EE$ and Lemma \ref{lemma:LiX} we know there is an $l \ll 0$ such that $\Hom(\t^n L_l,C) \not= 0$ and every map in $\Hom(\t^n L_l,C)$ lifts to a map in $\Hom(\t^n L_l,X)$.  Again using coherence, such a map should factor through $M$.  We may conclude $C=0$, and hence $\theta$ is an epimorphism.

\end{proof}

In general, the sequence $\EE$ will not be ample.  We will write $\HH_0$ for the full subcategory of $\HH$ spanned by all objects $X$ such that $\Hom(L_i,X)=0$ for $i \ll 0$.  It follows from Lemma \ref{lemma:LiX} that $\Hom(L_i,X)=0$ for all $i \in \bZ$.  Recall that, since $\EE$ is projective, $\HH_0$ is a Serre subcategory.

\subsubsection{Description of $\HH / \HH_0$}

We will now prove that the category $\HH / \HH_0$ is equivalent to the category $\coh \bP^1$ of coherent sheaves on $\bP^1$.  To do this we will show that $R = R(\EE) \cong k[x,y]$, and then apply Corollary \ref{corollary:Polishchuk}. 

\begin{lemma}\label{lemma:HomDimensions}
Let $\EE= (L_i)_{i \in I}$ and $E = \oplus_{i=1}^s \t^i S$ be as before.  If $j \geq i$, then
$$\dim \Hom(L_{i},L_{j}) = 1 + (j-i).$$
\end{lemma}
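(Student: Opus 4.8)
The plan is to compute $\dim \Hom(L_i, L_j)$ by induction on $j - i \geq 0$, using the short exact sequences $0 \to L_i \to L_{i+1} \to B_i \to 0$ (where $B_i$ is the indecomposable object of $\TT_E$ appearing in \S\ref{subsubsection:tFractional}, all isomorphic up to $\t$-twist, with the properties established in Lemma \ref{lemma:B}). The base case $j = i$ is $\dim \End(L_i) = 1$, which holds because $L_i = t^{is}L$ is a peripheral, hence exceptional, object (a twist by the autoequivalence $t$ of the exceptional object $L$ of Proposition \ref{proposition:LS}). For the inductive step I would fix $i$ and apply $\Hom(L_i, -)$ to the sequence $0 \to L_{j} \to L_{j+1} \to B_j \to 0$, giving the long exact sequence
\[
0 \to \Hom(L_i, L_j) \to \Hom(L_i, L_{j+1}) \to \Hom(L_i, B_j) \to \Ext(L_i, L_j) \to \cdots
\]
By Lemma \ref{lemma:EE2}(2), $\Ext(L_i, L_j) = 0$ since $j \geq i$ means $i + 1 \geq j$ fails only when $j > i+1$ — wait, I must be careful: Lemma \ref{lemma:EE2}(2) only gives vanishing for $i + 1 \geq j$, so for $j$ large this is not directly available and the argument needs the reverse direction.

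So instead I would apply $\Hom(-, L_j)$ is not quite right either; the cleanest route is to apply $\Hom(L_i, -)$ to $0 \to L_{i} \to L_{i+1} \to B_i \to 0$ only after first reducing to the case where the left endpoint is the one being varied. Concretely, I would run the induction by applying $\Hom(-, L_j)$ to the sequence $0 \to L_{i-1} \to L_i \to B_{i-1} \to 0$, yielding
\[
0 \to \Hom(B_{i-1}, L_j) \to \Hom(L_i, L_j) \to \Hom(L_{i-1}, L_j) \to \Ext(B_{i-1}, L_j).
\]
Here $\Ext(B_{i-1}, L_j) = 0$ by Corollary \ref{corollary:TubesDirecting} (since $B_{i-1} \in \TT_E$, $L_j$ lies in a different tube, and there is a nonzero map — more precisely one must check a map exists, or invoke that $\Ext$ between them must vanish because one of them is in a tube and they are in different tubes, using the corollary's hypothesis carefully). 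Also $\Hom(B_{i-1}, L_j) = 0$: by Lemma \ref{lemma:B}-type reasoning, $\Hom(B_{i-1}, -)$ restricted to objects of $\TT_E$ picks out one copy of the relevant simple, but $L_j \notin \TT_E$, and by Corollary \ref{corollary:TubesDirecting} together with the fact that $\Hom(L_j, B_{i-1}) \neq 0$ (maps from $L$'s tube into $\TT_E$ exist), we get $\Hom(B_{i-1}, L_j) = 0$. This gives $\dim \Hom(L_i, L_j) = \dim \Hom(L_{i-1}, L_j)$, and iterating down to $\dim \Hom(L_{j-(j-i)}, L_j)$ does not obviously help — the induction must go the other way.

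Let me restructure: the right induction is on $j$ with $i$ fixed, using $0 \to L_j \to L_{j+1} \to B_j \to 0$ and $\Hom(L_i, -)$. I need $\Ext(L_i, L_j) = 0$; this holds by Corollary \ref{corollary:TubesDirecting} because $\Hom(L_i, L_j) \neq 0$ (nonzero by the inductive hypothesis, and for the base case because $t$ is built from $T_E$ so consecutive $L$'s map nonzero, extended via Lemma \ref{lemma:LiX}) and $L_i, L_j$ lie in different tubes whenever $i \neq j$ — indeed $t$ moves $L$ to a different tube each step, and more generally $L_i, L_j$ are in distinct tubes for $i \neq j$, which I would note follows from the sequence structure. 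Then the long exact sequence truncates to
\[
0 \to \Hom(L_i, L_j) \to \Hom(L_i, L_{j+1}) \to \Hom(L_i, B_j) \to 0,
\]
so $\dim \Hom(L_i, L_{j+1}) = \dim \Hom(L_i, L_j) + \dim \Hom(L_i, B_j)$. Finally $\dim \Hom(L_i, B_j) = \dim \Hom(L_i, E) = 1$: the second equality is Lemma \ref{lemma:B} applied to $L_i$ (no summand of $L_i$ is in $\TT_E$), and $\dim \Hom(L_i, E) = 1$ follows because $\Hom(L_i, E) = \Hom(t^{is}L, E) \cong \Hom(L, t^{-is}E) = \Hom(L, E)$ (as $tE \cong E$), which is $\sum_{a=1}^s \dim\Hom(L, \t^{-a}S)$, and Proposition \ref{proposition:LS}(2)–(3) make exactly one of these summands nonzero of dimension one. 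Combining, $\dim\Hom(L_i, L_{j+1}) = \dim\Hom(L_i,L_j) + 1$, which with the base case $\dim\End(L_i) = 1$ gives $\dim\Hom(L_i, L_j) = 1 + (j-i)$ by induction. The main obstacle is bookkeeping the tube-separation and nonvanishing hypotheses needed to legitimately invoke Corollary \ref{corollary:TubesDirecting} for the $\Ext$-vanishing at each step, and confirming $\Hom(L_i, B_j)$ does not degenerate (the case $\dim\Hom(M,L)=2$ in Proposition \ref{proposition:LS}, giving $s$ possibly $> 1$, is where one must be most careful about which $\t$-shift of $S$ receives the nonzero map).
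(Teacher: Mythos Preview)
Your final approach---induct on $j-i$, apply $\Hom(L_i,-)$ to $0 \to L_j \to L_{j+1} \to B \to 0$, and use Lemma~\ref{lemma:B} to get $\dim\Hom(L_i,B)=1$---is exactly the paper's proof. The only difference is how you obtain $\Ext(L_i,L_j)=0$, and there you have misread Lemma~\ref{lemma:EE2}(2). That lemma states $\Ext(L_j,L_i)=0$ whenever $i+1\geq j$; renaming the variables, this says $\Ext(L_a,L_b)=0$ whenever $a\leq b+1$. Applied with $a=i$ and $b=j$, it gives $\Ext(L_i,L_j)=0$ for all $j\geq i-1$, in particular for all $j\geq i$. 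So the vanishing you need is already available, and the paper's proof just cites Lemma~\ref{lemma:EE2} and Lemma~\ref{lemma:B} and is done in two lines. Your initial instinct to use Lemma~\ref{lemma:EE2}(2) was correct; you abandoned it because you read the indices backwards.

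Your detour through Corollary~\ref{corollary:TubesDirecting} is not wrong, but it obliges you to show that $L_i$ and $L_j$ lie in different tubes for $i\neq j$, which you acknowledge as an obstacle and do not fully justify. It can be done: if $L_j$ and $L_i$ (with $j<i$) were in the same tube, convexity (Theorem~\ref{theorem:TubeMain}(2)) applied to the chain of monomorphisms $L_j\hookrightarrow L_{j+1}\hookrightarrow\cdots\hookrightarrow L_i$ would force $L_j$ and $L_{j+1}$ into the same tube; both being peripheral with $\Hom(L_j,L_{j+1})\neq 0$ would then force $L_j\cong L_{j+1}$, contradicting the nonzero cokernel $B$. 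So your argument closes, but with more work than simply citing the lemma you already had.
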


\begin{proof}
It follows from Lemma \ref{lemma:B} that $\dim \Hom(L_i,B) = 1$.  The rest follows from Lemma \ref{lemma:EE2} and the short exact sequence
$$0 \longrightarrow L_i \longrightarrow L_{i+1} \longrightarrow B \longrightarrow 0.$$
\end{proof}

\begin{proposition}\label{proposition:AmpleRingByProjectiveLine}
There is a fully faithful and exact functor $F:\Db \coh \bP^1 \to \Db \AA$ such that $F(\OO_{\bP^1}(n)) \cong L_n$ for all $n \in \bZ$.
\end{proposition}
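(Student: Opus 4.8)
The plan is to identify the subalgebra of $\Db\AA$ generated by the sequence $\EE = (L_i)_{i\in\bZ}$ with the homogeneous coordinate ring of $\bP^1$, and then quote Beilinson's equivalence. Recall $L_i = t^{is}L$ and set $R = R(\EE) = \bigoplus_{n\geq 0} R_n$ with $R_n = \Hom(L, t^{ns}L) = \Hom(L_0, L_n)$. By Lemma \ref{lemma:HomDimensions} we have $\dim_k R_n = n+1$ for all $n \geq 0$, which is exactly the Hilbert function of $k[x,y]$ with the standard grading. The first task is therefore to show $R \cong k[x,y]$ as graded $k$-algebras. Since $R_0 = k$ and $\dim R_1 = 2$, choose a basis $x, y$ of $R_1$; I would show the two composites $x\cdot(-), y\cdot(-): R_1 \to R_2$ span $R_2$ (dimension $3$), so $x^2, xy, yx, y^2$ span $R_2$ and there is one linear relation among them. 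The key point is that this relation must be the commutator $xy - yx$ (up to scalar), which forces $R$ to be commutative; this should follow from analyzing the short exact sequences $0 \to L_i \to L_{i+1} \to B \to 0$ together with the indecomposability of $B$ (Lemma \ref{lemma:B}) and the fact, via Lemma \ref{lemma:B}, that $\dim\Hom(L_i, B) = 1$ — so multiplication by any nonzero element of $R_1$ is, up to scalar on the $B$-quotient, essentially the canonical inclusion, and the two inclusions $L_i \hookrightarrow L_{i+1}$ coming from $x$ and $y$ do not interfere in a way that produces noncommutativity. Once $R$ is commutative with Hilbert series $\sum (n+1)t^n$ and generated in degree $1$ by two elements, a standard argument (the kernel of $k[x,y] \to R$ is a graded ideal which is zero in each degree by the dimension count) gives $R \cong k[x,y]$.

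Next I would verify that $\EE$ is a coherent sequence in $\HH$ (done in Proposition \ref{proposition:AlmostAmple}) and that $R$ is noetherian (immediate, being $k[x,y]$), so Corollary \ref{corollary:Polishchuk} applies and yields $\HH/\HH_0 \cong \qgr k[x,y] = \qgr R$. By Serre's theorem $\qgr k[x,y] \cong \coh\bP^1$, with $\OO_{\bP^1}(n)$ corresponding to the image of the free module $R(n)$, i.e.\ to the image of $L_n$. This produces an exact equivalence $\coh\bP^1 \cong \HH/\HH_0$ sending $\OO_{\bP^1}(n)$ to the image of $L_n$, and hence an exact functor $G: \coh\bP^1 \to \HH$ after composing with a section — but a quotient functor is not faithful, so this is not yet enough.

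To upgrade to a fully faithful functor on the level of derived categories, I would instead use the Beilinson description directly: the objects $L_0 \oplus L_1$ in $\Db\AA$ satisfy $\End(L_0 \oplus L_1) \cong$ the Kronecker algebra (here $\End(L_i) \cong k$ since $L$ is exceptional, $\Hom(L_0, L_1)$ is $2$-dimensional by Lemma \ref{lemma:HomDimensions}, $\Hom(L_1, L_0) = 0$ by Lemma \ref{lemma:EE2}(1), and $\Ext^1$ between them vanishes by Lemma \ref{lemma:EE2}(2) and Theorem \ref{theorem:TubeMain}), so $L_0 \oplus L_1$ is a partial tilting complex. This gives a fully faithful exact functor $\Db\mod A \to \Db\AA$ with $A$ the Kronecker algebra, sending the indecomposable projectives to $L_0$ and $L_1$; composing with the inverse of Beilinson's equivalence $\Db\coh\bP^1 \cong \Db\mod A$ (which is recalled in the excerpt and sends $\OO_{\bP^1} \oplus \OO_{\bP^1}(1)$ to $A$) yields a fully faithful exact $F: \Db\coh\bP^1 \to \Db\AA$ with $F(\OO_{\bP^1}) \cong L_0$ and $F(\OO_{\bP^1}(1)) \cong L_1$. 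It remains to check $F(\OO_{\bP^1}(n)) \cong L_n$ for all $n \in \bZ$. For this I would argue that $F(\OO_{\bP^1}(n))$ is a line bundle — an exceptional object whose Hom-dimensions against $L_0, L_1$ match those of $L_n$ — and then identify it with $L_n$ by induction, using the Euler sequence $0 \to \OO_{\bP^1}(n-1) \to \OO_{\bP^1}(n)^{\oplus 2} \to \OO_{\bP^1}(n+1) \to 0$ on $\bP^1$ (which $F$ carries to the analogous sequence relating $L_{n-1}, L_n, L_{n+1}$ built from the short exact sequences $0 \to L_i \to L_{i+1} \to B \to 0$) together with the fact, provable from Lemma \ref{lemma:HomDimensions} and the vanishing in Lemma \ref{lemma:EE2}, that an object with the Hom-profile of $L_n$ against the generating pair is uniquely determined.

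The main obstacle I anticipate is the commutativity of $R$: establishing that the single degree-$2$ relation is precisely $xy - yx$, rather than some twisted relation that would make $R$ a (noetherian) noncommutative curve, requires care. I would handle it by exploiting the very rigid structure of the tube $\TT_E$ — namely that the cokernels $B$ of all the inclusions $L_i \hookrightarrow L_{i+1}$ are the *same* indecomposable object of the tube, and that $\Hom(L_i, B)$ is one-dimensional — so that the "multiplication by $R_1$" maps are forced into a commutative pattern; alternatively, one can observe that the functor $t$ acts on the whole picture compatibly, and that the presence of a second line bundle $L_1$ with $\dim\Hom(L_0,L_1)=2$ already pins down $A = \End(L_0\oplus L_1)$ to be the Kronecker algebra, after which the route through partial tilting in the last paragraph bypasses the need to compute $R$ abstractly at all. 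In fact I expect the cleanest writeup to lead with the partial-tilting/Beilinson argument and use the $R\cong k[x,y]$ computation only as a consistency check.
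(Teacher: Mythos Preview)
Your partial-tilting/Beilinson argument in the third paragraph is exactly the paper's approach: verify $\Ext^1(L_0\oplus L_1, L_0\oplus L_1)=0$ via Lemma \ref{lemma:EE2}, identify $\End(L_0\oplus L_1)$ with the Kronecker algebra via Lemma \ref{lemma:HomDimensions}, and compose the resulting fully faithful functor $\Db\mod A \to \Db\AA$ with the inverse of Beilinson's equivalence. You are right that this is the clean route and that it makes the $R$-computation superfluous.

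Two remarks on where you diverge. First, the $R\cong k[x,y]$ computation is not used to prove this proposition; the paper runs the logic in the opposite direction, reading off $R(\EE)\cong k[x,y]$ as a consequence of this proposition in Corollary \ref{corollary:QuotientIsP}. So your worry about establishing commutativity of $R$ directly is moot here. Second, for the inductive identification $F(\OO_{\bP^1}(n))\cong L_n$, the paper's argument is simpler than your Euler-sequence plan: since $F$ is exact and fully faithful, the cokernel $B$ of the inclusion $L_0\hookrightarrow L_1$ is $F$ applied to the cokernel of the corresponding map $\OO\to\OO(1)$, i.e.\ $F(k(P))\cong B$ for some point $P\in\bP^1$. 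Then the triangle $\OO_{\bP^1}(n)\to\OO_{\bP^1}(n+1)\to k(P)\to\OO_{\bP^1}(n)[1]$ on $\bP^1$ is carried by $F$ to a triangle with ends $L_n$ and $B$, which one matches with the defining triangle $L_n\to L_{n+1}\to B\to L_n[1]$ (the relevant $\Ext^1$ is one-dimensional by full faithfulness). This avoids having to transport Euler sequences or argue via Hom-profiles against the generating pair.
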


\begin{proof}
It follows from Lemma \ref{lemma:EE2} that $\Ext(L_0 \oplus L_1, L_0 \oplus L_1) = 0$.  There is thus a fully faithful functor $F_1:\Db \mod A \to \Db \HH$ such that $F_1(A) \cong L_0 \oplus L_1$ and where $A = \End (L_0 \oplus L_1)$.  By Lemma \ref{lemma:HomDimensions} we know that $\dim \Hom(L_0,L_1) = 2$ so that $A \cong kQ$ where $Q$ is the Kronecker quiver.

Likewise there is an equivalence $F_2:\Db \mod A \to \Db \coh \bP^1$ such that $F_2(A) \cong \OO_{\bP} \oplus \OO_{\bP}(1)$.  Combining gives a fully faithful and exact functor $F: \Db \coh \bP^1 \to \Db \HH$ such that $F(\OO_{\bP^1}) \cong L_0$ and $F(\OO_{\bP^1}(1)) \cong L_1$.  Note that exactness shows that $B \cong k(P)$ for some $P \in \bP^1$.  Again using exactness, it follows from 
$$\OO_{\bP^1}(n) \to \OO_{\bP^1}(n+1) \to k(P) \to \OO_{\bP^1}(n)[1]$$
that $F(\OO_{\bP^1}(n)) \cong L_n$ for all $n \in \bZ$.
\end{proof}

\begin{corollary}\label{corollary:QuotientIsP}
We have $\HH / \HH_0 \cong \coh \bP^1$.
\end{corollary}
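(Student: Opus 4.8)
The plan is to deduce this directly from Corollary \ref{corollary:Polishchuk}, applied to the autoequivalence $t^s$ of $\HH$ and to the object $L = L_0 \in \HH$. Note $t^s$ makes sense on $\HH$: the functor $t$ already restricts to an autoequivalence of $\HH$ by the discussion in \S\ref{subsubsection:tFractional}, hence so does $t^s$. With this choice $(t^s)^i L = L_{i}$, so the sequence appearing in Corollary \ref{corollary:Polishchuk} is exactly $\EE = (L_i)_{i\in\bZ}$, which Proposition \ref{proposition:AlmostAmple} tells us is coherent, and the subcategory produced by that corollary is precisely the $\HH_0$ we have defined (both consist of the objects $X$ with $\Hom(L_i,X)=0$ for $i\ll 0$). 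Everything therefore reduces to checking the one remaining hypothesis of Corollary \ref{corollary:Polishchuk}: that the graded algebra $R = R(\EE) = \bigoplus_{i\geq 0}\Hom(L_0,L_i)$ is noetherian.

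To identify $R$, I would transport the computation to $\coh\bP^1$ through the fully faithful exact functor $F$ of Proposition \ref{proposition:AmpleRingByProjectiveLine}. Since $F$ is a functor it is automatically compatible with composition of morphisms, so it induces an isomorphism of $\bZ$-graded algebras $R(\EE) \cong \bigoplus_{i\geq 0}\Hom_{\coh\bP^1}(\OO_{\bP^1},\OO_{\bP^1}(i))$; that is, $R$ is identified with the section ring of the very ample line bundle $\OO_{\bP^1}(1)$, which is the polynomial ring $k[x,y]$. As a consistency check, this matches Lemma \ref{lemma:HomDimensions}, which gives $\dim R_i = \dim\Hom(L_0,L_i) = i+1$, the Hilbert function of $k[x,y]$ in its standard grading. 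In particular $R$ is noetherian.

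Corollary \ref{corollary:Polishchuk} then yields $\HH/\HH_0 \cong \qgr R = \qgr k[x,y]$, and Serre's theorem identifies $\qgr k[x,y]$ with $\coh \Proj k[x,y] = \coh\bP^1$, which is the claim. I do not anticipate a real obstacle here: the argument is essentially a bookkeeping exercise assembling the preceding results. The only points requiring a little care are confirming that the $\HH_0$ of Corollary \ref{corollary:Polishchuk} coincides with the one defined just before it, and that $F$ matches the graded-\emph{algebra} structure of $R$ and not merely its graded-vector-space structure; the latter is immediate from functoriality of $F$.
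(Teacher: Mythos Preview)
Your proposal is correct and follows essentially the same route as the paper's proof: both identify $R(\EE)$ with $k[x,y]$ via the fully faithful functor of Proposition~\ref{proposition:AmpleRingByProjectiveLine} and then invoke Corollary~\ref{corollary:Polishchuk}. You are simply more explicit than the paper about verifying the hypotheses (coherence of $\EE$, noetherianity of $R$, the matching of $\HH_0$); the one small caveat is that functoriality of $F$ strictly gives an isomorphism of the associated $\bZ$-algebras $A(\EE)\cong A((\OO(n)))$ rather than directly of the graded rings, but this already suffices for $\cohproj A(\EE)\cong\coh\bP^1$ via Theorem~\ref{theorem:Polishchuk}, so the conclusion is unaffected.
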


\begin{proof}
It follows from Proposition \ref{proposition:AmpleRingByProjectiveLine} that $R(\EE) \cong k[x,y]$ as graded rings (with the usual grading), so that Theorem \ref{corollary:Polishchuk} yields $\HH / \HH_0 \cong \coh \bP^1$.
\end{proof}

\subsubsection{Objects of finite length}

In order to construct a tilting object in $\HH$, we will need more information about the simple tubes in $\HH$.  To do this, we will discuss the subcategory $\HH_f$ of $\HH$ consisting of all objects of finite length, thus $\HH_f$ is the full subcategory of $\HH$ consisting of all simple tubes of $\HH$ (see Lemma \ref{lemma:SphericalLength}).

Denote by $\pi: \HH \to \HH / \HH_0$ be the usual quotient functor.

\begin{lemma}
For all $X \in \Ob \HH$ and all $i \in \bZ$, we have 
$$\dim \Hom_\HH(L_i,X) \leq \dim \Hom_{\HH / \HH_0}(\pi(L_i),\pi(X)).$$
\end{lemma}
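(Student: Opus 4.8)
The plan is to compare $\Hom_\HH(L_i,X)$ with $\Hom_{\HH/\HH_0}(\pi(L_i),\pi(X))$ by using the explicit description of morphism spaces in a Serre quotient. Recall that in the quotient category, $\Hom_{\HH/\HH_0}(\pi(L_i),\pi(X)) = \varinjlim \Hom_\HH(L_i', X/X')$ where the limit runs over subobjects $L_i' \subseteq L_i$ with $L_i/L_i' \in \HH_0$ and subobjects $X' \subseteq X$ with $X' \in \HH_0$. So it suffices to exhibit, for every such pair $(L_i', X')$, an injection $\Hom_\HH(L_i,X) \hookrightarrow \Hom_\HH(L_i', X/X')$ compatible with the transition maps, since then $\Hom_\HH(L_i,X)$ injects into the colimit.

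The key point is that $L_i$ has \emph{no nonzero subobject lying in $\HH_0$}: any such subobject would, by the definition of $\HH_0$ (objects $Y$ with $\Hom(L_j,Y)=0$ for $j\ll 0$, equivalently for all $j$ by Lemma \ref{lemma:LiX}), force $\Hom(L_j, L_i')=0$ for all $j$, contradicting $\Hom(L_i,L_i')\ne 0$ (which holds since the composite $L_i' \hookrightarrow L_i$ is witnessed, together with $\dim\Hom(L_j,L_i)$ growing by Lemma \ref{lemma:HomDimensions}, to be nonzero — more simply, $\Hom(L_i,L_i')\supseteq$ any map with image in $L_i'$, and $L_i'$ being a nonzero subobject of an indecomposable non-finite-length object cannot be killed this way). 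Hence in the colimit description we may restrict to $L_i' = L_i$. Therefore $\Hom_{\HH/\HH_0}(\pi(L_i),\pi(X)) = \varinjlim_{X' \in \HH_0,\, X'\subseteq X} \Hom_\HH(L_i, X/X')$. Now for each $X'\in\HH_0$ apply $\Hom_\HH(L_i,-)$ to the short exact sequence $0 \to X' \to X \to X/X' \to 0$: this gives an exact sequence $\Hom_\HH(L_i,X') \to \Hom_\HH(L_i,X) \to \Hom_\HH(L_i,X/X')$, and since $X'\in\HH_0$ we have $\Hom_\HH(L_i,X')=0$, so $\Hom_\HH(L_i,X)\hookrightarrow \Hom_\HH(L_i,X/X')$. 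These injections are compatible with the transition maps in the colimit (they are just the maps induced by the quotients $X/X' \to X/X''$ for $X'\subseteq X''$), so passing to the colimit yields the desired injection $\Hom_\HH(L_i,X)\hookrightarrow \Hom_{\HH/\HH_0}(\pi(L_i),\pi(X))$, whence the inequality of dimensions.

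The main obstacle I anticipate is justifying cleanly that $L_i$ has no nonzero subobject in $\HH_0$; this is where the hereditary structure and Lemmas \ref{lemma:LiX} and \ref{lemma:B}/\ref{lemma:HomDimensions} enter, and one must be a little careful that ``$\Hom(L_j,L_i')=0$ for $j\ll 0$ implies $L_i'$ has no interesting maps from the $L_j$'' is genuinely incompatible with $L_i'$ being a nonzero subobject of $L_i$ — here one uses that $L_i' \hookrightarrow L_i$ is a nonzero map out of $L_i'$, but to get a contradiction with $\HH_0$-membership one instead wants a nonzero map \emph{into} $L_i'$ from some $L_j$. The cleanest route is: if $0\ne L_i' \subseteq L_i$ then for $j\le i$ the composite $\Hom(L_j,L_i')\to\Hom(L_j,L_i)$ need not be used; rather, since $L_i$ is not in $\HH_0$ and $\HH_0$ is a Serre subcategory, and $L_i/L_i'$ would have to account for the failure, one argues $L_i' \notin \HH_0$ directly because otherwise $L_i \in \HH_0$ would require both $L_i'$ and $L_i/L_i'$ in $\HH_0$ — so it suffices to also know $L_i/L_i' \in \HH_0$ fails, which is automatic unless $L_i'=L_i$ up to $\HH_0$. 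Actually the truly clean statement is: in the colimit we only need cofinal pairs, and the pairs with $L_i'=L_i$ are cofinal precisely because $L_i$ has no subobject $L_i'$ with $L_i/L_i'\in\HH_0$ other than... no — $L_i/L_i'\in\HH_0$ can happen. So the honest claim to prove is that $L_i$ has no nonzero \emph{sub}object in $\HH_0$, and this follows since a nonzero subobject $Y\subseteq L_i$ with $Y\in\HH_0$ gives, for $j\ll 0$, $0=\Hom(L_j,Y)$, yet $\Hom(L_j,L_i)\ne 0$ for all $j\le i$ (Lemma \ref{lemma:HomDimensions}) and any nonzero $L_j\to L_i$ for $j\ll 0$, composed appropriately, can be arranged to factor through $Y$ — this last factorization is the delicate point and I would prove it using that $\HH$ is hereditary and the structure of the maps $L_j \to L_i$ established above (alternatively, invoke that $\pi(L_i)\cong\OO_{\bP^1}(i)\ne 0$ from Corollary \ref{corollary:QuotientIsP} to see $L_i\notin\HH_0$, and then bootstrap to subobjects). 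I would spend the bulk of the write-up making exactly this step airtight.
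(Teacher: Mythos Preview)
Your overall strategy---use the colimit description of $\Hom$ in the Serre quotient and reduce to the case $L_i' = L_i$---is exactly what the paper does, and once that reduction is made your second paragraph is correct and matches the paper verbatim. However, you have confused yourself about \emph{which} condition on $L_i'$ appears in the colimit and hence about what needs to be checked.

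The colimit runs over subobjects $L_i' \subseteq L_i$ with $L_i/L_i' \in \HH_0$ (as you correctly state in your first paragraph). To restrict to $L_i' = L_i$, you therefore need: the only $L_i'$ with $L_i/L_i' \in \HH_0$ is $L_i' = L_i$, i.e.\ $L_i$ has no nonzero \emph{quotient} in $\HH_0$. This is immediate: if $L_i/L_i' \neq 0$, the quotient map $L_i \twoheadrightarrow L_i/L_i'$ is a nonzero element of $\Hom(L_i, L_i/L_i')$, so $L_i/L_i' \notin \HH_0$ (recall $Y \in \HH_0$ means $\Hom(L_j,Y)=0$ for all $j$). That is the entire argument the paper gives, in one line.

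Instead you claim the key point is that $L_i$ has no nonzero \emph{subobject} in $\HH_0$, and you then spend the rest of the proposal wrestling with this---noting yourself that the factorization step is ``delicate,'' trying to get a map \emph{into} $L_i'$ from some $L_j$, and eventually conceding that ``$L_i/L_i' \in \HH_0$ can happen'' (it cannot, for $L_i' \neq L_i$, by the one-line argument above). The statement about subobjects is irrelevant to the colimit reduction, and your attempted justification of it (e.g.\ ``$\Hom(L_i,L_i') \neq 0$ since the composite $L_i' \hookrightarrow L_i$ is witnessed\ldots'') conflates maps into $L_i'$ with maps out of it. Once you swap ``subobject'' for ``quotient,'' all of the difficulties in your final paragraph disappear and the proof is two sentences.
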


\begin{proof}
We know that $\Hom_{\HH / \HH_0}(\pi(L_i),\pi(X)) = \lim\limits_{\to} (\Hom_{\HH}(L'_i,X/X'))$ where the direct limit is taken over all subobjects $L'_i$ of $L_i$ and all subobjects $X'$ of $X$ such that $L_i / L'_i, X' \in \HH_0$.  By the definition of $\HH_0$, we have $L'_i = L_i$.  From $X' \in \HH_0$ follows that $\Hom_\HH(L_i,X)$ is always a subspace of $\Hom_\HH(L_i,X/X')$.  The required inequality follows easily.
\end{proof}

\begin{proposition}\label{proposition:MapsToSimple}
Every nonzero object $X \in \Ob \HH$ maps nonzero to a simple object of $\HH$.
\end{proposition}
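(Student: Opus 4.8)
The plan is to reduce to $X$ indecomposable, dispose of the finite-length case at once, and treat objects of infinite length through the identification $\HH/\HH_0 \cong \coh\bP^1$ of Corollary \ref{corollary:QuotientIsP}. First note that, since $\HH$ is hereditary, any nonzero morphism from an object to a simple object is an epimorphism; hence ``$X$ maps nonzero to a simple object'' is the same as ``$X$ has a simple quotient''. This immediately reduces the claim to indecomposable $X$: if $X = X_1 \oplus X_2$ with $X_1 \neq 0$ and $X_1$ has a simple quotient, then so does $X$. Every indecomposable object of $\HH$ lies in a tube, by Theorem \ref{theorem:TubeCriterium} (recall $\t^n \cong 1$ in $\HH$). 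If $X$ has finite length it certainly has a simple quotient, so from now on assume $X$ is indecomposable of infinite length.

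The first real step is to show that $X \notin \HH_0$; equivalently, that \emph{every object of $\HH_0$ has finite length}. This is the technical heart. I would argue that $\HH_0$ is a Serre subcategory which is stable under $\t$, that its indecomposable objects again lie in tubes, and that a tube containing an object of infinite length is never a simple tube (by Proposition \ref{proposition:SphericalInTube} the objects of a simple tube form a category of nilpotent representations of a cyclic quiver, all of finite length). It then remains to see that a non-simple tube cannot lie inside $\HH_0$: here one unwinds the construction of the coherent sequence $\EE$, using almost-ampleness (Proposition \ref{proposition:AlmostAmple}) together with the directedness of tubes (Theorem \ref{theorem:DirectingTubes}, Corollary \ref{corollary:TubesDirecting}) to produce a nonzero morphism $L_i \to Y$ for $i \ll 0$ whenever $Y$ is indecomposable in such a tube. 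Granting this, $\pi X \neq 0$ in $\coh\bP^1$.

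Now I would use that every nonzero coherent sheaf on $\bP^1$ has a simple quotient: a sheaf with a nonzero torsion part surjects onto some skyscraper $k(P)$, while a locally free sheaf $\mathcal{F}$ of rank $r \geq 1$ surjects onto its fibre $\mathcal{F}|_P \cong k(P)^{\oplus r}$. Fix a surjection $\pi X \twoheadrightarrow k(P)$. By the description of morphisms in the Serre quotient $\HH/\HH_0$, this morphism is represented by a genuine morphism $f\colon X' \to Y/Y'$ in $\HH$, where $X' \subseteq X$ and $Y' \subseteq Y$ are subobjects with $X/X', Y' \in \HH_0$ and $\pi Y \cong k(P)$; since $\HH_0$ is a length category we may take $Y$, hence also $Y/Y'$, of finite length. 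If $X' = X$ then $f$ is already a nonzero morphism from $X$ to a finite-length object. If $X' \subsetneq X$, form the pushout $Z$ of $f$ along $X' \hookrightarrow X$; this gives a short exact sequence $0 \to Y/Y' \to Z \to X/X' \to 0$ together with a morphism $g\colon X \to Z$ whose composite with $Z \to X/X'$ is the quotient map $X \twoheadrightarrow X/X' \neq 0$, so that $g \neq 0$, and $Z$ has finite length as an extension of the finite-length objects $X/X'$ and $Y/Y'$. In either case $X$ admits a nonzero morphism to a finite-length object $W$; its image is a nonzero object of finite length, hence has a simple quotient $\Sigma$, and the induced epimorphism $X \twoheadrightarrow \Sigma$ is the morphism we want.

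The main obstacle is the second paragraph: proving that $\HH_0$ consists of objects of finite length (needed both to conclude $\pi X \neq 0$ and to pick a finite-length representative of $k(P)$). Everything else is formal bookkeeping with the Serre quotient and a pushout.
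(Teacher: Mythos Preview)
Your reduction to the infinite-length case and your final pushout argument are fine, but the ``technical heart'' --- that every object of $\HH_0$ has finite length --- is not established, and the argument you sketch does not go through as written. Proposition~\ref{proposition:AlmostAmple} only guarantees that for an indecomposable $Y$ there exists \emph{some} $n$ with $\Hom(\tau^n L_i,Y)\neq 0$ for $i\ll 0$; it says nothing about $n=0$. Since $\HH_0$ is defined using the fixed sequence $(L_i)$ (not $(\tau^n L_i)$), almost-ampleness alone does not rule out an indecomposable of infinite length sitting in $\HH_0$. The appeals to directedness of tubes do not bridge this gap: $\HH_0$ is not obviously $\tau$-stable (because $\tau^{-1}L_i$ need not be one of the $L_j$), so knowing that one object in a non-simple tube lies outside $\HH_0$ does not propagate to the others. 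This same unproven claim is used a second time when you assert that the representative $Y$ of $k(P)$ and the quotient $X/X'$ may be taken of finite length.

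The paper avoids this problem entirely by a trick you are missing: it does not try to control $\HH_0$ globally. Given $X$ of infinite length, it picks an infinite chain of quotients with nonzero kernels $K_k$, applies Proposition~\ref{proposition:AlmostAmple} to each $K_k$, and uses pigeonhole to find a single $n$ that works for infinitely many $K_k$. It then \emph{replaces} $(L_i)$ by $(\tau^n L_i)$; this leaves $\HH$ unchanged (the aisle is $\tau$-invariant) but alters $\HH_0$ and $\pi$, and now $\pi X$ visibly has infinite length in $\coh\bP^1$. Finally, rather than lifting a map to an arbitrary $k(P)$, the paper targets the fixed semi-simple object $E$: since $\pi X$ has infinite length it maps nonzero to $\pi E$, and the lift $\Hom_\HH(X,E)\neq 0$ is obtained directly via Serre duality and Proposition~\ref{proposition:SimplesInTubes}, with no need for your pushout construction or any finite-length hypothesis on $\HH_0$.
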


\begin{proof}
If $X$ has finite length, then the statement is trivial.  So assume $X$ has infinite length, so that there is a sequence of epimorphisms
$$X=X_0 \to X_1 \to X_2 \to \cdots \to X_k \to \cdots$$
where the kernels $K_k = \ker(X_k \to X_{k+1})$ are nonzero.  By Proposition \ref{proposition:AlmostAmple}, there is an $n \in \bN$ such that the sequence $(\t^n L_i)_{i \in \bZ}$ maps nonzero to $K_k$ for an infinite number of $k$'s.  We may choose notations such that this sequence is $(L_i)_{i \in \bZ}$.

Thus in $\HH / \HH_0$, there is an infinite sequence of epimorphisms
$$\pi X=\pi X_0 \to \pi X_1 \to \pi X_2 \to \cdots \to \pi X_k \to \cdots$$
where infinitely many morphisms are not invertible.  Indeed, the functor $\pi$ is exact and infinitely many kernels are nonzero.  This shows $\pi(X)$ has infinite length.

Since every short exact sequence in $\HH / \HH_0$ is induced from a short exact sequence in $\HH$ (\cite[Corollaire III.1]{Gabriel62}) and $E$ semi-simple, we know that $\pi E$ is semi-simple in $\HH / \HH_0$ as well.  In $\HH / \HH_0 \cong \coh \bP^1$ (see Corollary \ref{corollary:QuotientIsP}) every indecomposable object of infinite length maps nonzero to every simple object.  We find that $\Hom_{\HH / \HH_0}(\pi X,\pi E) \not= 0$ and claim this implies $\Hom_\HH(X,E) \not=0$.

By the definition of a quotient category, we find a subobject $X'$ of $X$ such that $\Hom_{\HH}(X',E) \not= 0$, thus either $\Hom_\HH(X,E) \not= 0$, and we are done, or $\Hom_\HH(E,X/X') \cong \Ext_\HH(X/X',E)^* \not= 0$.  In the latter case, since $E$ is semi-simple and thus lies in a simple tube, it follows from Proposition \ref{proposition:SimplesInTubes} that $X/X'$ also has an indecomposable direct summand in the same simple tube.  Thus $\Hom_\HH(X/X',E) \not= 0$ and hence also $\Hom_\HH(X,E) \not=0$.
\end{proof}

\begin{proposition}\label{proposition:UniqueSimple}
In every simple tube of $\HH$ there is exactly one peripheral object $T$ such that $\Hom(L,T) \not= 0$.
\end{proposition}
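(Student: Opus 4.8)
The plan is to transfer the question about simple tubes of $\HH$ to the already-understood category $\HH/\HH_0 \cong \coh\bP^1$ (Corollary \ref{corollary:QuotientIsP}), and there count maps from the image of $L$ into the simple objects. First I would fix a simple tube $\TT$ of $\HH$ and let $\pi : \HH \to \HH/\HH_0$ be the quotient functor. The key point is that $\TT$ is not killed by $\pi$: since $L$ is peripheral and lies in a nonhomogeneous tube (not the tube $\TT_E$), the sequence $\EE$ does map nonzero to objects of $\TT$ after a $\t$-shift, so by Proposition \ref{proposition:AlmostAmple} the objects of $\TT$ do not lie in $\HH_0$. Thus $\pi$ restricts to a functor from (the finite-length objects of) $\TT$ into $\coh\bP^1$; because $\pi$ is exact and a simple object $T$ of $\TT$ has no proper subobjects outside $\HH_0$, $\pi(T)$ is again simple in $\coh\bP^1$, i.e. isomorphic to some $k(P)$. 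Moreover, two distinct peripheral objects $T, T'$ in the same tube $\TT$ are non-isomorphic and have $\Hom(T,T')$ controlled by the tube structure (nilpotent representations of a cyclic $\tilde A_n$), so their images under $\pi$ land on the same point $P$ but are distinguished inside $\TT$.

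Next I would count. By the lemma immediately preceding this proposition, $\dim\Hom_\HH(L,T) \leq \dim\Hom_{\HH/\HH_0}(\pi(L),\pi(T))$; and $\pi(L) \cong L_0 \cong \OO_{\bP^1}$ under the equivalence of Proposition \ref{proposition:AmpleRingByProjectiveLine}. Since $\dim\Hom_{\coh\bP^1}(\OO_{\bP^1}, k(P)) = 1$ for every closed point $P$, we get $\dim\Hom_\HH(L,T) \leq 1$ for every peripheral $T$ in $\TT$, so there is \emph{at most} one such $T$ with $\Hom(L,T)\neq 0$ in each tube — but I must rule out two distinct peripheral objects of $\TT$ both receiving a nonzero map from $L$. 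Here I would use that $\TT$ is a simple tube: the peripheral (simple) objects $T_1,\dots,T_s$ of $\TT$ all have $\pi(T_j) \cong k(P)$ for the \emph{same} point $P$, but $\dim\Hom_{\coh\bP^1}(\OO_{\bP^1},k(P)) = 1$ while the images $\pi(T_1),\dots,\pi(T_s)$ together would force $\sum_j \dim\Hom_\HH(L,T_j) \leq \dim\Hom_{\HH/\HH_0}(\pi L, \pi(\oplus_j T_j))$, and the latter equals $s$ only if $\oplus_j T_j$ maps to $k(P)^{\oplus s}$. To get the sharp bound I would instead observe directly that an extension $B'$ of all the $T_j$ (one copy each) is, by the argument of Lemma \ref{lemma:B}, an indecomposable object of $\TT$ mapping under $\pi$ to $k(P)$, hence $\dim\Hom_\HH(L,B') \leq \dim\Hom_{\coh\bP^1}(\OO_{\bP^1},k(P)) = 1$; since $B'$ surjects onto each $T_j$ when $T_j = \top B'$ and has each $T_j$ as a subquotient, a nonzero map $L\to T_j$ for two indices $j$ would lift/push to give $\dim\Hom(L,B')\geq 2$, a contradiction. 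This yields uniqueness.

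For \emph{existence}, I would argue that at least one peripheral object of $\TT$ receives a nonzero map from $L$. Take any simple object $T$ of $\TT$; by Proposition \ref{proposition:MapsToSimple} applied in a shifted form, or more directly by the structure of $\coh\bP^1$, $\pi(L) \cong \OO_{\bP^1}$ maps nonzero to $\pi(T) \cong k(P)$. As in the proof of Proposition \ref{proposition:MapsToSimple}, a nonzero map $\pi(L) \to \pi(T)$ comes from a map $L' \to T$ with $L/L' \in \HH_0$, and since $L/L' = 0$ (objects of $\HH_0$ have no $L_i$ mapping in, but $L = L_0$ maps nonzero to itself, so $L' = L$), we get $\Hom_\HH(L,T) \neq 0$ directly — but one must take $T$ peripheral. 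If $T$ is not peripheral, replace it by the peripheral object $T''$ of $\TT$ with $\Hom(T,T'') \neq 0$ (the simple top of $T$ in the nilpotent-representation picture); then $\Hom(L,T) \neq 0$ gives $\Hom(L,T'') \neq 0$ by composition. Combined with the uniqueness above, exactly one peripheral object $T$ of $\TT$ has $\Hom(L,T)\neq 0$.

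\textbf{Main obstacle.} I expect the delicate step to be the sharp counting argument for uniqueness: one must pass carefully between $\HH$ and $\coh\bP^1$ and keep track of \emph{which} point $P$ the simple tube $\TT$ maps to, and then squeeze a bound of exactly $1$ out of $\dim\Hom_{\coh\bP^1}(\OO_{\bP^1},k(P))=1$. The subtlety is that the inequality $\dim\Hom_\HH(L,T)\leq\dim\Hom_{\HH/\HH_0}(\pi L,\pi T)$ only bounds each individual Hom, so to forbid two distinct peripheral objects simultaneously receiving maps one needs to test $L$ against a single indecomposable object of $\TT$ (such as the length-$s$ object $B'$ built in Lemma \ref{lemma:B}) whose image is exactly $k(P)$; verifying that a pair of nonzero maps $L\to T_j$ would genuinely produce a $2$-dimensional $\Hom(L,B')$ requires the convexity and standardness of tubes from Theorem \ref{theorem:TubeMain} to ensure no cancellation. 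Everything else is a routine application of the quotient-category machinery and the explicit description of $\coh\bP^1$.
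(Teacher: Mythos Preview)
Your overall strategy—pass to the quotient $\HH/\HH_0 \cong \coh\bP^1$ and exploit $\pi(L)\cong\OO_{\bP^1}$—is exactly the paper's. But your uniqueness argument has a genuine gap, and it is precisely the step you flagged as the main obstacle.

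You assert that the length-$s$ indecomposable $B'$ in $\TT$ satisfies $\pi(B')\cong k(P)$. This is circular: $B'$ has a composition series with factors $T_1,\dots,T_s$, so $\pi(B')$ is a torsion sheaf on $\bP^1$ of length exactly $\#\{j:\pi T_j\neq 0\}$. If two peripheral objects survive the quotient, then $\pi(B')$ has length $\geq 2$, hence $\dim\Hom(\OO_{\bP^1},\pi B')\geq 2$, and your inequality $\dim\Hom_\HH(L,B')\leq\dim\Hom_{\HH/\HH_0}(\pi L,\pi B')$ yields no contradiction. Invoking Lemma~\ref{lemma:B} does not help: its analogue for $\TT$ gives $\dim\Hom(L,B')=\sum_j\dim\Hom(L,T_j)$, and combining this with the inequality only recovers the tautology $\#\{j:\Hom(L,T_j)\neq 0\}\leq\#\{j:\pi T_j\neq 0\}$.

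The paper avoids this by controlling endomorphisms rather than maps from $L$. Given peripheral $T\not\cong T'$ with $\pi T,\pi T'\neq 0$, it takes the indecomposable $X\in\TT$ with simple top $T$, simple socle $T'$, and $\dim\End(X)=1$ (available in the $\nilp\tilde A_n$ model). Because the top and socle of $X$ both lie outside $\HH_0$, one checks that $X$ has no proper subobject nor proper quotient in $\HH_0$; hence the direct limit computing $\End_{\HH/\HH_0}(\pi X)$ collapses to $\End_\HH(X)=k$, so $\pi X$ is indecomposable. The path $\pi T'\hookrightarrow\pi X\twoheadrightarrow\pi T$ then forces the simples $\pi T,\pi T'$ into the same tube of $\coh\bP^1$; since those tubes are homogeneous, $\pi T\cong\pi T'$. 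Lifting back (again using that $T,T'$ are simple and not in $\HH_0$) gives $\Hom_\HH(T,T')\neq 0$, hence $T\cong T'$. The missing idea in your attempt is to produce an indecomposable whose image in $\coh\bP^1$ you can \emph{prove} is indecomposable, rather than one whose image you hope is simple.

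A smaller issue: your existence paragraph assumes $\pi T\neq 0$ for an \emph{arbitrary} simple $T$ of $\TT$, but this is false for all but one peripheral object—that is the content of the proposition. Proposition~\ref{proposition:AlmostAmple} only gives $\Hom(\tau^n L_i,T)\neq 0$ for some $n$; one replaces $T$ by $\tau^{-n}T$ to obtain a peripheral object outside $\HH_0$, as the paper does.
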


\begin{proof}
Let $\KK$ be a simple tube in $\HH$.  It follows from Proposition \ref{proposition:AlmostAmple} that there is at least one peripheral, and hence simple, object $T$ such that $\pi T \not \cong 0$.  In this case, $\pi T$ is also simple, hence it is a peripheral object in a simple tube in $\HH / \HH_0 \cong \coh \bP^1$.

Next, we show $\KK$ has only one such peripheral object.  Let $T' \in \ind \HH$ be a peripheral object of $\KK$ with $T \not\cong T'$ and $\pi T' \not\cong 0$.  Since $\KK$ corresponds to an abelian subcategory of $\HH$ equivalent with $\mod \tilde{A}_n$ where $\tilde{A}_n$ has cyclic orientation, there is an object $X \in \ind \HH$ with simple top $T$, simple socle $T'$, and $\dim \End(X) = 1$.  Note that $\Hom(L,T') \not= 0$ implies that $\Hom(L,X) \not= 0$ so that $\pi X \not\cong 0$.

We now readily verify that $\Hom(\pi X, \pi T) \not= 0$, $\Hom(\pi T', \pi X) \not= 0$, and $\dim \End(\pi X) = 1$.  The last statement shows $\pi X$ is indecomposable.  We have shown there is a path from $\pi T'$ to $\pi T$, and thus $\pi T'$ and $\pi T$ lie in the same simple tube of $\HH / \HH_0$, hence $\pi T \cong \pi T'$.  This implies $\Hom(T,T') \not= 0$ and hence $T \cong T'$.
\end{proof}

\subsubsection{A tilting object}

In this final step in the classification of indecomposable abelian hereditary categories which are fractionally Calabi-Yau of dimension 1 but not 1-Calabi-Yau, we will use the previous results to construct a tilting complex in the derived category.

Let $\AA$ be such a category.  We may choose objects $L$ and $S$ as in Proposition \ref{proposition:LS} and use these to find a hereditary category $\HH$ derived equivalent to $\AA$ as in \S\ref{subsubsection:tFractional}.  This object $L$ is exceptional, it will be our starting point of the tilting object.

Let $\bX$ be a set parameterizing the simple tubes of $\HH$, thus with every $x \in \bX$, there corresponds a unique $\TT_x$.  Proposition \ref{proposition:UniqueSimple} yields that such a tube has a unique simple object $S_x$ such that $\Hom(L,S_x) \not= 0$.  Denote by $s_x$ the rank of the tube $\TT_x$ and write $E_x = \oplus_{i=1}^{s_x} \t^{-i} S_x$.

With every simple tube $\TT_x$, there corresponds a twist functor $T_{E_x} : \Db \HH \to \Db \HH$, which restricts to an autoequivalence $\HH \to \HH$, also denoted by $T_{E_x}$.  We will write $T_{E_x}^i L$ as $L^{(x)}_i$.

\begin{lemma}
The set $\LL = \{L^{(x)}_i \mid x \in \bX, 0 \leq i \leq s_x \}$ forms a partial tilting set, i.e. $\Hom(A,B[n]) = 0$ for all $A,B \in \LL$ and $n \not= 0$.  Moreover, for all $X \in \HH$, there is an $A \in \LL$ such that $\Hom(A,X) \not= 0$ or $\Ext(A,X) \not= 0$.
\end{lemma}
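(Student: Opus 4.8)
The plan is to prove the two assertions separately: first that $\LL$ is a partial tilting set, and then that it "generates" $\HH$ in the stated weak sense.

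For the partial tilting statement, the key observation is that each twist functor $T_{E_x}$ is an autoequivalence of $\Db \HH$ (Theorem \ref{theorem:TwistDerived}, since each $E_x$ is a generalized $1$-spherical object, being a sum of peripheral objects of a simple — hence generalized standard — tube), and that these autoequivalences fix $\HH$ as a subcategory. So it suffices to understand the objects $L^{(x)}_i = T_{E_x}^i L$ for fixed $x$ and the Hom- and Ext-spaces between objects coming from different tubes. Within a single tube: the object $L$ is exceptional and does not lie in $\TT_x$, so applying $T_{E_x}$ repeatedly produces a "coherent-sequence-like" family, and by the same argument used for the sequence $\EE$ in \S\ref{subsubsection:tFractional} (Lemma \ref{lemma:EE2}), one gets $\Hom(L^{(x)}_i, L^{(x)}_j[n]) = 0$ for $n \neq 0$ and $0 \le i,j \le s_x$; this is essentially the statement that the $L^{(x)}_i$ behave like $\OO, \OO(1), \ldots, \OO(s_x)$ pulled back along the simple tube $\TT_x$, exactly as in the weighted projective line picture. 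Across two distinct tubes $\TT_x, \TT_y$: here I would use that $L^{(x)}_i$ and $L^{(y)}_j$ each lie in tubes distinct from one another (none of them lie in a tube other than through the twist construction starting from the exceptional $L$, and twisting by $E_x$ moves $L$ into yet other tubes), together with Theorem \ref{theorem:TubeMain}(2) (convexity) and Corollary \ref{corollary:TubesDirecting}: if two indecomposables lie in (different) tubes and there is a nonzero map one way, there is no extension. One also needs that there is no suspended path realizing a nonzero $\Hom(A,B[n])$ for $n \ge 1$ or $n \le -1$; this follows because $\AA$ (hence $\HH$) is hereditary, so $\Hom(A,B[n]) = 0$ automatically for $|n| \ge 2$, and for $n = \pm 1$ one invokes $\Ext = 0$ via the tube-directedness. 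I would organize this as: (i) reduce to $n = 1$ by heredity; (ii) handle same-tube pairs using the $\EE$-type computation; (iii) handle different-tube pairs using Corollary \ref{corollary:TubesDirecting}, noting $\Ext(A,B) \cong \Hom(B,\t A)^*$ and that $\t$ preserves tubes.

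For the generation statement, suppose $X \in \HH$ is nonzero with $\Hom(A,X) = \Ext(A,X) = 0$ for every $A \in \LL$; I seek a contradiction. Taking $A = L = L^{(x)}_0$ for any $x$, we have $\Hom(L,X) = 0$. By Proposition \ref{proposition:MapsToSimple}, $X$ maps nonzero to a simple object $T$ of $\HH$, which is a peripheral object of some simple tube $\TT_x$. The key point is then Proposition \ref{proposition:UniqueSimple}: in $\TT_x$ there is a unique peripheral object $T_x^{\mathrm{can}}$ receiving a nonzero map from $L$, and the simple objects of $\TT_x$ are precisely $\t^{-i} S_x$, $0 \le i < s_x$. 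Now I would trace what the twist functors $T_{E_x}$ do to the relation between $L$ and the objects of $\TT_x$: the triangles defining $L^{(x)}_{i+1}$ from $L^{(x)}_i$ (analogues of \eqref{equation:TriangleDefiningL}) show, exactly as in Remark \ref{Remark:LS} and the short exact sequences following it, that $\Hom(L^{(x)}_i, -)$ restricted to $\TT_x$ "rotates" through the simple objects, so that the full collection $\{L^{(x)}_i : 0 \le i \le s_x\}$ together detects every simple object of $\TT_x$: for each $i$ with $0 \le i < s_x$ there is some $j$ with $\Hom(L^{(x)}_j, \t^{-i} S_x) \neq 0$ (or the dual Ext statement). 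Applying $\Hom(L^{(x)}_j, -)$ to the nonzero map $X \to T = \t^{-i} S_x$, and using that $T$ is simple hence the map is an epimorphism, either gives $\Hom(L^{(x)}_j, X) \neq 0$ directly, or a connecting map forces $\Ext(L^{(x)}_j, K) \neq 0$ for the kernel $K$ and we push the argument down — in any case contradicting the assumption. Alternatively and more cleanly: if $\Hom(A,X) = 0$ for all $A \in \LL$ then in particular $\Hom(L,X) = 0$, and combining Proposition \ref{proposition:AlmostAmple} (some $\t^n L_i$ maps nonzero to $X$ for $i \ll 0$) with the fact that each $\t^n L$ is, up to the $t$-action, obtainable from the $L^{(x)}_i$ by twists, yields the nonvanishing of $\Hom(A,X)$ or $\Ext(A,X)$ for a suitable $A \in \LL$.

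The main obstacle I anticipate is the across-tubes vanishing $\Hom(L^{(x)}_i, L^{(y)}_j[1]) = 0$ in the partial-tilting part: one must be sure that $L^{(x)}_i$ and $L^{(y)}_j$ genuinely lie in distinct tubes (so Corollary \ref{corollary:TubesDirecting} applies) and that at least one of $\Hom(L^{(x)}_i, L^{(y)}_j)$, $\Hom(L^{(y)}_j, L^{(x)}_i)$ can be shown nonzero or else both Ext-groups vanish for degree reasons — this requires keeping careful track of which tubes the twisted line bundles inhabit, using that $L$ is exceptional (lies in a non-homogeneous tube only if... — actually $L$ need not lie in a tube at all as an object of $\HH$; it is an exceptional object, and the $L^{(x)}_i$ for $i$ strictly between $0$ and $s_x$ are the "intermediate" exceptional objects, none lying in $\TT_x$). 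Carefully establishing that these intermediate objects are pairwise in distinct tubes, or not in tubes at all, and that Corollary \ref{corollary:TubesDirecting} or plain heredity covers every pair, is the delicate bookkeeping step.
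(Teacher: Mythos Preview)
Your generation argument is essentially the paper's: once $X$ is $\Hom$- and $\Ext$-orthogonal to every $L^{(x)}_i$, the short exact sequences force it to be orthogonal to every quotient $\Ext(E_x,L^{(x)}_i)\otimes_{A_x}E_x$; since these quotients run through all simples of $\HH$ (this is where Proposition~\ref{proposition:UniqueSimple} enters, making each quotient a single simple $\tau^{-i-1}S_x$), Proposition~\ref{proposition:MapsToSimple} together with Serre duality gives the contradiction. Your ``alternative'' via Proposition~\ref{proposition:AlmostAmple} is unnecessary.

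The gap is exactly where you locate it, and it is not merely bookkeeping. Corollary~\ref{corollary:TubesDirecting} only yields $\Ext(A,B)=0$ \emph{given} $\Hom(A,B)\neq 0$; your fallback ``both Ext-groups vanish for degree reasons'' is unjustified, and determining which tubes the various $L^{(x)}_i$ inhabit (and whether $\Hom$ is nonzero between them) is neither easy nor needed. The paper's approach avoids this entirely by using the short exact sequences for both variables. First, every $L^{(x)}_i$ has infinite length in $\HH$ (its image in $\HH/\HH_0\cong\coh\bP^1$ is a line bundle), so lies in no simple tube; since each simple tube receives a map from $L$, Proposition~\ref{proposition:SimplesInTubes} makes every simple tube a sink, whence $\Ext(L^{(x)}_i,C)=0$ for all $C$ in any simple tube via Serre duality. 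Inducting on $j$ through the exact sequences then reduces $\Ext(L^{(x)}_i,L^{(y)}_j)=0$ to $\Ext(L^{(x)}_i,L)=0$, irrespective of whether $x=y$. For this last vanishing your appeal to Lemma~\ref{lemma:EE2} is not quite enough (that lemma only gives $\Ext(L_j,L_i)=0$ for $i+1\ge j$): one needs that the successive quotients are the \emph{pairwise distinct} simples $\tau^{-1}S_x,\ldots,\tau^{-s_x}S_x$, which uses Proposition~\ref{proposition:UniqueSimple} and the bound $\dim\Hom_\HH(L,S_x)\le\dim\Hom_{\HH/\HH_0}(\pi L,\pi S_x)=1$. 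With this in hand, the same induction (now on $i$, applying $\Hom(L^{(x)}_i,-)$ to $0\to L\to L^{(x)}_1\to\tau^{-1}S_x\to 0$) gives $\Ext(L^{(x)}_i,L)=0$ for $0\le i\le s_x$.
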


\begin{proof}
This follows easily from the exact sequences
$$0 \to L^{(x)}_i \to L^{(x)}_{i+1} \to \Ext(E_x,L^{(x)}_i) \otimes_{A_x} E_x \to 0,$$
where $A_x = \End(E_x)$, together with Proposition \ref{proposition:MapsToSimple}.
\end{proof}

Since there are nonzero maps from $L^{(x)}_{s_x}$ to $L^{(y)}_{s_y}$ for all $x,y \in \bX$, we see that $L^{(x)}_{s_x} \cong L^{(y)}_{s_y}$.  We may sketch the partial tilting set as in Figure \ref{figure:CanonicalObject} where $\dim \Hom(L,L_1)= 2$.

\begin{figure}
	\centering
		$$\xymatrix{
& L^{(x_1)}_{1} \ar[r]^{f_{x_1}} & L^{(x_1)}_{2} \ar[r]^{f_{x_1}} & \cdots \ar[r]^{f_{x_1}} & L^{(x_1)}_{s_{x_1} - 1} \ar[rdd]^{f_{x_1}} & \\
& L^{(x_2)}_{1} \ar[r]_{f_{x_2}} & L^{(x_2)}_{2} \ar[r]_{f_{x_2}} & \cdots \ar[r]_{f_{x_2}} & L^{(x_2)}_{s_{x_2} - 1} \ar[rd]_{f_{x_2}}& \\
L \ar[ruu]^{f_{x_1}} \ar[ru]_{f_{x_2}} \ar[dr]_{f_{x_t}}&  \vdots & && \vdots & L_1 \\
& L^{(x_t)}_{1} \ar[r]_{f_{x_t}} & L^{(x_t)}_{2} \ar[r]_{f_{x_t}} & \cdots \ar[r]_{f_{x_t}} & L^{(x_t)}_{s_{x_t} - 1} \ar[ru]_{f_{x_t}}& \\
& & \vdots & \vdots &}$$
\caption{The partial tilting set $\LL$}
\label{figure:CanonicalObject}
\end{figure}

In order to show this is a tilting object in $\Db \AA$, we need to show $\LL$ has only finitely many elements, or equivalently, that $\HH$ has only finitely many nonhomogeneous simple tubes (each nonhomogeneous tube gives an ``arm'' in $\LL$).  Let $\aa$ be the preadditive subcategory of $\Db \AA$ generated by $\LL$.  Following \cite[Theorem 5.1]{vanRoosmalen06}, there is a fully faithful functor $\Db \mod \aa \to \Db \AA$ where $\mod \aa$ is the category of finitely presented contravariant functors $\aa \to \mod k$.  Denote by $\aa'$ the full additive subcategory of $\aa$ generated by the set $\LL' = \LL \setminus \{L_1\}$.

\begin{proposition}
The category $\HH$ has only finitely many nonhomogeneous simple tubes.
\end{proposition}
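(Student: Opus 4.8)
The plan is to realise the preadditive category $\aa'$ as the path category $kQ'$ of a star-shaped quiver whose number of arms equals the number $t$ of nonhomogeneous simple tubes of $\HH$, and then to use that a star with five or more arms is of wild representation type.

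\emph{Step 1: the shape of $\aa'$.} A homogeneous tube contributes no object to $\LL'$ beyond the central object $L$, while a nonhomogeneous tube $\TT_x$ of rank $s_x\ge 2$ contributes the chain $L=L^{(x)}_0,L^{(x)}_1,\dots,L^{(x)}_{s_x-1}$. I would use the defining short exact sequences
$$0\to L^{(x)}_i\to L^{(x)}_{i+1}\to \Ext(E_x,L^{(x)}_i)\otimes_{A_x}E_x\to 0$$
together with Proposition \ref{proposition:LS}(3) (so that $\Hom(L,\t^{-j}S_x)=0$ for $0<j<s_x$) and Proposition \ref{proposition:SphericalInTube} (identifying $\TT_x$ with $\nilp\tilde{A}_{s_x}$) to check that the cokernels occurring are direct sums of objects $\t^{-j}S_x$ with $0<j<s_x$, that $\dim\Hom(L,L^{(x)}_i)=1$ for $0\le i\le s_x-1$, that the composite $L\to L^{(x)}_1\to\cdots\to L^{(x)}_i$ is a nonzero monomorphism, that $\dim\Hom(L^{(x)}_i,L^{(x)}_{i+1})=1$, and that there are no nonzero morphisms between the interiors of two distinct arms (nor from any $L^{(x)}_i$ back to $L$, since $\End(\bigoplus\LL')$ is finite dimensional). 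Combined with the fact that $\LL$ is a partial tilting set, so that all $\Ext$-groups vanish among members of $\LL'$, this identifies $\aa'$ with the hereditary path category of the star $Q'$ having one arm per nonhomogeneous tube, the arm coming from $\TT_x$ having $s_x-1\ge 1$ interior vertices.

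\emph{Step 2: the contradiction.} Suppose $\HH$ had infinitely many nonhomogeneous tubes, hence at least five. I would pick five of them and let $\bb$ be the full subcategory of $\aa'$ on $L$ together with the first interior vertex of each of the corresponding five arms; then $\bb$ is the path algebra of an orientation of the five-subspace quiver, which is wild, so by the proposition characterising wild hereditary algebras there is an indecomposable $Z\in\mod\bb$ with $\chi(Z,Z)=\dim\Hom(Z,Z)-\dim\Ext(Z,Z)<0$. Regarding $Z$ as a finite dimensional $\aa'$-module supported on $\bb$ and extended by zero elsewhere, its endomorphism ring and self-extension space computed in $\mod\aa'$ are unchanged (as $\bb$ is a full subcategory), so $Z$ is an indecomposable object of $\mod\aa'$ with $\chi(Z,Z)<0$. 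Applying the fully faithful exact functor $\Db\mod\aa'\to\Db\AA$ of \cite[Theorem 5.1]{vanRoosmalen06} produces an indecomposable object of $\Db\AA$ with negative Euler form. But $\CYdim\AA=1$ forces $\t^n\cong 1$, so every indecomposable $X\in\Db\AA$ satisfies $\t^nX\cong X$ and therefore lies in a tube by Theorem \ref{theorem:TubeCriterium}; by Proposition \ref{proposition:SphericalInTube} such a tube is equivalent to $\nilp\tilde{A}_m$, in which $\dim\Hom(X,X)-\dim\Ext(X,X)\ge 0$ for every indecomposable $X$. This contradiction shows $\HH$ has at most four, in particular finitely many, nonhomogeneous simple tubes.

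The hard part will be Step 1: verifying that each arm arrow has multiplicity one, that there are no relations along the arms, and that distinct arms do not interact. This amounts to tracking the cokernels of the twist short exact sequences in the neighbourhood of each special point $x$ — using that every peripheral object of $\TT_x$ other than $S_x$ becomes zero in $\HH/\HH_0\cong\coh\bP^1$ — and is where the bookkeeping is genuinely needed; once $\aa'$ is identified with the star $kQ'$, Step 2 is purely formal.
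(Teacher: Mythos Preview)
Your argument is correct and follows the same strategy as the paper: realise enough of the partial tilting set as a star-shaped hereditary algebra, so that five or more arms force wildness and hence an indecomposable with $\chi<0$, contradicting that every indecomposable of $\Db\AA$ lies in a tube. The paper streamlines your Step~1 by working only with $L$ and the $L^{(x)}_1$ (so the bookkeeping along the arms is unnecessary), and in Step~2 it embeds $\Db\mod(\End A)$ directly into $\Db\HH$ via the partial tilting object rather than passing through $\mod\aa'$; note that your justification ``as $\bb$ is a full subcategory'' covers $\Hom$ but not $\Ext$ in general---it works here only because $\bb$ is convex in the star, which you may as well sidestep by using the direct embedding.
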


\begin{proof}
Consider the full additive subcategory $\aa' \subset \HH$ generated by the objects $L$ and $L^{(x)}_1$, for each nonhomogeneous simple tube $x$.  Following \cite[Theorem 1.6]{AuslanderReiten75} (see also \cite[Proposition 4.2]{vanRoosmalen06}) we know that $\aa'$ is semi-hereditary (i.e. $\mod \aa'$ is abelian and hereditary) if and only $\End A$ is a hereditary algebra for all objects $A \in \Ob \aa'$.  This last statement is true since each such algebra is either semi-simple (if $L$ is not a direct summand of $A$) or Morita equivalent to the path algebra of a quiver
$$\xymatrix@R=0pt{& \bullet \\
\bullet \ar[ur] \ar[dr]& \vdots \\
& \bullet}$$
This last algebra is of wild type if $Q$ has at least 6 vertices, and thus there is an indecomposable object $Z \in \mod A$ with $\chi(Z,Z) = \dim \Hom(Z,Z) - \dim \Ext(Z,Z) < 0$ and the existence of a fully faithful functor $\Db \mod (\End A) \to \Db \HH$ shows that $\Db \HH$ also has such an object.  However, every object of $\HH$ lies in a generalized standard tube, and thus $\chi(X,X) \geq 0$ for all indecomposables $X$ in $\Db \HH$.

We see that $Q$ cannot have more than 5 vertices, and hence $\HH$ cannot have more than 4 nonhomogeneous simple tubes.
\end{proof}

Denote by $S_L$ the simple representation $\LL(-,L) / \rad(-,L)$ associated to $L$ and by $S_{L_1}$ the simple representation $\LL(-,L_1) / \rad(-,L_1)$ associated to $L_1$.  If $\LL$ has infinitely many arms, then $\dim \Ext^2(S_{L_1}, S_L) = \infty$, contradicting the Hom-finiteness of $\Db \AA$.

We will gather these results in following proposition.

\begin{proposition}\label{proposition:TiltingObject}
Let $\AA$ be an indecomposable abelian hereditary category which is fractionally Calabi-Yau of dimension 1 but not $1$-Calabi-Yau.  If the Auslander-Reiten quiver of $\AA$ consists of more than one tube, then $\Db \AA$ admits a tilting object.
\end{proposition}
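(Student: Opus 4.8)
The plan is to assemble the pieces established above into a tilting object. Set $\aa = \add \LL$, let $T$ be the direct sum of (a set of representatives of) the objects in $\LL$, and put $\Gamma = \End(T)$, so that $\mod \aa \cong \mod \Gamma$. I first claim that $\LL$ is a \emph{finite} set. Since $\AA$ is fractionally Calabi-Yau of dimension $1$ there is an $n$ with $\t^n \cong 1$, so every tube has rank at most $n$; in particular $s_x \le n$ for every $x \in \bX$. For a homogeneous tube one has $L^{(x)}_0 = L$ and $L^{(x)}_1 \cong L_1$, so such an $x$ contributes nothing new to $\LL$; the only new objects, the ``interior'' objects $L^{(x)}_i$ with $0 < i < s_x$, come from the finitely many nonhomogeneous simple tubes of $\HH$ (the preceding proposition, that $\HH$ has only finitely many nonhomogeneous simple tubes), and each such arm has at most $n-1$ interior objects. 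Hence $\LL$ is finite and $T$ is a genuine object of $\HH \subseteq \Db \AA$.

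Next I record the two key properties of $T$, both of which are essentially already proved. By the lemma above, $\Hom_{\Db \AA}(A, B[m]) = 0$ for all $A, B \in \LL$ and all $m \ne 0$, so $\Ext^m(T,T) = 0$ for $m \ne 0$; that is, $T$ is rigid. Moreover $T^{\perp} := \{ X \in \Db \AA \mid \RHom(T, X) = 0 \}$ is zero: identifying $\Db \AA$ with $\Db \HH$ and using that $\HH$ is hereditary, any such $X$ decomposes as $X \cong \bigoplus_i H^i(X)[-i]$ with each $H^i(X) \in \HH$, and $\RHom(T,X) = 0$ then forces $\Hom_{\HH}(T, H^i X) = \Ext^1_{\HH}(T, H^i X) = 0$ for every $i$; by the second assertion of the lemma this gives $H^i(X) = 0$ for all $i$, hence $X = 0$.

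It remains to pass from ``$T$ rigid, $T^{\perp} = 0$'' to ``$T$ is a tilting object''. By \cite[Theorem 5.1]{vanRoosmalen06} there is a fully faithful exact functor $F : \Db \mod \aa \to \Db \AA$ carrying the standard projective generator of $\mod \aa$ to $T$; its essential image is the thick subcategory generated by $\LL$. This $F$ admits a right adjoint $G = \RHom(T, -) : \Db \AA \to \Db \mod \Gamma$ — the target is correct because $\HH$ is Ext-finite, so these complexes have finite-dimensional total cohomology — and $G F \cong \id$ since $F$ is fully faithful. Given $X \in \Db \AA$, apply $G$ to the triangle $FGX \to X \to C \to FGX[1]$ determined by the counit; the triangle identities make $G$ applied to the counit an isomorphism, so $GC = 0$, i.e. $C \in T^{\perp} = 0$. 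Thus the counit is an isomorphism, $F$ is an equivalence $\Db \mod \Gamma \simeq \Db \AA$, and $T \cong F(\Gamma)$ is a tilting object.

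The step requiring the most care is the existence of the right adjoint $G = \RHom(T,-)$ and the attendant semiorthogonal decomposition $\Db \AA = \langle \ker G, \im F \rangle$; this is exactly where the finiteness of $\LL$ (so that $T$ is an honest object, not an infinite family) and the Hom- and Ext-finiteness of $\HH$ enter. Everything else is bookkeeping on top of the lemmas already established. A posteriori, $\Gamma$ is Morita equivalent to a canonical algebra — its quiver is the one displayed in Figure \ref{figure:CanonicalObject} — which foreshadows the appearance of weighted projective lines in Theorem \ref{theorem:Introduction}.
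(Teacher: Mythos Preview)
Your proof is correct and follows essentially the same route as the paper: assemble the partial tilting set $\LL$, invoke the proposition on finitely many nonhomogeneous simple tubes to get finiteness, and conclude that $T=\bigoplus\LL$ is tilting. The only difference is that you spell out explicitly the adjunction argument showing that rigidity together with $T^{\perp}=0$ forces the functor $\Db\mod\Gamma\to\Db\AA$ to be an equivalence, whereas the paper leaves this step implicit (it simply asserts that once $\LL$ is finite it is a tilting object, pointing to \cite[Theorem 5.1]{vanRoosmalen06}).
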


\subsection{Proof of classification}

\begin{theorem}\label{theorem:FractionallyCY}
Let $\AA$ be an indecomposable abelian hereditary category which is fractionally Calabi-Yau of dimension $d$, but not $1$-Calabi-Yau then $\AA$ is derived equivalent to either
\begin{enumerate}
  \item the category of finite dimensional modules $\mod Q$ over a Dynkin quiver $Q$, or
  \item the category of nilpotent representations $\nilp \tilde{A}_n$ where $\tilde{A}_n$ has cyclic orientation and $n \geq 1$, or
  \item the category of coherent sheaves $\coh \bX$ over a weighted projective line of tubular type.
\end{enumerate}
\end{theorem}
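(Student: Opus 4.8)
\smallskip\noindent\textbf{Proof strategy.}
The plan is to run a case distinction on the value of $d$. By Proposition~\ref{proposition:FractionalCYDim} we have $d \in [0,1]$, and $d$ is well defined as a rational number (if $\bS^{n}\cong[m]$ and $\bS^{n'}\cong[m']$ then $\bS^{nn'}\cong[mn']\cong[m'n]$, so $mn'=m'n$). In each of the cases $d<1$ and $d=1$ I would produce a tilting object in $\Db\AA$ (or directly identify $\AA$) and then invoke Happel's theorem~\ref{theorem:TiltingObject}; the bulk of the work has already been carried out in \S\ref{section:TwistFunctors}--\S\ref{section:Examples}, so what remains is essentially assembly.

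\emph{The case $d<1$.} Write $\bS^{n}\cong[m]$ with $m<n$, so $\t^{n}\cong[m-n]$ with $\ell:=n-m\geq 1$; in particular no indecomposable of $\Db\AA$ is $\t$-periodic, and by Theorem~\ref{theorem:TubeCriterium} the Auslander--Reiten quiver of $\Db\AA$ has no tubes. Following the orbit computation in the proof of Proposition~\ref{proposition:FractionalCYDim}, starting from $X\in\ind\AA[0]$ and applying $\t$ one drops a homological degree exactly when the current object is (a shift of) a projective of $\AA$, and since $\t^{n}X\cong X[-\ell]$ with $\ell\geq 1$ this must occur; hence $\AA$ has a nonzero, necessarily exceptional, projective object. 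From it one builds a tilting object in $\Db\AA$ by the twist-functor construction of \S\ref{subsubsection:tFractional} (which is considerably simpler here because there are no tubes to control), so by Happel's theorem $\AA$ is derived equivalent to $\mod\Lambda$ for a connected finite-dimensional hereditary algebra $\Lambda$, or to $\coh\bX$; the latter is excluded since the finite-length subcategory of $\coh\bX$ is a coproduct of tubes. Finally $\bS^{n}\cong[m]$ forces the Coxeter transformation of $\Lambda$ (the automorphism induced by $\t$ on $K_{0}(\Db\mod\Lambda)$) to have finite order, and a connected finite-dimensional hereditary algebra with periodic Coxeter transformation is of Dynkin type; this gives alternative~(1).

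\emph{The case $d=1$.} Now $\t^{n}\cong 1$, so every indecomposable $X$ satisfies $\t^{n}X\cong X$, and by Theorems~\ref{theorem:TubeCriterium} and~\ref{theorem:TubeMain}(1) every Auslander--Reiten component of $\AA$ is a generalized standard tube. If there is only one component, then $\AA$ is the additive closure of that tube, which by Proposition~\ref{proposition:SphericalInTube} is the category of nilpotent representations of an $\tilde{A}_{n}$-quiver with cyclic orientation; the one-loop quiver is excluded because it would make $\AA$ $1$-Calabi-Yau, so $n\geq 1$ and we are in alternative~(2). Otherwise $\AA$ has at least two tubes, and since $\AA$ is not $1$-Calabi-Yau, Proposition~\ref{proposition:TiltingObject} supplies a tilting object in $\Db\AA$; Happel's theorem then gives $\AA$ derived equivalent to $\mod\Lambda$ or to $\coh\bX$. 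The option $\mod\Lambda$ is impossible: $\bS^{n}\cong[n]$ again forces $\Lambda$ to be of Dynkin type with some Coxeter number $h$, but then $\bS^{h}\cong[h-2]$ (Example~\ref{example:fractionalCalabi-Yau}) together with $\bS^{n}\cong[n]$ yields $[2n]\cong\id$ on $\Db\mod\Lambda$, which is absurd. Hence $\AA$ is derived equivalent to $\coh\bX$, and since the only weighted projective lines that are fractionally Calabi-Yau are the tubular ones, $\bX$ is of tubular type, giving alternative~(3).

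The genuine obstacle in this program is not the assembly above but the input $\ref{proposition:TiltingObject}$, i.e. building a tilting object in the $d=1$, at-least-two-tubes, non-$1$-Calabi-Yau case; this is what required the twist functors of \S\ref{section:TwistFunctors}, the convexity and generalized standardness of tubes from \S\ref{section:Tubes}, and the analysis of \S\ref{subsubsection:tFractional} (choosing the exceptional object $L$ and the $1$-spherical object $S$ with the right $\Hom$/$\Ext$ pattern, forming the coherent sequence $\EE=(t^{is}L)_{i\in\bZ}$, identifying $\HH/\HH_{0}$ with $\coh\bP^{1}$, and bounding the number of exceptional tubes). Granting Proposition~\ref{proposition:TiltingObject}, the only delicate points are the exclusion of the spurious $\mod\Lambda$ alternative when $d=1$ and keeping straight which of (1)--(3) occurs for which value of $d$.
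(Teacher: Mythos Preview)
Your handling of the case $d=1$ is essentially the paper's argument, and your exclusion of the $\mod\Lambda$ alternative via the incompatibility of $\bS^{n}\cong[n]$ with $\bS^{h}\cong[h-2]$ is a valid variant of the paper's more direct observation that $\Db\mod\Lambda$ always has non-tube Auslander--Reiten components (preprojective and preinjective), whereas here every component is a tube.

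The case $d<1$, however, has a genuine gap. You propose to build a tilting object from a projective ``by the twist-functor construction of \S\ref{subsubsection:tFractional}'', but that construction is built entirely around the autoequivalence $t=T_{E}$ for a generalized $1$-spherical object $E$, and such objects live only in tubes (Proposition~\ref{proposition:SphericalInTube}). You have just argued that when $d<1$ there are \emph{no} tubes; hence there is no $E$, no $t$, and none of the apparatus of \S\ref{subsubsection:tFractional} (the sequence $\EE=(t^{is}L)_i$, the heart $\HH$, the quotient $\HH/\HH_{0}\cong\coh\bP^{1}$) ever gets started. Knowing only that $\AA$ has a nonzero projective does not by itself yield a tilting object in an Ext-finite hereditary category with Serre duality: you would still need to show there are only finitely many indecomposable projectives and that their sum generates, neither of which you address.

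The paper avoids this entirely. From $\tau^{n}\cong[m-n]$ with $m-n<0$ it observes (as you do) that for every indecomposable $X\in\AA$ some $\tau^{k}X$ with $0\le k<n$ is projective, so every indecomposable is preprojective and in particular \emph{directing}. It then invokes the classification of abelian hereditary directed categories with Serre duality from \cite{vanRoosmalen06}, which immediately gives $\AA\simeq\rep Q$ for $Q$ Dynkin. No tilting object is constructed and Happel's theorem is not used in this case; your Coxeter-periodicity argument, while correct in spirit, is not needed once one has this classification.
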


\begin{proof}
By Proposition \ref{proposition:FractionalCYDim}, we know that $d \leq 1$.  If $d \not= 1$, then $\t^n \cong [m-n]$ where $n > 0$ and $m-n < 0$.  Thus for every indecomposable $X \in \Ob \AA$, there is a $k < n$ such that $\t^k X$ is a projective object.  As such, $X$ is a preprojective object and in particular it is directing.  Such categories in which every object is directing have been classified in \cite{vanRoosmalen06}.  We see that $\AA$ is equivalent to the category of finite dimensional representations $\mod Q$ over a Dynkin quiver $Q$.

We are left with the case where $d=1$.  In this case, every Auslander-Reiten component of $\AA$ is a standard tube (see Proposition \ref{proposition:LS}).  If there is only one such tube, $\AA$ is equivalent to category of nilpotent representations $\nilp \tilde{A}_n$ where $\tilde{A}_n$ has cyclic orientation.  This category is 1-Calabi-Yau if and only if $n=0$.

In case there is more than one tube,  Proposition \ref{proposition:TiltingObject} shows that $\AA$ admits a tilting object.

Invoking Theorem \ref{theorem:TiltingObject} shows that $\AA$ is either derived equivalent to $\mod A$ for a finite dimensional algebra $A$, or to $\coh \bX$ for a weighted projective line $\bX$.  Since every Auslander-Reiten component of $\AA$ is a standard tube, we are in the latter case where $\bX$ is of tubular type.
\end{proof}

\begin{remark}
In the same spirit as Corollary \ref{corollary:CalabiYauOnObjects}, we note that we do not use $\bS^n \cong [m]$, but only $\bS^n X \cong X[m]$, for all $X \in \Ob \Db \AA$, thus that $\AA$ satisfies a fractionally Calabi-Yau property \emph{on objects} alone.  From Theorem \ref{theorem:FractionallyCY} we infer that $\AA$ is fractionally Calabi-Yau.
\end{remark}

\begin{remark}
By combining Theorem \ref{theorem:FractionallyCY} with the classification of hereditary Calabi-Yau categories in \cite{vanRoosmalen08} we obtain Theorem \ref{theorem:Introduction}.
\end{remark}
\section{Derived equivalences}

Although the classification in Theorem \ref{theorem:FractionallyCY} is up to derived equivalence, we may as in \cite{vanRoosmalen08} give a classification of all hereditary fractionally Calabi-Yau categories up to equivalence.

If $\AA$ is a hereditary fractionally Calabi-Yau category of fractional dimension less than 1, then $\AA \cong \rep Q$ for a Dynkin quiver $Q$.  Every hereditary category derived equivalent to $\AA$ is of the same form (\cite{vanRoosmalen06}).

When $\AA$ is a hereditary fractionally Calabi-Yau category of fractional dimension 1, then it suffices to give all aisles $\UU$ in $\Db \AA$ which are closed under successors.  By the fractionally Calabi-Yau property, such aisles are $\t$-invariant.

Since we are interested in bounded $t$-structures, we may assume there in an $n \in \bZ$ such that $\UU$ contains a nonzero object of $\AA[n] \subseteq \Db \AA$ but no nonzero objects of $\AA[n-1]$.  Up to shifts, we may assume $n=0$. 

Such a $\tau$-invariant aisle can be described by a split torsion theory on $\AA$. A \emph{torsion theory} on $\AA$, $(\FF,\TT)$, is a pair of full additive subcategories of $\AA$, such that $\Hom(\TT,\FF)=0$ and having the additional property that for every $X \in \Ob \AA$ there is a short exact sequence
$$0 \longrightarrow T \longrightarrow X \longrightarrow F \longrightarrow 0$$
with $F \in \FF$ and $T \in \TT$.

We will say the torsion theory $(\FF,\TT)$ is \emph{split} if $\Ext(\FF,\TT)=0$.  In case of a split torsion theory we obtain, by \emph{tilting}, a hereditary category $\HH$ derived equivalent to $\AA$ with an induced split torsion theory $(\TT,\FF[1])$.

When $\AA \cong \nilp \tilde{A}_{n}$, the only split torsion theories are those with either $\TT = 0$ or $\FF = 0$.

\begin{proposition}\label{proposition:SplitStructures}
Let $\bX$ be a weighted projective line of tubular type or an elliptic curve.  All split torsion theories may be described as follows.  Let $\theta \in \bR \cup \{\infty\}$.  Denote by $\AA_{> \theta}$ and $\AA_{\geq \theta}$ the full additive subcategory of $\AA$ generated by all indecomposables $\EE$ with $\mu(\EE) > \theta$ and $\mu(\EE) \geq \theta$, respectively.  All full exact extension-closed subcategories $\TT$ of $\AA$ with $\AA_{\geq \theta} \subseteq \TT \subseteq \AA_{> \theta} \subseteq \AA$ give rise to a torsion theory $(\FF,\TT)$, with $\FF$ the full subcategory given by all objects $F \in \AA$ such that $\Hom_\AA(\TT,F) = 0$.
\end{proposition}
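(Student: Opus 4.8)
The plan is to establish both halves of the statement: that every split torsion theory on $\AA=\coh\bX$ has the asserted shape, and that every pair $(\FF,\TT)$ built as described is indeed a split torsion theory.

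For the forward (and main) direction, let $(\FF,\TT)$ be split. The first point is that, since $\Ext(\FF,\TT)=0$, every short exact sequence $0\to T\to X\to F\to 0$ with $T\in\TT$ and $F\in\FF$ splits; hence every indecomposable object of $\AA$ lies in $\TT$ or in $\FF$, and more generally the torsion and torsion-free parts of any object are among its indecomposable direct summands. Since $\t^n\cong\id$ (fractional Calabi-Yau dimension one), Theorem \ref{theorem:TubeCriterium} places every indecomposable of $\AA$ inside a tube. Fix a tube $\KK$; by the previous remark the torsion and torsion-free parts of an object of $\KK$ lie among its summands, hence in $\KK$, and since $\KK$ is a full extension-closed abelian subcategory of $\AA$ equivalent to $\nilp\tilde{A}_m$ (cyclic orientation) by Proposition \ref{proposition:SphericalInTube}, the $\Ext$-group appearing in the splitting condition is the same computed in $\KK$ or in $\AA$. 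Thus $(\FF\cap\KK,\TT\cap\KK)$ is a split torsion theory on $\KK$, hence trivial by the classification of split torsion theories on $\nilp\tilde{A}_m$ recalled above, so $\KK\subseteq\TT$ or $\KK\subseteq\FF$. Therefore $\TT$ is a union of tubes, and in particular $\t$-invariant.

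It remains to see this union is cut out by a single $\theta\in\bR\cup\{\infty\}$. Put $\theta=\inf\{\mu(\KK)\mid\KK\text{ a tube contained in }\TT\}$. If some tube $\KK_1\subseteq\TT$ of slope $q_1$ and some tube $\KK_2\subseteq\FF$ of slope $q_2$ satisfied $q_1<q_2$, then, since $\achi(X_1,X_2)=\rk X_1\deg X_2-\deg X_1\rk X_2>0$ for indecomposables $X_i\in\KK_i$, there would be a $j$ with $\Hom(\t^j X_1,X_2)\neq 0$; as $\t^j X_1\in\TT$ and $X_2\in\FF$ this contradicts $\Hom(\TT,\FF)=0$. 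Consequently every tube of slope $>\theta$ lies in $\TT$ and every tube of slope $<\theta$ lies in $\FF$, which says exactly $\AA_{>\theta}\subseteq\TT\subseteq\AA_{\geq\theta}$; the only remaining freedom is which of the tubes of slope exactly $\theta$ belong to $\TT$, and any union of them is permitted, matching the description in the statement. Finally $\FF=\{F\in\AA\mid\Hom(\TT,F)=0\}$: one inclusion is part of the definition of a torsion theory, and for the reverse an indecomposable $F\notin\TT$ lies in a tube of slope $\leq\theta$, whence $\Hom(\TT,F)=0$ by the same slope comparison together with the fact that $\AA^{(\theta)}$ is a coproduct of tubes (no maps between distinct tubes of equal slope).

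For the converse, given $\theta$ and such a $\TT$ — concretely, the subcategory of objects all of whose Harder--Narasimhan factors are semistable of slope $>\theta$ or lie in the chosen tubes of slope $\theta$ — closure of $\TT$ under quotients and extensions is a routine slope computation, so $(\{F:\Hom(\TT,F)=0\},\TT)$ is a torsion theory on the noetherian category $\AA$; and it is split because $\TT$ is $\t$-invariant by construction, hence so is $\FF=\TT^{\perp}$, and then $\Ext^1(F,T)\cong\Hom(T,\t F)^{*}=0$ for $F\in\FF$, $T\in\TT$ by Serre duality. I expect the one genuine subtlety to sit in the second paragraph: a tube of finite slope is \emph{not} closed under subobjects inside $\AA$, so the reduction to $\nilp\tilde{A}_m$ cannot be made by a Serre-subcategory argument and must instead be routed through splitness, which is precisely what makes every object of the tube the direct sum of a torsion summand and a torsion-free summand; once that step is in place, the formula for $\achi$ does the rest.
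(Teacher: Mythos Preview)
Your argument is correct and follows the same slope-based approach the paper has in mind, but the paper's own proof is essentially a one-line appeal to the structure of $\coh\bX$ (``if an indecomposable lies in $\FF$ then so do all indecomposables of larger slope'') together with citations to \cite{Burban06,vanRoosmalen08} for the elliptic curve case; you have supplied the details the paper omits.

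Two remarks on the comparison. First, your tube-by-tube reduction---restricting a split torsion pair to a single tube $\KK$ and invoking the triviality of split torsion theories on $\nilp\tilde{A}_m$---is not made explicit in the paper, though it is implicit in the Remark following the proposition that $\TT$ is determined by a subset of tubes of slope $\theta$. Your handling of this step is clean: you correctly note that $\KK$ is extension-closed in $\AA$ (as a connected component of the exact abelian subcategory $\HH^{(q)}$ of semistables of slope $q$), so $\Ext^1_\KK$ embeds in $\Ext^1_\AA$ and the split condition descends. Second, your converse direction proves slightly more than the proposition literally asserts: the statement only claims that such a $\TT$ yields a torsion theory, whereas you also verify splitness via $\tau$-invariance and Serre duality. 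This is the right thing to do, since the proposition is meant to describe exactly the split torsion theories, and indeed splitness forces the slope-$\theta$ part of $\TT$ to be a union of tubes (an arbitrary extension-closed subcategory between $\AA_{>\theta}$ and $\AA_{\geq\theta}$ would not in general give a split pair). Your closing diagnosis of where the subtlety lies---that finite-slope tubes are not Serre subcategories, so one must route the reduction through splitness rather than a subobject argument---is exactly right.
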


\begin{proof}
It follows from the description of $\coh \bX$ given in \S\ref{section:Examples} that if an indecomposable object $X \in \Ob \coh \bX$ lies in $\FF$, that all indecomposable $Y$ with $\mu Y > \mu X$ also lies in $\FF$.  The case of an elliptic curve can be found in \cite{Burban06, vanRoosmalen08}.
\end{proof}

\begin{remark}
The category $\TT$ from Proposition \ref{proposition:SplitStructures} is thus completely determined by a subset of tubes with objects of slope $\theta$.
\end{remark}

\begin{example}
We give some examples of torsion theories.  In here $\HH$ always stands for the category tilted with respect to the described torsion theory.  We will assume $\AA = \coh \bX$ where $\bX$ is a weighted projective line of tubular weight; similar torsion theories for the category of coherent sheaves on an elliptic curve have been considered in \cite[Example 4.9]{vanRoosmalen08}.
\begin{enumerate}
\item If $\theta \in \bQ \cup \{\infty\}$ and $\TT = \AA_{> \theta}$, then it follows from \cite[Proposition 10.8]{Lenzing07} that the tilted category $\HH$ is equivalent to $\coh \bX$.
\item If $\theta \in \bR \setminus \bQ$ and $\TT = \AA_{> \theta} = \AA_{\geq \theta}$ then $\HH$ does not have noetherian objects.  Indeed, one can use either a proof similar to \cite[Proposition 3.1]{Polishchuk04b} or use Proposition \ref{proposition:SimplesInTubes} to see that $\HH$ does not have any simple objects (since every noetherian object must map nonzero to a simple object, this is a contradiction).
\item If $\theta \in \bQ \cup \{\infty\}$ and $\TT = \AA_{\geq \theta}$, then $\HH$ is dual to $\AA$.  One can see this by considering a tilting object in $\HH$.
\end{enumerate}
\end{example}

In the following theorem, we will use an explicit description of the category $\coh \bX$ where $\bX$ is a weighted projective line of tubular type or an elliptic curve.  For a weighted projective line, the description can be found in \S\ref{subsection:WeightedProjectivesLines}.  For an elliptic curve, we refer to \cite{Atiyah57} or for a more recent account to \cite{BruningBurban07}.

\begin{theorem}\label{theorem:Equivalence}
Let $\AA$ be a connected hereditary fractionally Calabi-Yau category.  Then $\AA$ is equivalent to either
\begin{itemize}
\item the category of finite dimensional representations of a Dynkin quiver, or
\item the category of finite dimensional nilpotent representations of an $\tilde{A}_n$-quiver with cyclic orientation, or
\item a category obtained from tilting $\coh \bX$ with a split torsion theory described in Proposition \ref{proposition:SplitStructures} where $\bX$ is a weighted projective line of tubular type or an elliptic curve.
\end{itemize}
\end{theorem}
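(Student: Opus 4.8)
The plan is to reduce Theorem \ref{theorem:Equivalence} to the derived-equivalence classification already obtained in Theorem \ref{theorem:FractionallyCY} (together with the $1$-Calabi-Yau classification of \cite{vanRoosmalen08}), and then describe, within each derived-equivalence class, all the hereditary categories. So the first step is to observe that $\AA$ being connected (i.e. indecomposable) and fractionally Calabi-Yau forces $\Db \AA$ to be equivalent to $\Db \rep Q$ for a Dynkin quiver, to $\Db \nilp \tilde A_n$, or to $\Db \coh \bX$ for $\bX$ tubular or an elliptic curve. In the first case every hereditary category in the derived-equivalence class is again $\rep Q'$ for a Dynkin quiver, as recorded above citing \cite{vanRoosmalen06}; in the $\nilp \tilde A_n$ case the only split torsion theories are the trivial ones, so no new categories arise. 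Thus the content of the theorem is concentrated in the tubular/elliptic case.

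For that case, the key structural input is that a hereditary category $\HH$ is derived equivalent to $\AA = \coh \bX$ (with $\bX$ tubular or elliptic) if and only if $\HH$ is obtained from $\AA$ by tilting at a split torsion theory on $\AA$. One direction is the general fact (used already several times in the paper, e.g. via Theorem \ref{theorem:Split} and Proposition \ref{proposition:TauInvariant}) that tilting at a torsion theory $(\FF,\TT)$ yields a derived-equivalent heart, and that if the torsion theory is split then this heart is again hereditary. For the converse, given $\HH$ hereditary and derived equivalent to $\AA$, the fractionally Calabi-Yau property makes the standard aisle of $\HH$ inside $\Db \AA$ a $\t$-invariant aisle closed under successors; after a shift we may assume it meets $\AA[0]$ but not $\AA[-1]$, and then $\TT = \UU \cap \AA[0]$ and $\FF[1] = \UU^\perp \cap \AA[1]$ define a torsion theory on $\AA$ which is split because $\UU$ induces a split $t$-structure (this is exactly the argument of Proposition \ref{proposition:TauInvariant}, $\Hom(V, U[1]) \cong \Hom(\t^{-1}U, V)^* = 0$). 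So the task becomes: enumerate all split torsion theories on $\coh \bX$.

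The enumeration of split torsion theories is precisely Proposition \ref{proposition:SplitStructures}: using the slope function on $\coh \bX$ (available since $\bX$ is tubular, where every indecomposable is semistable, or via the analogous picture on an elliptic curve from \cite{Atiyah57, BruningBurban07}), a torsion class $\TT$ closed under quotients and extensions with $\Hom(\TT,\FF) = 0$ and $\Ext(\FF,\TT) = 0$ must, by semistability and the inequality $\mu X \le \mu Y$ for nonzero maps $X \to Y$ between semistables, be ``an interval of slopes'': there is $\theta \in \bR \cup \{\infty\}$ with $\AA_{\ge \theta} \subseteq \TT \subseteq \AA_{> \theta}$, and the only freedom is which tubes of slope exactly $\theta$ to include (those of slope $>\theta$ are forced in, those of slope $<\theta$ are forced out, and membership of a slope-$\theta$ tube is an all-or-nothing choice because a tube is an extension-closed abelian subcategory). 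Checking splitness of such a $\TT$ then reduces to the vanishing $\Ext(\FF, \AA_{>\theta}) = 0$, which follows from Serre duality ($\Ext(F,Y) \cong \Hom(Y, \t F)^*$) and $\t$-invariance of the slope. I would then assemble the three bullet points: Dynkin (no new categories), $\nilp \tilde A_n$ (no new categories), and the tubular/elliptic class described exactly by the split torsion theories of Proposition \ref{proposition:SplitStructures}.

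The main obstacle I anticipate is the converse direction in the tubular/elliptic step, namely verifying carefully that \emph{every} hereditary heart $\HH \subseteq \Db \AA$ actually arises as a tilt at a \emph{split} torsion theory rather than at a more general $t$-structure — i.e. that after the shift normalization the aisle $\UU$ really is of the form ``closed under successors, meeting $\AA[0]$ and $\AA[-1]$ in complementary torsion/torsionfree pieces.'' This uses that in a fractionally Calabi-Yau hereditary category with no projectives all $t$-structures are $\t$-invariant and that $\t$-invariant aisles split (Proposition \ref{proposition:TauInvariant}), plus an argument that a hereditary heart can only ``spread over'' two consecutive shifts $\AA[0], \AA[-1]$; this last point is exactly where one invokes that $\AA$ is hereditary and the slope/AR-component combinatorics of the previous sections. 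Everything else is either cited or a routine consequence of the slope formalism already set up in \S\ref{section:Examples}.
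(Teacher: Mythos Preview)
Your proposal is correct and follows essentially the same route as the paper: reduce to the tubular/elliptic case via Theorem~\ref{theorem:FractionallyCY} and \cite{vanRoosmalen08}, then show that any hereditary heart in $\Db\coh\bX$ is the tilt at a split torsion theory, and invoke Proposition~\ref{proposition:SplitStructures}. The paper obtains splitness of the $t$-structure simply from the heart being hereditary (rather than via $\tau$-invariance as you do, though both work), and handles the ``two-degree concentration'' you flag as the main obstacle exactly as you anticipate: it uses the description of morphisms in $\coh\bX$ (paths between tubes of different slopes, plus $\Ext^1$ within tubes) to show that once a single indecomposable $X[n]$ lies in $\DD^{\leq 0}$ one has $\HH[n+1]\subset\DD^{\leq 0}$, and dually $\HH[m-2]\subset\DD^{\geq 0}$, so the $t$-structure is determined by $\TT=\HH[m-1]\cap\DD^{\leq 0}$. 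Your sketch of Proposition~\ref{proposition:SplitStructures} itself goes beyond what the paper's proof of Theorem~\ref{theorem:Equivalence} does, since that proposition is treated separately.
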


\begin{proof}
We only need to consider the last case, namely where $\AA$ is derived equivalent to $\coh \bX$ where $\bX$ is a weighted projective line of tubular type or an elliptic curve, thus $\AA$ is the heart of a split $t$-structure $(\DD^{\leq 0},\DD^{\geq 0})$ on $\Db \coh \bX$.

Let $X \in \ind \Db \coh \bX$.  Since the $t$-structure is bounded, there is an $n \gg 0$ so that $X[n] \in \DD^{\leq 0}$ and $X[-n] \in \DD^{\geq 0}$.  In particular, there is an $n \in \bZ$ so that $X[n] \in \DD^{\leq 0}$ but $X[n-1] \not\in \DD^{\leq 0}$.

It follows from the description of morphisms in $\coh \bX$ that $\HH[n+1] \subset \DD^{\leq 0}$.  Let $m \in \bZ$ be the smallest integer such that $\HH[m] \subset \DD^{\leq 0}$, so there is an $Y \in \ind \HH[m-1]$ with $Y \not\in \DD^{\leq 0}$, and because the $t$-structure is split, we have $Y \in \DD^{\geq 0}$.  Again using the description of the morphisms we have in $\coh \bX$, we know that $\HH[m-2] \subset \DD^{\geq 0}$.  The split $t$-structure is thus uniquely determined by $\TT = \HH[m-1] \cap \DD^{\leq 0}$, which defines a split torsion theory on $\HH$.
\end{proof}

\def\cprime{$'$}
\providecommand{\bysame}{\leavevmode\hbox to3em{\hrulefill}\thinspace}
\providecommand{\MR}{\relax\ifhmode\unskip\space\fi MR }
\providecommand{\MRhref}[2]{%
  \href{http://www.ams.org/mathscinet-getitem?mr=#1}{#2}
}
\providecommand{\href}[2]{#2}

\end{document}